\DeclareMathOperator*{\argmin}{arg\,min}
\DeclareMathOperator{\Sch}{Sch}
\newcommand{\R}{\mathbb R}
\newcommand{\N}{\mathbb N}
\newcommand{\DD}{\mathcal D}
\newcommand{\FF}{\mathcal F}
\newcommand{\cg}{\langle}
\newcommand{\cd}{\rangle}
\newcommand{\1}{\mathds 1}
\newcommand{\pf}{{}_\#}
\newcommand{\eps}{\varepsilon}
\renewcommand{\P}{\mathcal P}
\newcommand{\M}{\mathsf M}
\DeclareRobustCommand*{\bfseries}{
  \not@math@alphabet\bfseries\mathbf
  \fontseries\bfdefault\selectfont
  \boldmath
}
\theoremstyle{plain}
\newtheorem{theo}{Theorem}
\newtheorem{conj}[theo]{Assumption}
\newtheorem{prop}[theo]{Proposition}
\newtheorem{lem}[theo]{Lemma}
\newtheorem{cor}[theo]{Corollary}
\theoremstyle{definition}
\newtheorem{defn}[theo]{Definition}
\theoremstyle{remark}
\newtheorem{rem}[theo]{Remark}
\tikzset{
    rndproc/.style={circle,draw, thick}
}
\tikzset{
    hidden/.style={rndproc, fill=white!20!gray}
}
\author{Aymeric Baradat, Elias Ventre}
\begin{document}

\begin{center}
\noindent \uppercase{\textbf{Convergence of the Sinkhorn algorithm when the Schr\"odinger problem has no solution}}
\end{center}
    \vspace{.5cm}

\noindent \textbf{Authors}: Aymeric Baradat$^{1}$, Elias Ventre$^{2}$.\\
1 - Université  Claude Bernard Lyon 1, CNRS UMR 5208, Institut Camille Jordan, Villeurbanne, France.\\
2 - University of British Columbia, Vancouver, BC, Canada.\\
\textbf{Corresponding author}: aymeric.baradat@cnrs.fr.

\begin{abstract}
The Sinkhorn algorithm is the most popular method for solving the entropy minimization problem called the Schr\"odinger problem: in the non-degenerate cases, the latter admits a unique solution towards which the algorithm converges linearly. Here, motivated by recent applications of the Schr\"odinger problem with respect to structured stochastic processes (such as increasing ones), we study the Sinkhorn algorithm in degenerate cases where it might happen that no solution exist at all. We show that in this case, the algorithm ultimately alternates between two limit points. Moreover, these limit points can be used to compute the solution of a relaxed version of the Schr\"odinger problem, which appears as the $\Gamma$-limit of a problem where the marginal constraints are replaced by asymptotically large marginal penalizations, exactly in the spirit of the so-called unbalanced optimal transport. Finally, our work focuses on the support of the solution of the relaxed problem, giving its typical shape and designing a procedure to compute it quickly. We showcase promising numerical applications related to a model used in cell biology.
\end{abstract}

\vspace{.5cm}
\noindent \textbf{Keywords}: Schr\"odinger problem, the Sinkhorn algorithm, matrix scaling\par

\bigskip

\noindent \textbf{MSC code}: 65F35, 65B99, 92-08 


\newpage

\section{Introduction}

The Schr\"odinger problem has been introduced by Schr\"odinger himself in the 30's~\cite{Schrodinger1931,Schrodinger1932} in the context of statistical mechanics. It is one of these problems in mathematics for which there is periodically a resurgence of interest, as witnessed by the numerous works which it was the object of for almost 100 years, among which~\cite{fortet1940resolution,Csiszar1975,zambrini1986variational,follmer1988random,ruschendorf1993note}. The last of these resurgences, over the past twenty years, occurred because of its close links with optimal transport. On the one hand, the Schr\"odinger problem, that comes with a temperature parameter in its classical formulation, converges towards optimal transport when the temperature goes to zero~\cite{mikami2004monge,Leonard2012,leonard2013survey}. On the other hand, it is often much easier to compute the solutions of the Schr\"odinger problem than the ones of the optimal transport problem~\cite{Cuturi2013,peyre2019computational}, thanks to the so-called Sinkhorn algorithm~\cite{Sinkhorn1964}. This algorithm converges exponentially fast (\emph{i.e.} at a linear rate, following the usual terminology in the field), at least when it is applied to a reference matrix whose entries are all below bounded by a positive number. 

It is known in the theory of matrix scaling that when the reference matrix has nonnegative but possibly cancelling entries, the data in the Schr\"odinger problem may be chosen in such a way that the latter admits no solution. This is the so-called \emph{non-scalable} case. Also, when the data are located at the boundary of those for which there is a solution, the so-called \emph{approximately scalable} case, the Schr\"odinger problem has a solution but the convergence of the Sinkhorn algorithm is not linear anymore.

In this paper, we want to study the Sinkhorn algorithm in the degenerate case where the Schr\"odinger problem has no solution. Our main finding is that for such problem, the Sinkhorn algorithm leads to exactly two limit points, each of them being the solution of a Schr\"odinger problem with modified data, that we characterize themselves as solutions of auxiliary optimization problems. Also, we show that these limit points are related to a problem where the marginal constraints of the original problem are replaced by marginal penalizations. Moreover, the Schr\"odinger problem related to the modified data is seen to belong to the approximately scalable case in general. We therefore provide a new outlook on the question of the support of the solution in this case, allowing to design an approximate method for improving the Sinkhorn algorithm's convergence both in the approximately scalable and non-scalable cases.

For simplicity and because it fits with the context of our numerical explorations and needs, we decided to work in finite spaces, even though some of the results might be generalizable.

\paragraph*{The Schr\"odinger problem in finite spaces} Let $\DD = \{ x_1, \dots, x_N \}$ and $\FF = \{y_1, \dots, y_M \}$ be two nonempty finite spaces and $R \in \mathcal M_+(\DD \times \FF)$ be a nonnegative measure on $\DD \times \FF$. Of course, we can identify $R$ with a matrix $R = (R_{ij}) \in \R_+^{N \times M}$ by setting $R_{ij} := R(\{(x_i,y_j)\})$. Assuming that $R$ models the coupling between the initial and final positions of the particles of a large system, we interpret $R_{ij}$ as the sum of the masses of all the particles being in $x_i$ at the initial time, and in $y_j$ at the final time.

Let us choose $\mu \in \mathcal M_+(\DD)$ and $\nu \in \mathcal M_+(\FF)$. Once again, we see $\mu = (\mu_i)$ and $\nu = (\nu_j)$ as vectors of $\R_+^N$ and $\R_+^M$ respectively. 

We call $\Pi(\mu, \nu)$ the subset of $\mathcal M_+(\DD \times \FF)$ consisting of all those matrices $\bar R$ whose row and column sums give $\mu$ and $\nu$ respectively, that is, such that

\begin{equation*}
\forall i = 1, \dots, N, \quad \sum_j \bar R_{ij} = \mu_i, \qquad \mbox{and} \qquad \forall j = 1, \dots M, \quad \sum_i \bar R_{ij} = \nu_j.
\end{equation*}

In our interpretation, it means that for the system described by $\bar R$, the sum of the masses of all the particles being in $x_i \in \DD$ at the initial time is $\mu_i$, and the sum of the masses of all the particles being in $y_j \in \FF$ at the final time is $\nu_j$. In particular, for $\Pi(\mu,\nu)$ to be nonempty, $\mu$ and $\nu$ need to share their total mass.

\begin{rem}
\label{rem:marginals}
Let us point out to the readers acquainted with the notations used in the Optimal Transport literature that calling $X : \DD \times \FF \to \DD$ and $Y: \DD \times \FF \to \FF$ the canonical projections and denoting by $\pf$ the push forward operation on measures, the measure $\bar R$ belongs to $\Pi(\mu,\nu)$ provided $X \pf \bar R = \mu$ and $Y \pf \bar R = \nu$. Actually, we will not use these notations, and prefer to define $\mu^{\bar R} := X \pf \bar R$ and $\nu^{\bar R} := Y \pf \bar R$, see formula~\ref{eq:notation_marginals}.
\end{rem}

We call the Schr\"odinger problem w.r.t.\ $R$ between $\mu$ and $\nu$ the convex optimization problem consisting in minimizing among $\Pi(\mu, \nu)$ the relative entropy w.r.t\ $R$:
\begin{equation*}
\Sch(R; \mu, \nu) := \min \Big\{ H(\bar R|R)  \, \Big | \, \bar R \in \Pi(\mu, \nu) \Big\},
\end{equation*}
where for all $\bar R \in \mathcal M_+(\DD \times \FF)$, the relative entropy of $P$ w.r.t.\ $R$ is defined by
\begin{equation*} 
H(\bar R|R) := \sum_{ij} \Big\{ \bar R_{ij} \log \frac{\bar R_{ij}}{R_{ij}} +  R_{ij} - \bar R_{ij}\Big\},
\end{equation*}
taking the conventions $a \log \frac{a}{0} = +\infty$ if $a>0$, and $0 \log 0 = 0 \log \frac{0}{0} = 0$. Notice that if $H(\bar R|R) < +\infty$, then for all $i,j$ such that $R_{ij} = 0$, we also have $\bar R_{ij} = 0$, \emph{i.e.}, $\bar R \ll R$ in the sense of measures.

\begin{rem}
\label{rem:coupling_total_mass}
 Of course, as before, for a solution $R^*$ to exist, $\mu$ and $\nu$ need to have the same total mass, and $R^*$ will then have the same total mass as $\mu$ and $\nu$. 
\end{rem}

By strict convexity of the relative entropy as a function of $\bar R$, when there is a solution, the latter is unique. Also, the relative entropy being lower semicontinuous w.r.t.\ $\bar R$\ and $\Pi(\mu,\nu)$ being compact, the existence of a solution $R^*$ for $\Sch(R; \mu,\nu)$ is equivalent to the existence of a $\bar R \in \Pi(\mu,\nu)$ satisfying $H(\bar R|R) < + \infty$. In what follows, such an $\bar R$ is called a competitor for $\Sch(R; \mu,\nu)$.

\bigskip

Heuristically, we seek for the measure $ R^*$ that is the closest possible to $R$ in the entropic sense while imposing its first and second marginals. 

In virtue of the Sanov theorem~\cite{sanov1958probability}, this problem has an interpretation in terms of large deviations. It is also known to be connected to optimal transport problems, see~\cite{Leonard2012,leonard2013survey,Gentil2017,carlier2017convergence}: if for all $i,j$, $c_{ij}$ models the cost to transport a unit of mass from $x_i$ to $y_j$, and $R_{ij} \propto \exp(-c_{ij}/ \eps)$ for some small $\eps>0$, then the solution of $\Sch(R; \mu,\nu)$ is a good approximation of a solution of the optimal transport problem between $\mu$ and $\nu$, of cost $(c_{ij})$.

\paragraph*{The Sinkhorn algorithm}
When the solution of $\Sch(R; \mu,\nu)$ exists, it is well known for a very long time that this solution turns out to be the limit of the sequences $(P^n)_{n \in \N^*}$ and $(Q^n)_{n \in \N^*}$ appearing in the following so-called the Sinkhorn algorithm, also called IPFP for \emph{iterative proportional fitting procedure}~\cite{Sinkhorn1964,sinkhorn1967diagonal,idel2016review,Nutz2021}:
\begin{equation}
\label{eq:sinkhorn}
\left\{
\begin{gathered}
Q^0 := R,\\
\begin{aligned}
\forall n\geq 0,\quad P^{n+1} &:= \argmin\Big\{ H(P | Q^n),  &&X \pf P = \mu \Big\},\\
\forall n\geq 0,\quad Q^{n+1} &:= \argmin\Big\{ H(Q | P^{n+1}), &&Y \pf Q = \nu \Big\}.
\end{aligned}
\end{gathered}
\right.
\end{equation}
This formulation is implicit as it involves minimization problems. In fact, easy results concerning these problems, detailed in Corollary~\ref{cor:one_step_sinkhorn} below, give access to an explicit and easily computable version, which takes the following form, when expressed in terms of the so called \emph{dual variables} or \emph{potentials} $(a^n)_{n \in \N^*} \in (\R_+^N)^\N$ and $(b^n)_{n \in \N}\in (\R_+^M)^\N$:
\begin{equation}
\label{eq:computable Sinkhorn}
\left\{
\begin{gathered}
\forall j, \quad  b^0_j := 1,\\[10pt]
\begin{aligned}
&\forall n\geq 0, \quad \forall i,j,& a^{n+1}_i &:= \frac{\mu_i}{\displaystyle \sum_{j'} b^n_{j'} R_{ij'}},& P^{n+1}_{ij} &:= a^{n+1}_i b^n_j R_{ij},\\
&\forall n\geq 0,\quad \forall i,j, & b^{n+1}_j &:= \frac{\nu_j}{\displaystyle\sum_{i'} a^{n+1}_{i'} R_{i'j}},& Q_{ij}^{n+1} &:= a^{n+1}_i  b^{n+1}_j R_{ij}.
\end{aligned}
\end{gathered}
\right.
\end{equation}

A reason for the popularity of this algorithm is that in a lot of contexts, the sequences of potentials $(a^n)$ and $(b^n)$, and hence the sequence of couplings $(P^n)$ and $(Q^n)$ converge at a linear rate, and the limit of $(P^n)$ and $(Q^n)$ coincide with the unique solution of $\Sch(R; \mu,\nu)$. For this reason, the Sinkhorn algorithm is nowadays the most efficient way to compute approximate solutions of optimal transport problems~\cite{Cuturi2013,benamou2015iterative,peyre2019computational}.

Observe that \emph{a priori}, the existence of a solution for the Schr\"odinger problem is not necessary to give a meaning to the Sinkhorn algorithm. Actually, we will see that there are lots of situations where the Schr\"odinger problem has no solution, and yet the Sinkhorn algorithm is perfectly well defined. These are the cases that we want to study in this text.

\paragraph*{A degenerate case}

As we just said, our aim is to study the Sinkhorn algorithm in the cases where the existence of a solution of the Schr\"odinger problem is either false, or at least nontrivial. This includes the case where $\mu$ and $\nu$ do not have the same total mass, see Remark~\ref{rem:coupling_total_mass}. However, this is not the main new situation that we want to encompass, since the Sinkhorn algorithm behaves trivially under normalization. More interestingly, we will give a detailed study of the case where some entries of $R$ cancel, or in optimal transport terms, when the cost function takes the value $+\infty$. 

In that situation, it can be hard to exhibit a competitor, since the natural candidate that is usually chosen, namely, the product measure of $\mu$ and $\nu$, is not absolutely continuous w.r.t.\ $R$ in general. In fact, there are cases where it is easy to see that no competitor exists. We give in Appendix~\ref{app:no_solution} an explicit and simple example of such a case. To illustrate our findings, we also describe the behaviour of the Sinkhorn algorithm applied to this example.\\

Note that beyond the theoretical interest, there are practical motivations for studying cases where the problem has no solution. Indeed, the Schr\"odinger problem can be used as follows. Suppose that $\mu$ and $\nu$ are some observed densities of a random phenomenon at two different timepoints, obtained for instance by building the empirical distributions associated to some collected data. Suppose also that we have at our disposal a good model for this phenomenon, that is, a reference stochastic process chosen based on our knowledge of the system prior to the observations of $\mu$ and $\nu$. Let us call $R$ the coupling of this process between the two studied timepoints. If we believe enough in our model and in our data, but still the marginals of $R$ are not $\mu$ and $\nu$, then it is reasonable to try to improve our model by looking for the coupling that is the closest to $R$ (for instance in the entropic sense), but which is compatible with the data: this means solving $\Sch(R; \mu,\nu)$.

Now imagine that there is no lower-bound for the coupling $R$, which can be perfectly justified (think for instance of a nondecreasing process, like the size of some randomly growing phenomenon). Then, small measurement errors due to imprecision of the devices or even to too restricted samplings may result in the non-existence of any coupling with marginals $\mu$ and $\nu$ being absolutely continuous with respect to $R$: the Schr\"odinger problem would thus have no solution. In that case, we would like to be able to find a coupling which explains the best the data while being entropically close to $R$. Some methods are available for doing so, like for instance algorithms solving the so-called unbalanced problem~\cite{Chizat2018}, but at the cost of introducing a new parameter quantifying the balance between the proximity to the data and to the reference coupling, whose value will often be arbitrarily chosen. We show in this article that interestingly, the Sinkhorn algorithm allows to overcome this choice in the specific situation where the data are more trustworthy than the model.

\bigskip

In particular, we were motivated by an application of the Sinkhorn algorithm related to systems biology, and more specifically to the treatment of single-cell data. The quick progresses of acquisition methods for such data raises the hope of a better understanding of the cell-differentiation process, which would in turn pave the way for major medical breakthroughs. In the seminal papers~\cite{Schiebinger2019,Lavenant2021}, Schiebinger and his coauthors suggest to analyse the collected data through an approach based on optimal transport and more specifically on the Schr\"odinger problem.

In this field, the unknown is the law of the evolution of the quantity of mRNA molecules in the cells through time: this evolution cannot be followed, as our techniques of measurement destroy the cells. Hence, to study it between two timepoints, the approach consists in:
\begin{enumerate}[label=(\roman*)]
    \item choosing a reference theoretical model $R$, where for all $i,j$, $R_{ij}$ is the expected quantity of cells whose mRNA levels are given by the vector $x_i$ at the initial time, and by $y_j$ at the final time;
    \item measuring the mRNA levels of samples of cells at the initial and final times to get approximate distributions $\mu$ and $\nu$ of these levels among the population of cells under study;
    \item solving the Schr\"odinger problem $\Sch(R; \mu,\nu)$ to get a law $R^*$ that is close to our model $R$, but which explains the data.
\end{enumerate}

In the case of Schiebinger, $R$ is the coupling produced by a Brownian motion between two time points, and therefore admits a below bound. In a separated work~\cite{Ventre2022thesis}, the second author argues that a more realistic model would be obtained by replacing the Brownian motion by a \emph{piecewise deterministic Markov process} as described in~\cite{Herbach2017}. For such models, dynamical constraints involving mRNAs half-life times lead to a degenerate $R$ and the corresponding Schr\"odinger problem could thus have no solution, not because of a lack in the model, but because of inaccuracies in the measurements. Our results show that the Sinkhorn algorithm can still be used in this situation, without any pre-treatment of the data. We refer once again to Appendix~\ref{app:no_solution} for a further discussion on this topic.

\paragraph*{Contributions}

In this article, we work with a potentially degenerate $R$, and our main contributions are the following. 
\begin{itemize}
	\item We show that the two sequences $(P^n)_{n \in \N^*}$ and $(Q^n)_{n \in \N^*}$ defined in~\eqref{eq:sinkhorn} converge towards two possibly different matrices $P^*$ and $Q^*$, each of them being the solution of a Schr\"odinger problem with modified marginals. More precisely, the matrice $P^*$ is the solution of the problem $\Sch(R;\mu, \nu^*)$, where $\nu^*$ minimizes the relative entropy w.r.t.\ $\nu$ within the set of marginals $\bar \nu$ for which the Schr\"odinger problem $\Sch(R; \mu, \bar \nu)$ admits a solution, and a similar statement holds for $Q^*$. This result, stated at Theorem~\ref{thm:convergence_sinkhorn}, is the main result of Section~\ref{sec:convergence_sinkhorn}.
	\item We show in Section~\ref{sec:gamma_cv} that the Sinkhorn algorithm enables to compute the solution of a modified Schr\"odinger problem where the marginal \emph{constraints} are replaced by marginal \emph{penalizations}: as shown at Theorem~\ref{thm:link_P*_Q*_R*}, the limit of the solution of the problem
	\begin{equation*}
	 \min \Big\{ H(\bar R |R) + \lambda \big( H(\mu^{\bar R }| \mu) + H(\nu^{\bar R}| \nu) \big)  \, \Big | \, \bar R \in \mathcal M_+(\DD \times \FF) \Big\},
	 \end{equation*}
	 (where once again, $\mu^{\bar R}$ and $\nu^{\bar R}$ are the first and second marginal of $\bar R$, see Remark~\ref{rem:marginals}) converges towards the componentwise geometric mean of the two limits $P^*$ and $Q^*$ of the Sinkhorn algorithm as $\lambda \to + \infty$. 
	\item In Section~\ref{sec:existence_of_a_solution}, we recall a well known necessary and sufficient condition on $R$, $\mu$ and $\nu$ for $\Sch(R; \mu, \nu)$ to admit a solution. Using this condition, we develop at Proposition~\ref{prop:find_S} a procedure to find the (common) support $\mathcal S$ of $P^*$ and $Q^*$ without computing them. As explained in Subsection~\ref{subsec:procedure}, our motivation is that the convergence rate of the Sinkhorn algorithm is linear if and only if $\mathcal S$ coincides with the support of $R$. When it is not the case, as often when $\Sch(R; \mu,\nu)$ has no solution, we can therefore improve the speed of convergence by first computing $\mathcal S$, and then by applying the Sinkhorn algorithm to $\Sch(\1_{\mathcal S} R; \mu, \nu)$ instead of $\Sch(R; \mu,\nu)$, which does not change the limits $P^*$ and $Q^*$.
	\item Section~\ref{sec_numerical} is an application of the developments made at Section~\ref{sec:existence_of_a_solution}. We implement an approximate but fast algorithm, usable in practice, allowing to recover an estimate of the support $\mathcal S$. We then compare the Sinkhorn algorithm and the technique coming from~\cite{Chizat2018} with our method consisting in first computing $\mathcal S$ with our approximate algorithm and then applying the Sinkhorn algorithm to $\Sch(\1_{\mathcal S} R; \mu, \nu)$. We also detail the regimes in which our method is a significant improvement of the other techniques.
\end{itemize}

Some of the results of this paper can be generalized by replacing $\DD$ and $\FF$ by general Polish spaces without much effort. This is the reason why we will often write $H(P|R) < + \infty$ instead of $P \ll R$: these are equivalent in the finite case, but not in the continuous one. In the latter case, we often need the stronger entropic assumption. Even if we decided to stick to the finite case in order to stress the key arguments that make everything work in practice, we believe that the continuous case is also interesting, and we wish to study it in a further work.

\bigskip

Before coming up with our contributions, we recall a few facts about the relative entropy functional at Section~\ref{sec:preliminaries}.

\section{Notations, properties of the entropy and terminology}
\label{sec:preliminaries}
In this preliminary section, we introduce some notations, provide well known elementary results concerning the entropy, and recall the terminology usually used in the theory of matrix scaling.

\subsection{Notations} 
\label{subsec:notations}
Let us first give a few notations that will be used systematically in this work. Most of them were already given in the introduction. 
\begin{itemize}
\item Whenever $I$ is a finite set of labels and $\mathcal E = \{u_k, \ k \in I \}$ is a finite set indexed by $I$, we denote by $\mathcal M_+(\mathcal E)$ the set of nonnegative measures on $\mathcal E$. This set is identified with $\R_+^I$ through the the correspondence 
\begin{equation*}
\mathsf r \in \mathcal M_+(\mathcal E) \leftrightsquigarrow (\mathsf r_k := \mathsf r(\{ u_k \}))_{k \in I} \in \R^I.
\end{equation*}
For all $\mathsf r \in \mathcal M_+(\mathcal E)$, we denote by $\M(\mathsf r) := \sum_k \mathsf r_k$ its total mass. If $\M(\mathsf r) = 1$, we say that $\mathsf r$ is a probability measure on $\mathcal E$, and we write $\mathsf r \in \P(\mathcal E)$. The topology considered on $\mathcal M_+(\mathcal E)$ is nothing but the one of $\R^I$.

\item In the same way, we identify the set $\FF ( \mathcal E ; \R)$ of real functions $Z$ on $\mathcal E$ with $\R^I$ through the correspondence  
\begin{equation*}
Z \in \FF ( \mathcal E ; \R) \leftrightsquigarrow (Z_k := Z(u_k))_{k \in I} \in \R_+^I.
\end{equation*}
Depending on the context, we will either call such functions $Z$ \emph{test functions}, or  \emph{random variables}, thinking of $\mathcal E$ as a measurable set. The random variables that we will consider will actually often be slightly more general, and be allowed to take the value $- \infty$, in which case we will tell it explicitly.

\item Through our identifications, the duality between $\mathcal M_+(\mathcal E)$ and $\FF ( \mathcal E ; \R)$ is nothing but the usual scalar product on $\R^I$, and denoted for all $Z \in \FF(\mathcal E; \R)$ and $\mathsf r \in \mathcal M_+(\mathcal E)$ by
\begin{equation*}
\cg Z, \mathsf r \cd := \sum_k Z_k \mathsf r_k.
\end{equation*}
When $Z$ possibly takes the value $- \infty$, we always choose by convention $- \infty \times 0= 0$.

\item In the context of the introduction, when $\DD = \{ x_1, \dots, x_N \}$ and $\FF = \{y_1, \dots, y_M \}$ are two nonempty finite spaces and $\mathcal E = \DD \times \FF$, then the corresponding $I$ is the product space $\{ 1, \dots, N\}\times \{ 1,\dots, M \}$, and $\bar R \in \mathcal M_+(\DD \times \FF)$, is seen as a matrix. We define its marginals $\mu^{\bar R} \in \mathcal M_+(\DD)$ and $\nu^{\bar R} \in \mathcal M_+(\FF)$ by the formulas
\begin{equation}
\label{eq:notation_marginals}
\forall i = 1, \dots, N, \quad \mu^{\bar R}_i := \sum_j \bar R_{ij}, \quad \mbox{and} \quad \forall j = 1, \dots M, \quad  \nu^{\bar R}_j := \sum_i \bar R_{ij} .
\end{equation}
Of course, $\mu^{\bar R}$ and $\nu^{\bar R}$ have the same total mass as $\bar R$, that is:
\begin{equation}
\label{eq:mass_identities}
    \mathsf M(\bar R) = \mathsf M(\mu^{\bar R}) = \mathsf M(\nu^{\bar R}).
\end{equation}
In particular, if $R$ is a probability measure, its marginals are probability measures as well.

\item As before, if $\mu \in \mathcal M_+(\DD)$ and $\nu \in \mathcal M_+(\FF)$, we call $\Pi(\mu,\nu)$ the set of those $R \in \mathcal M_+(\DD \times \FF)$ such that $\mu^R = \mu$ and $\nu^R = \nu$.

\item For the sake of simplicity, we do not use different notations for the same functions applied in different context. For instance, notations for the total mass $\mathsf M$ or the relative entropy $H$ (see Definition~\ref{def:relative_entropy} below) might be applied to different sets $\mathcal{E}$ namely $\mathcal{D}$, $\mathcal{F}$ and
$\mathcal{D}\times \mathcal{F}$.
\end{itemize}

\subsection{First properties of the relative entropy}

This subsection only contains easy and very well known results concerning the relative entropy that will be useful in the sequel. We stick to the finite case as this is the one studied in this paper, and we provide some proofs for the readers who are not acquainted with this notion of entropy, but all the properties given here are known in a much wider context, see for instance~\cite{Leonard2014}. 

As already said in the introduction, the relative entropy is defined as follows.
\begin{defn}
\label{def:relative_entropy}
	Let $\mathcal E = \{u_k, \ k \in I\}$ be a finite set and $\mathsf{r} = (\mathsf{r}_k) \in \mathcal M_+(\mathcal E)$. For all $\bar{\mathsf{r}} = (\bar{\mathsf{r}}_k) \in \mathcal M_+(\mathcal E)$, the relative entropy of $\bar{\mathsf{r}}$ w.r.t $\mathsf r$ is the value in $[0,+\infty]$ given by
	\begin{equation*}
	H(\bar{\mathsf r}|\mathsf r) := \sum_{k}\Big\{ \bar{\mathsf{r}}_{k} \log \frac{\bar{\mathsf{r}}_{k}}{\mathsf{r}_{k}} + \mathsf{r}_{k}  - \bar{\mathsf{r}}_{k} \Big\}= \sum_{k} \bar{\mathsf{r}}_{k} \log \frac{\bar{\mathsf{r}}_{k}}{\mathsf{r}_{k}} + \M(\mathsf r) - \M(\bar{\mathsf r}),
	\end{equation*}
	with convention $a \log \frac{a}{0} = + \infty$ for all $a>0$, and $0 \log 0 = 0 \log \frac{0}{0} = 0$. 
\end{defn}

First, this definition provides a convex function with good continuity properties. We state them in the following proposition, for which we omit the straightforward proof.

\begin{prop}
\label{prop:continuity_H}
	Let $\mathcal E$ be a finite set and $\mathsf r \in \mathcal M_+(\mathcal E)$. The functional
	\begin{equation*}
	\bar{\mathsf r} \in \mathcal M_+(\mathcal E) \mapsto H(\bar{\mathsf r} | \mathsf r) \in [0, + \infty]
	\end{equation*}
	is strictly convex, lower semicontinuous, and continuous on its domain, which is the closed set $\{ \bar{\mathsf r} \ll \mathsf r\} \subset \mathcal M_+(\mathcal E)$.
	
	For a given $\bar{\mathsf r} \in \mathcal M_+(\mathcal E)$, the functional
	\begin{equation*}
	\mathsf r \in \mathcal M_+(\mathcal E) \mapsto H(\bar{\mathsf r} | \mathsf r) \in [0, + \infty]
	\end{equation*}
	is convex and continuous for the canonical topology of $[0,+\infty]$. Its domain is the open set $\{ \mathsf r \gg \bar{\mathsf r}\} \subset \mathcal M_+(\mathcal E)$.
\end{prop}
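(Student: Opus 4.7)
The plan is to reduce everything to a one-variable analysis of the elementary function
\begin{equation*}
h(a,b) := a \log \tfrac{a}{b} + b - a \quad \text{on } \R_+ \times \R_+,
\end{equation*}
taken with the conventions of Definition~\ref{def:relative_entropy}, since $H(\bar{\mathsf r}|\mathsf r) = \sum_k h(\bar{\mathsf r}_k, \mathsf r_k)$ and all the properties at stake (convexity, lower semicontinuity, continuity, and location of the domain) are stable under finite sums.

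For the first assertion, I would fix $\mathsf r$ and study the partial map $a \mapsto h(a,b)$. When $b > 0$, it is $C^\infty$ on $(0,\infty)$ with second derivative $1/a > 0$, hence strictly convex, and it extends continuously to $a = 0$ with value $b$. When $b = 0$, it equals $0$ at $a = 0$ and $+\infty$ for $a > 0$, hence is merely lower semicontinuous. Summing over $k$, one gets convexity and lower semicontinuity of $\bar{\mathsf r} \mapsto H(\bar{\mathsf r} | \mathsf r)$ on $\mathcal M_+(\mathcal E)$, together with continuity on the effective domain $\{\bar{\mathsf r} \ll \mathsf r\}$, which by inspection equals the intersection of the closed half-spaces $\{\bar{\mathsf r}_k = 0\}$ taken over all indices $k$ with $\mathsf r_k = 0$, and is therefore closed. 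For strict convexity, the key observation is that any two distinct elements of the domain must differ at some index $k$ where $\mathsf r_k > 0$, so that the corresponding term is strictly convex and strict convexity of the sum follows.

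For the second assertion, I would fix $\bar{\mathsf r}$ and study the partial map $b \mapsto h(a,b)$. When $a > 0$ the second derivative equals $a/b^2 > 0$ on $(0,\infty)$, giving convexity; moreover, $-a\log b \to +\infty$ as $b \to 0^+$, so the map extends continuously to $b = 0$ with value $+\infty$, viewed in the canonical topology of $[0,+\infty]$. When $a = 0$ it reduces to the linear map $b \mapsto b$, which is continuous on $[0,\infty)$. Summing, $\mathsf r \mapsto H(\bar{\mathsf r}|\mathsf r)$ is convex and continuous as a map into $[0,+\infty]$, with finiteness domain $\{\mathsf r_k > 0 \text{ whenever } \bar{\mathsf r}_k > 0\} = \{\mathsf r \gg \bar{\mathsf r}\}$, which is the intersection of finitely many open half-spaces, hence open.

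The argument is entirely elementary and presents no real obstacle; the only points requiring a modicum of care are the case split $b > 0$ versus $b = 0$ (respectively $a > 0$ versus $a = 0$) in the one-term analysis of $h$, and the remark that strict convexity of the full sum on its domain is forced by the existence of at least one genuinely strictly convex term between any two distinct competitors.
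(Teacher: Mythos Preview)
Your proof is correct and complete; the paper itself omits the proof of this proposition as ``straightforward'', so there is nothing to compare against. One cosmetic point: the sets $\{\bar{\mathsf r}_k = 0\}$ are coordinate hyperplanes rather than half-spaces, but this does not affect the argument.
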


The most useful property of the relative entropy is the computation of its Legendre transform. This property can be stated as follows.

\begin{theo}
	Let $\mathcal E = \{ u_k, \ k \in I\}$ be a finite set, and $\mathsf r \in \mathcal M_+(\mathcal E)$. For all test function $Z$ possibly taking the value $- \infty$ on $\mathcal E$ and all nonnegative measure $\bar{\mathsf{r}}$ on $\mathcal E$, we have
\begin{equation}
\label{eq:legendre_transform_entropy}
\cg Z, \bar{\mathsf{r}} \cd \leq H(\bar{\mathsf{r}} | \mathsf r ) + \cg e^{Z} - 1, \mathsf r \cd,
\end{equation}
with conventions $e^{-\infty} = 0$, $ -\infty \times 0 = 0$ and $+\infty - \infty = + \infty$.

Moreover, equality in $\R$ holds if and only if $\bar{\mathsf r} \ll \mathsf r$ and for all $k \in I$,
\begin{equation}
\label{eq:condition_equality_legendre_transform}
 Z_k =\log \frac{\bar{\mathsf{r}}_k}{\mathsf r_k} \in [-\infty,+\infty)
\end{equation}
 with convention $\log \frac{0}{a} = - \infty$ for all $a \geq 0$.
\end{theo}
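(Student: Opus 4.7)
The plan is to reduce the inequality to a single pointwise inequality on each index $k \in I$ and then sum. By bilinearity of both sides in the index, it suffices to show that for every $k$,
\[
Z_k \bar{\mathsf{r}}_k \;\leq\; \bar{\mathsf{r}}_k \log\frac{\bar{\mathsf{r}}_k}{\mathsf{r}_k} + \mathsf{r}_k - \bar{\mathsf{r}}_k + (e^{Z_k}-1)\mathsf{r}_k,
\]
with equality in $\R$ if and only if $\mathsf{r}_k = 0 \Rightarrow \bar{\mathsf{r}}_k = 0$ and $Z_k = \log(\bar{\mathsf{r}}_k/\mathsf{r}_k) \in [-\infty, +\infty)$. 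Summing then yields both \eqref{eq:legendre_transform_entropy} and \eqref{eq:condition_equality_legendre_transform}; in particular, equality in $\R$ forces $\bar{\mathsf{r}}_k = 0$ whenever $\mathsf{r}_k = 0$ (see case (ii) below), which is exactly $\bar{\mathsf{r}} \ll \mathsf{r}$.

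The generic case is $\mathsf{r}_k > 0$, $\bar{\mathsf{r}}_k > 0$, $Z_k \in \R$. Setting $s := e^{Z_k}\mathsf{r}_k/\bar{\mathsf{r}}_k > 0$, the elementary inequality $\log s \leq s - 1$, which is strict unless $s = 1$, reads $Z_k + \log(\mathsf{r}_k/\bar{\mathsf{r}}_k) \leq e^{Z_k}\mathsf{r}_k/\bar{\mathsf{r}}_k - 1$; multiplying by $\bar{\mathsf{r}}_k > 0$ and rearranging gives exactly the desired pointwise inequality, with equality iff $s = 1$, i.e., $Z_k = \log(\bar{\mathsf{r}}_k/\mathsf{r}_k)$.

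It then remains to handle the boundary cases, which is routine bookkeeping with the stated conventions and is the only delicate point: (i) if $\bar{\mathsf{r}}_k = 0$, then the left-hand side equals $0$ (using $-\infty \times 0 = 0$), while the right-hand side equals $\mathsf{r}_k e^{Z_k} \geq 0$; equality holds iff $\mathsf{r}_k = 0$ or $Z_k = -\infty$, which matches the claim since $\log(0/\mathsf{r}_k) = -\infty$ when $\mathsf{r}_k > 0$, and the condition is vacuous when $\mathsf{r}_k = 0$; (ii) if $\bar{\mathsf{r}}_k > 0$ and $\mathsf{r}_k = 0$, the right-hand side is $+\infty$ by $a\log(a/0) = +\infty$, so the inequality holds trivially but can never be an equality in $\R$, confirming the absolute continuity part of the equality condition; (iii) if $\bar{\mathsf{r}}_k > 0$ and $Z_k = -\infty$, the left-hand side is $-\infty$, so the inequality holds and again cannot be an equality in $\R$, consistent with $\log(\bar{\mathsf{r}}_k/\mathsf{r}_k) > -\infty$ under $\bar{\mathsf{r}}_k > 0$. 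The inequality itself is thus elementary; the only real work is checking that the conventions fit the equality characterization in every degenerate configuration.
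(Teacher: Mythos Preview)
Your proof is correct and follows essentially the same route as the paper's: reduce to the pointwise inequality $Z_k \bar{\mathsf r}_k \leq \bar{\mathsf r}_k \log(\bar{\mathsf r}_k/\mathsf r_k) + \mathsf r_k - \bar{\mathsf r}_k + (e^{Z_k}-1)\mathsf r_k$, verify it (the paper just writes ``by direct real computations'' where you spell out the substitution $s = e^{Z_k}\mathsf r_k/\bar{\mathsf r}_k$ and invoke $\log s \le s-1$), track the equality cases, and sum over $k$. Your case analysis for the degenerate configurations is more explicit than the paper's but amounts to the same verification.
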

\begin{proof}
	Let $\mathsf r, \bar{\mathsf{r}}$ and $Z$ be as in the statement of the theorem. If $H(\bar{\mathsf r} | \mathsf r) = + \infty$, there is nothing to prove, and we assume $\bar{\mathsf r} \ll \mathsf r$.
	
	By direct real computations, with the same conventions as in the statement of the theorem, we find that for all $k \in I$:
	\begin{equation*}
	Z_k \bar{\mathsf r}_k \leq \bar{\mathsf r}_k \log \frac{\bar{\mathsf r}_k}{\mathsf r_k} + \mathsf r_k - \bar{\mathsf r}_k +  \big( e^{Z_k} - 1 \big)\mathsf r_k,
	\end{equation*}
	with equality if and only if $\mathsf r_k = \bar{\mathsf r}_k = 0$ or $\mathsf r_k>0$ and
	\begin{equation*}
	Z_k = \log \frac{\bar{\mathsf{r}}_k}{\mathsf r_k} \in [-\infty,+\infty).
	\end{equation*}
	We find~\eqref{eq:legendre_transform_entropy} and~\eqref{eq:condition_equality_legendre_transform} by summing this inequality over $k$.
\end{proof}

This theorem will be useful as such, but also implies the following corollary which gives a full understanding of one step in the Sinkhorn algorithm~\eqref{eq:sinkhorn}.

\begin{cor}
\label{cor:one_step_sinkhorn}
	Let $\DD$ and $\FF$ be two finite sets, and $\bar R,R \in \mathcal M_+(\DD \times \FF)$. With the notations of~\eqref{eq:notation_marginals}, we have
	\begin{equation}
	\label{eq:ineq_proj}
	H(\mu^{\bar R} | \mu^R) \leq H(\bar R | R) \qquad \mbox{and} \qquad H(\nu^{\bar R} | \nu^R) \leq H(\bar R | R).
	\end{equation} 
	In the case where $H(\bar R | R)$ is finite, equality holds if and only if for all $i,j$, respectively:
	\begin{equation*}
	\bar R_{ij} = \frac{\mu^{\bar R}_i}{\mu^R_i} R_{ij} \qquad \mbox{and} \qquad \bar R_{ij} = \frac{\nu^{\bar R}_j}{\nu^R_j} R_{ij}, 
	\end{equation*}
	with convention $\frac{0}{0} = 0$.
	
	In particular, given $R \in \mathcal M_+(\DD \times \FF)$ and $\mu \in \mathcal M_+(\DD)$, the problem
	\begin{equation}
	\label{eq:pb_first_marginal}
	\min \Big\{ H(P|R) \, \Big| \, \mu^P = \mu \Big\}
	\end{equation}
	admits a solution if and only if $H(\mu | \mu^R) < + \infty$, and in this case, this solution $P$ is unique and satisfies for all $i,j$
	\begin{equation}
	\label{eq:sol_pb_first_marginal}
	P_{ij} = \frac{\mu_i}{\mu^R_i} R_{ij}
	\end{equation}
	with convention $\frac{0}{0} = 0$. Moreover, $H(P|R) = H(\mu | \mu^R)$.
	
	Similarly, given $R \in \mathcal M_+(\DD \times \FF)$ and $\nu \in \mathcal M_+(\FF)$, the problem
	\begin{equation*}
	\min \Big\{ H(Q|R) \, \Big| \, \nu^Q = \nu \Big\}
	\end{equation*}
	admits a solution if and only if $H(\nu | \nu^R) < + \infty$, and in this case, this solution $Q$ is unique and satisfies for all $i,j$
	\begin{equation*}
	Q_{ij} = \frac{\nu_j}{\nu^R_j} R_{ij}
	\end{equation*}
	with convention $\frac{0}{0} = 0$. Moreover, $H(Q|R) = H(\nu | \nu^R)$.
\end{cor}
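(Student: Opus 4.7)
The plan is to deduce the entire corollary from the Legendre inequality \eqref{eq:legendre_transform_entropy} and its equality case \eqref{eq:condition_equality_legendre_transform}, by lifting test functions from $\DD$ to $\DD\times\FF$. To any $Z = (Z_i)$ on $\DD$ with values in $[-\infty,+\infty)$, associate the lifted function $\tilde Z$ on $\DD\times\FF$ defined by $\tilde Z_{ij} := Z_i$. From the definition of the marginals \eqref{eq:notation_marginals}, one gets directly
\begin{equation*}
\cg \tilde Z, \bar R \cd = \cg Z, \mu^{\bar R} \cd \qquad \mbox{and} \qquad \cg e^{\tilde Z} - 1, R \cd = \cg e^Z - 1, \mu^R \cd,
\end{equation*}
so applying \eqref{eq:legendre_transform_entropy} to $\tilde Z$ with respect to $\bar R$ and $R$ yields, for every such $Z$,
\begin{equation*}
\cg Z, \mu^{\bar R} \cd \leq H(\bar R | R) + \cg e^Z - 1, \mu^R \cd.
\end{equation*}

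The first step is to specialize this inequality to $Z_i := \log(\mu^{\bar R}_i/\mu^R_i)$, with the convention $\log(0/a) = -\infty$ for $a \geq 0$. If $\mu^{\bar R} \not\ll \mu^R$, then some index $i_0$ has $\mu^R_{i_0} = 0 < \mu^{\bar R}_{i_0}$, forcing $R_{i_0 j_0} = 0 < \bar R_{i_0 j_0}$ for some $j_0$, so $H(\bar R | R) = +\infty$ and the first inequality in \eqref{eq:ineq_proj} is trivial. Otherwise, the equality case of the theorem applied on $\DD$ tells us that the left-hand side of the displayed inequality is exactly $H(\mu^{\bar R}|\mu^R)$, giving \eqref{eq:ineq_proj}. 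For the equality case when $H(\bar R|R) < + \infty$, saturation in \eqref{eq:ineq_proj} forces saturation of the lifted Legendre inequality for this specific $\tilde Z$, which by \eqref{eq:condition_equality_legendre_transform} imposes $\log(\bar R_{ij}/R_{ij}) = \tilde Z_{ij} = \log(\mu^{\bar R}_i/\mu^R_i)$ wherever $R_{ij} > 0$; this rewrites as $\bar R_{ij} = (\mu^{\bar R}_i/\mu^R_i) R_{ij}$, and the convention $0/0 = 0$ absorbs the indices where $R_{ij} = 0$ (thanks to $\bar R \ll R$) or where $\mu^R_i = 0$ (in which case $\mu^{\bar R}_i = 0$).

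The minimization statement then follows at once. Any competitor $P$ with $\mu^P = \mu$ satisfies $H(P|R) \geq H(\mu|\mu^R)$ by \eqref{eq:ineq_proj}, so no solution exists when $H(\mu|\mu^R) = +\infty$. Conversely, when $H(\mu|\mu^R) < +\infty$, the candidate $P$ defined by \eqref{eq:sol_pb_first_marginal} has $\mu^P = \mu$ (finiteness forces $\mu_i = 0$ whenever $\mu^R_i = 0$, so the $0/0$ convention is harmless), and a one-line computation gives $H(P|R) = H(\mu|\mu^R)$, saturating the lower bound; uniqueness comes from strict convexity of $H(\,\cdot\,|R)$ on its domain. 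The second-marginal version is obtained by the symmetric argument after swapping the roles of $\DD$ and $\FF$. The only genuinely delicate point is the bookkeeping with the conventions on vanishing entries; cleanly separating the regimes $\mu^{\bar R} \ll \mu^R$ (use Legendre saturation) and $\mu^{\bar R} \not\ll \mu^R$ (automatic $+\infty$) is what makes the lift-and-saturate argument work uniformly.
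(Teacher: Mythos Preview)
Your argument is correct and follows essentially the same route as the paper's proof: apply the Legendre inequality \eqref{eq:legendre_transform_entropy} on $\DD\times\FF$ with the lifted test function $\tilde Z_{ij}=\log(\mu^{\bar R}_i/\mu^R_i)$, read off \eqref{eq:ineq_proj} and its equality case from \eqref{eq:condition_equality_legendre_transform}, and deduce the minimization statement by saturating the lower bound with the explicit candidate. Your treatment is in fact slightly more careful than the paper's, since you explicitly isolate the case $\mu^{\bar R}\not\ll\mu^R$ (where $Z$ would take the value $+\infty$, outside the range allowed by the theorem) before invoking Legendre.
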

\begin{proof}
	The first inequality in~\eqref{eq:ineq_proj} is a direct application of~\eqref{eq:legendre_transform_entropy} with $\mathsf r = R$, $\bar{\mathsf r} = \bar R$ and for all $i,j$, 
	\begin{equation*}
	Z_{ij} = \log \frac{\mu^{\bar R}_i}{\mu^R_i}.
	\end{equation*}
	The second inequality is proved in the same way, and the equality case is a consequence of~\eqref{eq:condition_equality_legendre_transform}.
	
	For the second part of the statement, let us observe that for all $P$ satisfying the constraint in~\eqref{eq:pb_first_marginal}, because of~\eqref{eq:ineq_proj}, $H(P|R) \geq H(\mu | \mu^R)$, which -- by the equality case -- is attained if and only if~\eqref{eq:sol_pb_first_marginal} holds. The problem involving the second marginal is treated in the same way.
\end{proof}

\subsection{The Schr\"odinger problem: assumptions and terminology}
\label{subsec:ass}

Let $\DD = \{ x_1, \dots, x_N \}$ and $\FF = \{y_1, \dots, y_M \}$ be two nonempty finite sets, and let us choose a reference measure $R \in \mathcal M_+(\DD \times \FF)$. Given $\mu \in \mathcal M_+(\DD)$ and $\nu \in \mathcal M_+(\FF)$, the Schr\"odinger problem, already defined in the introduction, rewrites with the notations of Subsection~\ref{subsec:notations}:
\begin{equation}
\label{eq:def_Sch}
\Sch(R; \mu, \nu) := \min \Big\{ H(\bar R|R)  \, \Big | \, \bar R \in \mathcal M_+(\mu, \nu) \mbox{ such that } \mu^{\bar R} = \mu \mbox{ and }\nu^{\bar R} = \nu \Big\}.
\end{equation}
\begin{rem}
Here, we define $\Sch(R; \mu,\nu)$ as the optimal value of our problem. However, with an abusive terminology, we will refer to the minimizer of the r.h.s.\ of~\eqref{eq:def_Sch} as "the solution of $\Sch(R; \mu,\nu)$". More generally, we will call "the problem $\Sch(R; \mu,\nu)$" the optimization problem consisting in computing the value $\Sch(R; \mu,\nu)$.
\end{rem}

As we will see in Theorem~\ref{thm:convergence_sinkhorn}, the Sinkhorn algorithm~\eqref{eq:sinkhorn} associated with the problem $\Sch(R; \mu,\nu)$ is well defined if and only if the following assumption holds.
\begin{conj}
\label{ass:condition_sinkhorn_well_defined}
Let $R \in \mathcal M_+(\DD \times \FF)$, $\mu \in \mathcal M_+(\DD)$ and $\nu \in \mathcal M_+(\FF)$, and let us call
\begin{equation}
\label{eq:def_E}
\mathcal{E} := \Big\{ (x_i,y_j) \in \DD \times \FF \mbox{ such that } R_{ij}>0, \, \mu_i>0 \mbox{ and }\nu_j>0 \Big\}.
\end{equation}
We say that the triple $(R;\mu,\nu)$ satisfies Assumption~\ref{ass:condition_sinkhorn_well_defined} provided $R^0 := \1_{\mathcal E} \cdot R$ is such that:
\begin{equation}
\label{eq:assumption_sinkhorn_well_defined}
H(\mu | \mu^{R^0}) < + \infty \qquad \mbox{and} \qquad H(\nu | \nu^{R^0}) < + \infty.
\end{equation}
\end{conj}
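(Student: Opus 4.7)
The final boxed item uses the environment defined by \verb|\newtheorem{conj}[theo]{Assumption}| and its body reads \emph{``We say that the triple $(R;\mu,\nu)$ satisfies Assumption~\ref{ass:condition_sinkhorn_well_defined} provided \dots''}. It is therefore a definition, not a proposition: it introduces a name for a condition on $(R;\mu,\nu)$ and states no mathematical claim. Consequently there is no theorem for me to prove here, and a ``proof plan'' in the usual sense does not apply. The only thing to verify is well-posedness, which is immediate: $\mathcal{E}$ is a subset of the finite product $\DD\times\FF$, the pointwise product $R^0 := \1_{\mathcal E}\cdot R$ is an element of $\mathcal M_+(\DD\times\FF)$, its marginals $\mu^{R^0}$ and $\nu^{R^0}$ are given by the formulas in~\eqref{eq:notation_marginals}, and the two relative entropies appearing in~\eqref{eq:assumption_sinkhorn_well_defined} make sense in $[0,+\infty]$ by Definition~\ref{def:relative_entropy}.

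It may still be useful to record the role this assumption will play, since that is presumably what an eventual proof \emph{of the next result} will exploit. Heuristically, the indices $(i,j)$ with $\mu_i = 0$, $\nu_j = 0$, or $R_{ij} = 0$ carry no mass in any competitor of $\Sch(R;\mu,\nu)$, so replacing $R$ by $R^0 = \1_{\mathcal E}\cdot R$ does not alter the Schrödinger problem at hand. With this cleaned-up reference measure, Corollary~\ref{cor:one_step_sinkhorn} shows that the two conditions $H(\mu\,|\,\mu^{R^0}) < +\infty$ and $H(\nu\,|\,\nu^{R^0}) < +\infty$ are precisely what is needed so that the two projection problems
\begin{equation*}
\min\{H(P\,|\,R^0) : \mu^P = \mu\} \qquad \text{and} \qquad \min\{H(Q\,|\,R^0) : \nu^Q = \nu\}
\end{equation*}
both admit a unique (explicit) minimizer, i.e.\ so that the first Sinkhorn iterate $P^1$ and the first column-normalized iterate $Q^1$ of~\eqref{eq:sinkhorn} are well defined as elements of $\mathcal M_+(\DD\times\FF)$.

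Since the statement is a definition, there is no ``main obstacle'' to anticipate. The genuine mathematical content alluded to in the sentence preceding the assumption — that Assumption~\ref{ass:condition_sinkhorn_well_defined} is equivalent to the Sinkhorn algorithm being well defined — belongs to Theorem~\ref{thm:convergence_sinkhorn}, not to the present environment, and so I defer any proof sketch to that theorem.
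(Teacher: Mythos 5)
You are right that this environment is a definition rather than a claim, and the paper accordingly provides no proof of it; your well-posedness check (that $\mathcal E$, $R^0$, its marginals, and the two relative entropies are all meaningful) is all that could be asked, and it is correct. Your remarks on the role of the condition match the paper's own discussion (the remark after Theorem~\ref{thm:convergence_sinkhorn} and Step~1 of its proof), so there is nothing further to add.
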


This assumption is easily seen to be necessary for $\Sch(R; \mu, \nu)$ to admit a solution. Under Assumption~\ref{ass:condition_sinkhorn_well_defined} either $\M(\mu) = \M(\nu) = 0$, or none of them is $0$. In the second case, up to replacing $\DD$ by $\DD'$, the support of $\mu$, $\FF$ by $\FF'$, the support of $\nu$, and $R$ by its restriction (or equivalently of the one of $R^0$) on $\DD' \times \FF'$, we end up with the following assumption, that will often be used in this paper.

\begin{conj}
\label{ass:full_support}
Let $R \in \mathcal M_+(\DD \times \FF)$, $\mu \in \mathcal M_+(\DD)$ and $\nu \in \mathcal M_+(\FF)$. We say that the triple $(R;\mu,\nu)$ satisfies Assumption~\ref{ass:full_support} provided the support of $\mu$ and $\mu^R$ is $\DD$ and the support of $\nu$ and $\nu^R$ is $\FF$.
\end{conj}

\bigskip

The Schr\"odinger problem~\eqref{eq:def_Sch} consists in minimizing a convex function under linear constraints. Therefore, the functional $(\mu,\nu) \in \mathcal M_+(\DD) \times \mathcal M_+(\FF) \mapsto \Sch(R; \mu,\nu) \in [0, + \infty]$ is convex. 

In the case where Assumption~\ref{ass:full_support} holds, following the usual terminology of the matrix scaling theory (except for the last item which is more exotic), see~\cite{idel2016review}, we say that:
\begin{itemize}
    \item The problem is \emph{scalable} if $(\mu,\nu)$ is in the relative interior of the domain of $\Sch(R; \cdot)$. In this case, $\M(\mu) = \M(\nu)$, the Schr\"odinger problem admits a unique solution $R^*$, $R^* \sim R$ in the sense of measures, and the Sinkhorn algorithm converges towards $R^*$, at a linear rate. In Lemma~\ref{lem:non_degenerate_case}, we recall an explicit necessary and sufficient condition on $R$, $\mu$, $\nu$ for $\Sch(R; \mu, \nu)$ to be scalable.
    \item The problem is \emph{approximately scalable} if $(\mu,\nu)$ is at the relative boundary of the domain of $\Sch(R; \cdot)$. In this case, $\M(\mu) = \M(\nu)$, the Schr\"odinger problem admits a unique solution $R^*$, and the Sinkhorn algorithm converges towards $R^*$. However, in this case, the support of $R^*$ is strictly included in the support of $R$ (else, we easily see that we are in the scalable case), and the rate cannot be linear anymore: as proved in~\cite{Achilles1993}, a linear rate of convergence for the Sinkhorn algorithm is not compatible with the appearance of new zero entries at the limit. We recall at Theorem~\ref{thm:CNS} a necessary and sufficient condition on $R$, $\mu$ and $\nu$ for $\Sch(R; \mu,\nu)$ to be at least approximately scalable, that is, either approximately scalable or scalable.
    \item The problem is \emph{non-scalable} if $\M(\mu) = \M(\nu)$, but the Schr\"odinger problem $\Sch(R; \mu,\nu)$ does not admit a solution. This is the case when the condition of Theorem~\ref{thm:CNS} does not hold. This case is the main case of interest in this work.
    \item The problem is \emph{unbalanced} if $\M(\mu) \neq \M(\nu)$. Calling $\mu' := \mu / \mu(\DD)$ and $\nu' := \nu/\nu(\FF)$ their normalized versions, we will say that $\Sch(R; \mu,\nu)$ is respectively unbalanced scalable, unbalanced approximately scalable and unbalanced non-scalable whenever $\Sch(R; \mu',\nu')$ is scalable, approximately scalable or non-scalable.
\end{itemize}
Yet, with an abuse of terminology, we will often refer to the non-scalable case for results that are true in \emph{any} situation, including the balanced and unbalanced non-scalable ones, which are often the most difficult.

\section{The Sinkhorn algorithm in the non-scalable case}
\label{sec:convergence_sinkhorn}

In this section, we consider $R \in \mathcal M_+(\DD \times \FF)$, $\mu \in \mathcal M_+(\DD)$ and $\nu \in \mathcal M_+(\FF)$ that we identify respectively with a matrix and two vectors, as before. 

The goal of this section is to show that under obvious necessary assumptions, then the algorithm given in~\eqref{eq:sinkhorn} is well defined, and that the sequences $(P^n)_{n \in \N^*}$ and $(Q^n)_{n \in \N^*}$ that it provides converge separately towards matrices $P^*$ and $Q^*$ that we define now. It will be obvious from their definition that these matrices coincide if and only if the problem $\Sch(R; \mu, \nu)$ defined in~\eqref{eq:def_Sch} admits a solution, that is, if it is at least approximately scalable. Hence our proof recovers the classical fact that the Sinkhorn algorithm converges towards the solution of the Schr\"odinger problem as soon as the latter exists.

 The first step to define $P^*$ and $Q^*$ is to define a pair of new marginals $\mu^* \in \mathcal M_+(\DD)$ and $\nu^* \in \mathcal M_+(\FF)$ as solutions of the following optimization problem:
 \begin{equation}
\label{eq:def_mu*_nu*}
\begin{gathered}
\mu^* := \argmin \Big\{ H(\bar \mu | \mu)\, \Big| \, \bar \mu = \mu^Q \mbox{ for some } Q  \mbox{ with } H(Q | R) < + \infty \mbox{ and } \nu^Q = \nu \Big\},\\
\nu^* := \argmin \Big\{ H(\bar \nu | \nu)\, \Big| \, \bar \nu = \nu^P \mbox{ for some } P  \mbox{ with } H(P|R) < + \infty \mbox{ and } \mu^P = \mu \Big\}.
\end{gathered}
 \end{equation}
The question of existence of $\mu^*$ and $\nu^*$ is treated in Theorem~\ref{thm:convergence_sinkhorn} below. Of course, if the problem $\Sch(R;\mu,\nu)$ admits a competitor, then $\mu^* = \mu$ and $\nu^* = \nu$.

\begin{rem}
\label{rem:mass_identities}
In the unbalanced case, notice that the total mass of $\nu^*$ is the one of $\mu$, and the total mass of $\mu^*$ is the one of $\nu$, that is, $\mathsf M(\nu^*) = \mathsf M(\mu)$ and $\mathsf M(\mu^*) = \mathsf M(\nu)$. 
\end{rem}

Then $P^*$ and $Q^*$ are simply defined as the solutions of the Schr\"odinger problems $\Sch(R;\mu, \nu^*)$ and $\Sch(R;\mu^*,\nu)$ respectively, that is:
\begin{equation}
\label{eq:def_P*_Q*}
P^* := \argmin \Big\{ H(P|R)  \, \Big | \, P \in \Pi(\mu, \nu^*) \Big\} \quad \mbox{and} \quad Q^* := \argmin \Big\{ H(Q|R)  \, \Big | \, Q \in \Pi(\mu^*, \nu) \Big\}.
\end{equation}
Of course, if the problem $\Sch(R;\mu,\nu)$ admits a competitor, and hence a solution, then both $P^*$ and $Q^*$ coincide with this solution.

Our convergence theorem can be stated as follows.
\begin{theo}
\label{thm:convergence_sinkhorn}
Let $R \in \mathcal M_+(\DD \times \FF)$, $\mu \in \mathcal M_+(\DD)$ and $\nu \in \mathcal M_+(\FF)$ satisfy Assumption~\ref{ass:condition_sinkhorn_well_defined}. The sequences $(P^n)_{n \in \N^*}$ and $(Q^n)_{n \in \N^*}$ from~\eqref{eq:sinkhorn}, the marginals $\mu^*$ and $\nu^*$ from~\eqref{eq:def_mu*_nu*} and the matrices $P^*$ and $Q^*$ from~\eqref{eq:def_P*_Q*} are well defined, and
\begin{equation*}
P^n \underset{n \to + \infty}{\longrightarrow} P^* \qquad \mbox{and} \qquad Q^n \underset{n \to + \infty}{\longrightarrow} Q^*.
\end{equation*}
\end{theo}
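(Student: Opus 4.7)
The plan is to proceed in four stages: well-definedness, establishing Pythagorean-type monotonicity, subsequential limits, and identification of the full limit.

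\textbf{Stage 1 (well-definedness).} I would first check by induction on $n$ that the iterates $P^n$ and $Q^n$ are well defined and have the right support structure. The base case uses Corollary~\ref{cor:one_step_sinkhorn}: the assumption $H(\mu|\mu^{R^0})<+\infty$ in Assumption~\ref{ass:condition_sinkhorn_well_defined} implies $H(\mu|\mu^R)<+\infty$ (since $\mu^R\geq\mu^{R^0}$ coordinatewise on the support of $\mu$), so the first projection $P^1$ exists and is given by the explicit formula $P^1_{ij}=\mu_i R_{ij}/\mu^R_i$. A direct check shows that the support of $P^1$ and $Q^1$ contains the set $\mathcal{E}$ from \eqref{eq:def_E}, which passes to all later iterates. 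Thus the chain of entropy–finiteness conditions needed at each step is preserved. For $\mu^*,\nu^*$, the feasible sets in \eqref{eq:def_mu*_nu*} are nonempty (they contain $\nu^{P^1}$ and $\mu^{Q^1}$ respectively), convex (by convexity of $\bar R\mapsto H(\bar R|R)$), and compact as subsets of $\mathcal{M}_+(\mathcal{D})$ and $\mathcal{M}_+(\mathcal{F})$ (they are contained in bounded boxes fixed by total mass). Strict convexity and lsc of $H(\cdot|\mu)$ and $H(\cdot|\nu)$ then yield unique minimizers $\mu^*,\nu^*$, and $P^*,Q^*$ exist and are unique by strict convexity of the entropy.

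\textbf{Stage 2 (Pythagorean monotonicity).} The workhorse identities, easily derived by plugging the explicit projection formulas into the entropy and using $\mu^{\bar R}=\mu$ resp.\ $\nu^{\bar R}=\nu$, read
\begin{equation*}
H(\bar R|Q^n)=H(\bar R|P^{n+1})+H(P^{n+1}|Q^n)\quad\text{if }\mu^{\bar R}=\mu,
\end{equation*}
\begin{equation*}
H(\bar R|P^{n+1})=H(\bar R|Q^{n+1})+H(Q^{n+1}|P^{n+1})\quad\text{if }\nu^{\bar R}=\nu.
\end{equation*}
Applied with $\bar R=P^*$ (which has $\mu^{P^*}=\mu$) and with $\bar R=Q^*$ (which has $\nu^{Q^*}=\nu$), each identity is only applicable to \emph{one} of the two projection steps. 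To bridge the remaining half-steps, I would compute directly the discrepancies $H(Q^*|P^{n+1})-H(Q^*|Q^n)$ and $H(P^*|Q^n)-H(P^*|P^n)$: using $P^{n+1}_{ij}/Q^n_{ij}=\mu_i/\mu^{Q^n}_i$ and summing, these reduce after a short calculation to expressions of the form $H(\mu^*|\mu^{Q^n})-H(\mu^*|\mu)$ and symmetrically $H(\nu^*|\nu^{P^n})-H(\nu^*|\nu)$. Combining, one obtains a Lyapunov-type recursion for $H(Q^*|Q^n)+H(P^*|P^n)$ (or a suitable affine combination) in which all remainder terms involve $H(\mu|\mu^{Q^n})$, $H(\nu|\nu^{P^{n+1}})$, $H(\mu^*|\mu^{Q^n})$ and $H(\nu^*|\nu^{P^n})$.

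\textbf{Stage 3 (subsequential convergence and identification).} The Lyapunov quantity above is bounded below by $0$ and its increments are controlled by nonnegative entropies, which forces the telescoping sum $\sum_n(H(\mu^*|\mu^{Q^n})-H(\mu^*|\mu)+\cdots)$ to be summable, hence the marginal sequences $(\mu^{Q^n})$ and $(\nu^{P^n})$ are precompact, and every cluster point $(P^\infty,Q^\infty)$ of $(P^n,Q^n)$ satisfies $\mu^{P^\infty}=\mu$ and $\nu^{Q^\infty}=\nu$. The crucial identification step is to show $\nu^{P^\infty}=\nu^*$ and $\mu^{Q^\infty}=\mu^*$: by the variational definition \eqref{eq:def_mu*_nu*}, the pair $(\mu^{Q^\infty},\nu^{P^\infty})$ is admissible, so $H(\nu^{P^\infty}|\nu)\geq H(\nu^*|\nu)$. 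To prove the reverse inequality, I would pass to the limit in the Pythagorean identity applied with $\bar R=P^*$, using lower semicontinuity of the entropy and the recursions above; the key point is that the increments $H(\nu^{P^n}|\nu)-H(\nu^*|\nu)$ must vanish along the subsequence, forcing $\nu^{P^\infty}=\nu^*$ by strict convexity. Once the marginals are identified, $P^\infty$ solves $\Sch(R;\mu,\nu^*)$, so $P^\infty=P^*$ by uniqueness, and analogously $Q^\infty=Q^*$. As the limit is independent of the subsequence, the full sequences converge.

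\textbf{Main obstacle.} The hardest step is stage~3: the quantities $H(Q^*|Q^n)$ and $H(P^*|P^n)$ are \emph{not} individually monotone, because the cross terms $H(\mu^*|\mu^{Q^n})-H(\mu^*|\mu)$ can have either sign (indeed $\mu^*\neq\mu$ exactly in the non-scalable case). Finding the right combined Lyapunov functional — and verifying that its decrement controls precisely the deviations $\mu^{Q^n}-\mu^*$ and $\nu^{P^n}-\nu^*$ — is the technical heart of the argument. The dual symmetry $H(\nu|\nu^*)=H(\mu^*|\mu)$, which one expects from the variational characterization of $(\mu^*,\nu^*)$, is precisely what makes the telescoping balance work, and its proof will likely require an independent characterization of $\mu^*$ and $\nu^*$ via Lagrange/dual conditions inherited from the Sinkhorn potentials $(a^n,b^n)$.
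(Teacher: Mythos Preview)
Your Stage~1 is essentially the paper's Step~1 and is fine. The overall telescoping/Pythagorean framework of Stages~2--3 is also close in spirit to the paper. However, you have misidentified the key inequality that closes the argument, and the fix you propose is the wrong one.

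The paper does \emph{not} build a combined Lyapunov functional out of $H(P^*|P^n)+H(Q^*|Q^n)$, nor does it use the dual symmetry $H(\nu|\nu^*)=H(\mu^*|\mu)$. Instead, it expands $H(Q|R)$ for an arbitrary $Q\in\Pi(\mu^*,\nu)$ with $H(Q|R)<+\infty$ (not only $Q=Q^*$) using the explicit product formula for $Q^n/R$, and obtains
\[
H(Q|R)=H(Q|Q^n)+\sum_{k=1}^n\big\{H(\nu|\nu^{P^k})-H(\mu^*|\mu)\big\}+\sum_{k=1}^n H(\mu^*|\mu^{Q^{k-1}}).
\]
The decisive point is that each bracket in the first sum is \emph{nonnegative}, because
\[
H(\nu|\nu^{P^k})=H(Q^k|P^k)\ \ge\ H(\mu^{Q^k}|\mu^{P^k})=H(\mu^{Q^k}|\mu)\ \ge\ H(\mu^*|\mu),
\]
where the first inequality is the marginal contraction of Corollary~\ref{cor:one_step_sinkhorn} and the last is simply the variational definition~\eqref{eq:def_mu*_nu*} of $\mu^*$. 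This single chain replaces your entire ``combined Lyapunov'' construction: both sums are then series of nonnegative terms bounded by $H(Q|R)$, so their terms tend to zero, giving $H(\mu^*|\mu^{Q^{n}})\to 0$ and hence $\mu^{Q^n}\to\mu^*$. Passing to the limit in the identity with $Q=\bar Q$ a cluster point and then with $Q=Q^*$ gives $H(Q^*|\bar Q)=0$, whence $\bar Q=Q^*$.

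By contrast, the route you sketch hinges on the identity $H(\nu|\nu^*)=H(\mu^*|\mu)$, which in the paper is established only \emph{after} the convergence theorem, in Remark~\ref{rem:swap_mu_mu*}, precisely by letting $k\to\infty$ in $H(\nu|\nu^{P^k})-H(\mu^*|\mu)\to 0$. Relying on it beforehand is circular unless you supply an independent proof via KKT conditions, which you do not. More importantly, it is unnecessary: once you notice the inequality $H(Q^k|P^k)\ge H(\mu^*|\mu)$, your own recursion for $H(Q^*|Q^n)$ already becomes monotone (the ``bad'' cross term $H(\mu^*|\mu^{Q^n})-H(\mu^*|\mu)$ is dominated by $H(Q^{n+1}|P^{n+1})-H(\mu^*|\mu)\ge 0$), and no coupling with $H(P^*|P^n)$ or dual symmetry is needed.
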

\begin{rem}
	\begin{itemize}
		\item Assumption~\ref{ass:condition_sinkhorn_well_defined} is necessary: it is straightforward to check that if $Q^1$ from~\eqref{eq:sinkhorn} is well defined, then $H(Q^1 | R^0) < + \infty$. In particular, projecting on the second marginal, we conclude that $H(\nu | \nu^{R^0}) < + \infty$. Arguing in the same way with $P^2$ in place of $Q^1$ and the second marginal in place of the first one, we see that if $P^2$ is well defined, then $H(\mu | \mu^{R^0}) < + \infty$. In particular, there is nothing to check before starting the algorithm: if the algorithm is able to compute $P^2$, then it means that our assumption is satisfied and that the convergence holds.
		\item Note that the topology for the convergence stated in the theorem does not matter since we are working in finite dimensional spaces. However, we believe that the result is still true replacing $\DD$ and $\FF$ by general Polish spaces. In this case, the convergence needs to be understood in the sense of the narrow topology, a topology for which the sequences $(P^n)$ and $(Q^n)$ can be proved to be compact due to the properties of their marginals.
		\item Remarkably, we will be able to prove this theorem without deriving the optimality conditions for $\mu^*$ and $\nu^*$. However, these optimality conditions will be needed in the next section, and hence written at Proposition~\ref{prop:optimality_conditions_mu*_nu*}.
		\item As developed in~\cite{Csiszar1975}, there exists a strong analogy between the relative entropy and square Euclidean distances, and this in spite of the lack of symmetry of the first. In particular, following this analogy, the Sinkhorn algorithm~\eqref{eq:sinkhorn} consists in iteratively othogonaly projecting on the convex sets of measures absolutely continuous w.r.t.\ $R$ satisfying the first and second marginal constraint respectively. 

With this picture in mind, we can give in~\cref{fig:sinkhorn} a visual representation of the scalable and non-scalable case. In the scalable case, the two convex sets intersect, and the sequences $(P^n)_{n \in \N^*}$ and $(Q^n)_{n \in \N^*}$ converge towards the point of the intersection that is the closest to~$R$. In the non-scalable case, the two convex sets do not intersect. However, the sequences $(P^n)_{n \in \N^*}$ and $(Q^n)_{n \in \N^*}$ still converge respectively to $P^*$ and $Q^*$, the two extreme points of the shortest line segment connecting both sets. Theorem~\ref{thm:convergence_sinkhorn} indeed justifies this type of behaviour for the Sinkhorn algorithm. 

One should still keep in mind that this analogy and our drawings are only sketchy. In reality, the projections are not orthogonal, and the convex sets have polygonal borders.

\begin{figure}
\includegraphics[width=.48\textwidth]{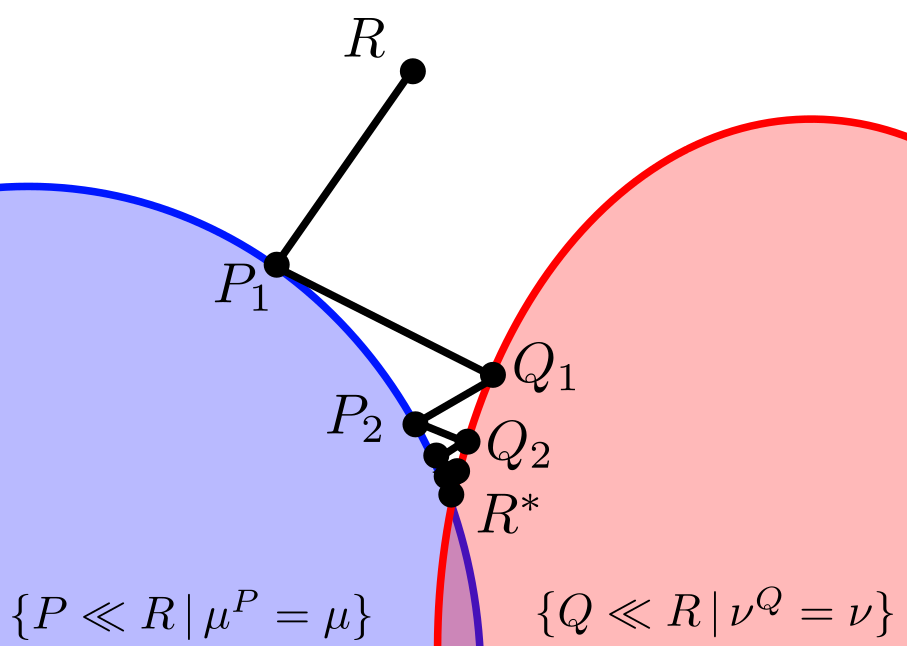}
\hfill
\includegraphics[width=.48\textwidth]{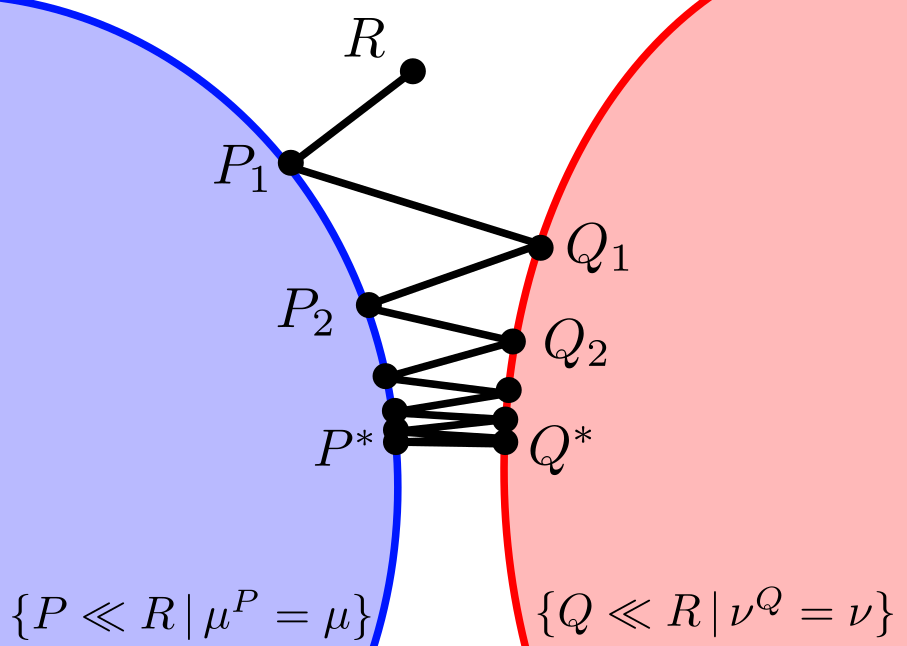}
\caption{\label{fig:sinkhorn} Sketchy representation of the Sinkhorn algorithm in the scalable case (to the left) and nonscalable case (to the right).}
\end{figure}
		\end{itemize}
\end{rem}
\begin{proof}
\underline{Step 1}: All the objects are well defined.

Let us first show that under the assumption of the theorem, the sequences $(P^n)_{n \in \N^*}$ and $(Q^n)_{n \in \N^*}$ are well defined. We start with $P^1$. As by assumption $\mu \ll \mu^{R^0} \ll \mu^R$, Corollary~\ref{cor:one_step_sinkhorn} shows that $P^1$ is well defined, and that for all $i,j$,
\begin{equation*}
P^1_{ij} = \frac{\mu_i}{\mu^R_i} R_{ij},
\end{equation*}
with convention $\frac{0}{0} = 0$. Clearly, $R^0 \ll P^1$, as the support of the latter is 
\begin{equation*}
\Big\{ (x_i,y_j)\in \DD \times \FF \mbox{ s.t.\ } R_{ij}>0 \mbox{ and } \mu_i>0\Big\} \supset \mathcal E.
\end{equation*}
Therefore, $\nu \ll \nu^{R^0} \ll \nu^{P^1}$. So once again, Corollary~\ref{cor:one_step_sinkhorn} shows that $Q^1$ is well defined, and that for all $i,j$,
\begin{equation*}
Q^1_{ij} = \frac{\nu_j}{\nu^{P^1}_{j}} P^1_{ij},
\end{equation*}
with convention $\frac{0}{0}=0$. The support of $Q^1$ is 
\begin{align*}
\Big\{ (x_i,y_j)\in \DD \times \FF &\mbox{ s.t.\ } P^1_{ij}>0 \mbox{ and } \nu_j>0\Big\}\\
&=\Big\{ (x_i,y_j) \in \DD \times \FF \mbox{ s.t. } R_{ij}>0 \mbox{ and } \mu_i>0 \mbox{ and } \nu_j>0\Big\} = \mathcal E.
\end{align*}

Then, a direct induction argument relying on the following formulas holding for all $n \in \N$ and all $i,j$:
\begin{equation}
\label{eq:induction_P_Q}
P^{n+1}_{ij} = \frac{\mu_i}{\mu^{Q^n}_i} Q^n_{ij} \qquad \mbox{and} \qquad Q^{n+1}_{ij} = \frac{\nu_i}{\nu^{P^{n+1}}_j} P^{n+1}_{ij},
\end{equation}
with convention $\frac{0}{0} = 0$ show that for all $n \geq 2$, $P^n$ and $Q^n$ are well defined and admit $\mathcal E$ as their common support.

Let us now show that $\mu^*$ and $\nu^*$ are well defined. Their role are symmetric, so we just need to show that $\mu^*$ is well defined. First $Q^1$ satisfies $\nu^{Q^1} = \nu$ and $H(Q^1 | R) < + \infty$. Therefore, the problem
\begin{equation*}
\inf \Big\{ H(\bar \mu | \mu),\, \bar \mu = \mu^Q \mbox{ for some } Q \mbox{ with } H(Q | R) < + \infty \mbox{ and } \nu^Q = \nu \Big\}
\end{equation*}
consists in minimizing the continuous (on its domain) and strictly convex function $\bar \mu \mapsto H(\bar \mu | \mu)$ over the nonempty compact convex set
\begin{equation*}
\Big\{ \bar \mu = \mu^Q \mbox{ for some } Q \mbox{ with } H(Q | R) < + \infty \mbox{ and } \nu^Q = \nu  \Big\}.
\end{equation*}
Hence, it admits a unique solution $\mu^*$.

Finally, let us show that $P^*$ and $Q^*$ are well defined. Once again, their role are symmetric, so we only show the existence of $Q^*$. We already saw that $\mu^*$ is well defined. By definition of the latter, there exists $\bar Q$ with $\nu^{\bar Q} = \nu$, $\mu^{\bar Q} = \mu^*$ and $H(\bar Q|R) < + \infty$. So $\Sch(R; \mu^*,\nu)$ consists in minimizing the continuous (on its domain) and strictly convex function $Q \mapsto H(Q|R)$ on the nonempty compact convex set 
\begin{equation*}
\Pi(\mu^*,\nu) \cap \Big\{ Q \in \P(\DD \times \FF) \mbox{ such that } H(Q|R)< + \infty \Big\}.
\end{equation*}
So it admits a unique solution $Q^*$.

\bigskip

\noindent \underline{Step 2}: A formula for $H(Q|R)$, for all $Q \in \Pi(\mu^*,\nu)$ with $H(Q|R) < + \infty$.
  
Recalling $Q^0=R$, we infer from \eqref{eq:induction_P_Q} that for all $(x_i,y_j)\in \mathcal E$ and $n \in \N^*$,
\begin{equation}
\label{eq:explicit_Qn}
Q^n_{ij} = \frac{\nu_j}{\nu^{P^n}_j} \times \frac{\mu_i}{\mu^{Q^{n-1}}_i} \times \dots \times \frac{\nu_j}{\nu^{P^1_j}} \times \frac{\mu_i}{\mu^{Q^0}_i} \times R_{ij}.
\end{equation}
Observe that in the product in the r.h.s., because we assumed that $(x_i,y_j) \in \mathcal E$, the common support of all the iterates of the Sinkhorn algorithm, all the factors are positive. 

In addition, for all $Q \in \Pi(\mu^*,\nu)$ with finite entropy w.r.t.\ $R$, the support of $Q$ is included in~$\mathcal E$. This is because $H(\mu^* | \mu) < + \infty$, and thereby $\mu^*\ll\mu$. Therefore, we deduce that $Q \ll Q^n$.

 So as a consequence of~\eqref{eq:explicit_Qn}, for all $i,j$ in the support of $Q$,
\begin{align*}
\log \frac{Q_{ij}}{R_{ij}} &=  \log \frac{Q_{ij}}{Q^n_{ij}} + \sum_{k=1}^n  \Big\{ \log \frac{\nu_j}{\nu^{P^k}_j} + \log \frac{\mu_i}{\mu^{Q^{k-1}}_i} \Big\}\\
&= \log \frac{Q_{ij}}{Q^n_{ij}}  + \sum_{k=1}^n  \Big\{ \log \frac{\nu_j}{\nu^{P^k}_j} - \log \frac{\mu^*_i}{\mu_i} + \log \frac{\mu^*_i}{\mu^{Q^{k-1}}_i}  \Big\}.
\end{align*}
Let us multiply this equality by $Q_{ij}$, and sum over $i,j$. We get
\begin{equation*}
\sum_{i,j} Q_{ij} \log \frac{Q_{ij}}{R_{ij}} = \sum_{i,j} Q_{ij} \log \frac{Q_{ij}}{Q^n_{ij}} + \sum_{k=1}^n \left\{ \sum_{i,j} Q_{ij}\log \frac{\nu_j}{\nu^{P^k}_j} - \sum_{i,j} Q_{ij}\log \frac{\mu^*_i}{\mu_i} + \sum_{i,j} Q_{ij}\log \frac{\mu^*_i}{\mu^{Q^{k-1}}_i} \right\}.
\end{equation*}
Fix $k$ in $\{1, \dots, n\}$, and consider the term:
\begin{equation*}
    \sum_{i,j} Q_{ij}\log \frac{\nu_j}{\nu^{P^k}_j} = \sum_j \left( \sum_i Q_{ij} \right) \log \frac{\nu_j}{\nu^{P^k}_j}.
\end{equation*}
As the second marginal of $Q$ is $\nu$, we find
\begin{equation*}
    \sum_{i,j} Q_{ij}\log \frac{\nu_j}{\nu^{P^k}_j} = \sum_{j} \nu_j\log \frac{\nu_j}{\nu^{P^k}_j}.
\end{equation*}
As the first marginal of $Q$ is $\mu^*$, we can reason in the same way for the other terms of the same type, and find 
\begin{equation*}
\sum_{i,j} Q_{ij} \log \frac{Q_{ij}}{R_{ij}} = \sum_{i,j} Q_{ij} \log \frac{Q_{ij}}{Q^n_{ij}} + \sum_{k=1}^n \left\{ \sum_{j} \nu_j \log \frac{\nu_j}{\nu^{P^k}_j} - \sum_{i} \mu^*_i\log \frac{\mu^*_i}{\mu_i} + \sum_{i} \mu^*_{i}\log \frac{\mu^*_i}{\mu^{Q^{k-1}}_i} \right\}.
\end{equation*}
We now use the Definition~\ref{def:relative_entropy} of the relative entropy to find that this identity means:
\begin{align*}
 H(Q|R) - \mathsf M(R) + \mathsf M(Q) &= H(Q|Q^n) - \mathsf M(Q^n) + \mathsf M(Q) \\
 &\qquad+\sum_{k=1}^n \Big\{ H(\nu| \nu^{P^k}) - \mathsf M(\nu) + \mathsf M(\nu^{P^k})
    - H(\mu^* | \mu) + \mathsf M(\mu^*) - \mathsf M(\mu) \\
    &\hspace{147pt}+ H(\mu^* | \mu^{Q^{k-1}}) - \mathsf M(\mu^*) + \mathsf M(\mu^{Q^{k-1}}) \Big\},
\end{align*}
or, simplifying the masses $\M(Q)$ and $\M(\mu^*)$ appearing several times,
\begin{align*}
 H(Q|R) - \mathsf M(R) = H(Q|Q^n) - \mathsf M(Q^n) \!+\!\sum_{k=1}^n \!\Big\{ H(\nu| \nu^{P^k}) - \mathsf M(\nu) +& \mathsf M(\nu^{P^k})
    - H(\mu^* | \mu) - \mathsf M(\mu) \\
    &+ H(\mu^* | \mu^{Q^{k-1}})  + \mathsf M(\mu^{Q^{k-1}}) \Big\}.
\end{align*}
Let us check how the masses simplify. By \eqref{eq:mass_identities} as $Q$ and $Q^k$, $k \in \N^*$ admit $\nu$ as their second marginals, they have the same total masses, and it coincides with the one of their first marginals. Namely,
\begin{equation*}
    \forall k \geq 1,\qquad \M(\nu) =\M(Q^k) = \M(Q) = \M(\mu^{Q^k}) =  \M(\mu^*).
\end{equation*}
In the same way,
\begin{equation*}
    \forall k \geq 1,\qquad \M(\mu) =\M(P^k) = \M(\nu^{P^k}).
\end{equation*}
And finally, as $Q^0 = R$, we also have
\begin{equation*}
   \M(R) =\M(Q^0) = \M(\mu^{Q^0}).
\end{equation*}
Coming back to our entropy identity, we can simplify more and get 
\begin{equation}
\label{eq:H(Q|Qn)}
H(Q|R) = H(Q | Q^n) + \sum_{k=1}^n \Big\{ H(\nu | \nu^{P^k}) - H(\mu^* | \mu)\Big\} + \sum_{k=1}^n H(\mu^* | \mu^{Q^{k-1}}).
\end{equation}
(The only subtlety is that for $k = 1$ and for $k=1$ only, $\M(\mu^{Q^{k-1}})$ does not simplify with $\M(\nu)$. But then $\M(\mu^{Q^{k-1}}) = \M(\mu^{Q^{0}})$ simplifies with $\M(R)$, and $\M(\nu)$ simplifies with $\M(Q^n)$.)

Now, we claim that every term in the first sum in the r.h.s.\ is nonnegative, \emph{i.e.}\ that $H(\nu | \nu^{P^k}) \geq H(\mu^* | \mu)$. First, we know that $\nu^{Q^k} = \nu$, so that:
\begin{align*}
H(\nu | \nu^{P^k}) &= \sum_j \nu_j \log \frac{\nu_j}{\nu^{P^k}_j} + \M(\nu^{P^k}) - \M(\nu) \\
&= \sum_{ij} Q^k_{ij} \log \frac{\nu_j}{\nu^{P^k}_j} +\M(\nu^{P^k}) - \M(\nu)\\
&= \sum_{ij} Q^k_{ij} \log \frac{Q^k_{ij}}{P^k_{ij}} + \M(P^k) - \M(Q^k)= H(Q^k|P^k),
\end{align*}
where we used at the third line that from \eqref{eq:induction_P_Q}, we know that for all $j$ and $1 \leq k \leq n$, $\nu_j / \nu^{P^k}_j = Q^k_{ij} / P^k_{ij}$.
Second, by Corollary~\ref{cor:one_step_sinkhorn},
\begin{equation*}
H(\nu | \nu^{P^k}) =  H(Q^k|P^k) \geq H(\mu^{Q^k} | \mu^{P^k}) = H(\mu^{Q^k} | \mu).
\end{equation*}
Finally, $Q^k$ has finite entropy w.r.t.\ $R$ (use for instance~\eqref{eq:explicit_Qn} with $n=k$) and admits $\nu$ as a first marginal. So by optimality of $\mu^*$, $H(\mu^{Q^k} | \mu) \geq H(\mu^*|\mu)$. Our claim follows. 

\bigskip

\noindent\underline{Step 3}: Consequence of~\eqref{eq:H(Q|Qn)}, convergence of the marginals.

As a consequence of Step~2, both sums in the r.h.s.\ of~\eqref{eq:H(Q|Qn)} are bounded sums of nonnegative terms. Therefore, they converge as $n \to + \infty$, and their terms tend to $0$ as $k \to + \infty$. We deduce in particular that
\begin{equation*}
H(\mu^* | \mu^{Q^n}) \underset{n \to + \infty}{\longrightarrow} 0.
\end{equation*}
In particular, by continuity of $H$ w.r.t.\ its second variable as stated in Proposition~\ref{prop:continuity_H}, and by compactness of~$\{ \bar \mu \in \mathcal M_+(\DD) \mbox{ s.t. } \M(\bar \mu) = \M(\nu)\}$, $\mu^{Q^n} \to \mu^*$. So now let us pick $\bar Q$ any limit point of $(Q^n)$. Such a limit point exist by compactness of~$\{ Q \in \mathcal M_+(\DD \times \FF) \mbox{ s.t. } \M(Q) = \M(\nu)\}$. It follows from $\mu^{Q^n} \to \mu^*$ that $\mu^{\bar Q} = \mu^*$.

\bigskip

\noindent \underline{Step 4}: $\bar Q = Q^*$.

Let us show that $\bar Q = Q^*$, so that actually the whole sequence $(Q^n)$ converges towards $Q^*$. On the one hand, passing to the limit $n \to + \infty$ along the subsequences generating $\bar Q$ in~\eqref{eq:H(Q|Qn)} and using the continuity of $H$ w.r.t.\ the second variable as stated in Proposition~\ref{prop:continuity_H}, we find
\begin{equation}
\label{eq:H(Q|barQ)}
H(Q|R) = H(Q |\bar Q) + \sum_{k=1}^{+\infty} \Big\{H(\nu | \nu^{P^k}) - H(\mu^* | \mu) \Big\}+ \sum_{k=1}^{+ \infty} H(\mu^* | \mu^{Q^{k-1}}).
\end{equation}
On the other hand, as for all $n \in \N^*$, $Q^n \ll R$, this is also true for $\bar Q$. In particular, $H(\bar Q | R) < + \infty$, and as $\bar Q \in \Pi(\mu^*,\nu)$, we can apply~\eqref{eq:H(Q|barQ)} with $\bar Q$ in place of $Q$, and find
\begin{equation}
\label{eq:H(barQ|R)}
H(\bar Q|R) = \sum_{k=1}^{+\infty} \Big\{ H(\nu | \nu^{P^k}) - H(\mu^* | \mu) \Big\} + \sum_{k=1}^{+ \infty} H(\mu^* | \mu^{Q^{k-1}}).
\end{equation}
Now it remains to apply~\eqref{eq:H(Q|barQ)} with $Q = Q^*$ and to plug the previous equality to find
\begin{equation*}
H(Q^*|R) = H(Q^* |\bar Q) + H(\bar Q | R).
\end{equation*}
As by optimality of $R^*$, $H(\bar Q | R) \geq H(Q^*|R)$, we can conclude that $H(Q^*|Q) = 0$. Therefore, $\bar Q = Q^*$, as announced.

The proof of $P^n \to P^*$ follows the same lines.
\end{proof}

As a free output of the proof of Theorem~1, we can show that we could have swapped $\mu$ and $\bar \mu$, and $\nu$ and $\bar \nu$ in the definitions~\eqref{eq:def_mu*_nu*} of $\mu^*$ and $\nu^*$ respectively. This is justified in the following remark.
\begin{rem}
	\label{rem:swap_mu_mu*}
	Observe the following optimization problem, where $R$, $\mu$ and $\nu$ are given, and where the competitor is $\bar \nu$: 
	\begin{equation}
	\label{eq:other_def_nu*}
	\min \Big\{ H(\nu | \bar \nu) \, \Big|\, \bar \nu = \nu^P, \mbox{ for some } P \mbox{ with } H(P|R) < + \infty \mbox{ and } \mu^P = \mu \Big\}.
	\end{equation}
    This problem is almost the same as the one defining $\nu^*$ in~\eqref{eq:def_mu*_nu*}, except from the fact that $\nu$ and $\bar \nu$ are swapped in the relative entropy. In this remark, we justify that the solution of this problem is $\nu^*$ as well, and that the corresponding optimal value is $H(\mu^*|\mu)$.
	
	Provided there exists a competitor $\bar \nu$ for this problem with $H(\nu |\bar \nu) < +\infty$, we can find $P$ such that $H(P|R)< + \infty$ and $P \in \Pi(\mu, \bar \nu)$, and $Q \in \P(\DD \times \FF)$ defined for all $i,j$ by
	\begin{equation*}
	Q_{ij} := \frac{\nu_j}{\bar\nu_j} P_{ij},
	\end{equation*}
	which is legitimate since $H(\nu | \bar \nu) < + \infty$. We have then $H(Q|R) < + \infty$ and $\nu^Q = \nu$. Hence, using the definition~\eqref{eq:def_mu*_nu*} of $\mu^*$, we have
	\begin{equation*}
	H(\nu | \bar \nu) = H(Q | P) \geq H(\mu^Q | \mu ) \geq H(\mu^* | \mu ),
	\end{equation*}
	where the first equality is a direct computation, and where the first inequality is obtained using Corollary~\ref{cor:one_step_sinkhorn}.
	
	On the other hand, as soon as the assumption of Theorem~\ref{thm:convergence_sinkhorn} holds, $\nu^*$ is a competitor for the problem in~\eqref{eq:other_def_nu*}, and so in particular $H(\nu | \nu^*) \geq H(\mu^*|\mu)$. But because the terms of the first series in~\eqref{eq:H(Q|barQ)} tend to $0$ and $\nu^{P^k} \to \nu^*$, we conclude that actually, $H(\nu | \nu^*) = H(\mu^*|\mu)$ and $\nu^*$ is a solution of~\eqref{eq:other_def_nu*}. Finally, it is easy to see that a solution $\bar \nu$ of~\eqref{eq:other_def_nu*} must satisfy $\bar \nu \ll \nu$ (because conditioning on the support of $\nu$ reduces the entropy), and by strict convexity of $\bar \nu \mapsto H(\nu | \bar \nu)$ on the set $\{ \bar \nu \ll \nu \}$, under the assumption of Theorem~\ref{thm:convergence_sinkhorn}, the problem~\eqref{eq:other_def_nu*} admits $\nu^*$ as its unique solution, so that~\eqref{eq:other_def_nu*} can be used as an alternative definition of $\nu^*$. 
	
	Of course, we could argue in the same way to provide an alternative definition of $\mu^*$, and we have the following equalities:
	\begin{equation*}
	H(\nu | \nu^*) = H(\mu^* | \mu) \qquad \mbox{and} \qquad H(\mu | \mu^*) = H(\nu^* | \nu).
	\end{equation*}
	In particular, $\mu^* \sim \mu$ and $\nu^* \sim \nu$ in the sense of measures.
\end{rem}

We also give another remark concerning the generalization of Theorem~\ref{thm:convergence_sinkhorn} to Polish spaces.
\begin{rem}
We crucially use the fact that $\DD$ and $\FF$ are finite in order to obtain~\eqref{eq:H(Q|barQ)} and~\eqref{eq:H(barQ|R)}. In the continuous case, as $H$ is not more than lower semicontinuous w.r.t the second variable, identity~\eqref{eq:H(Q|barQ)} becomes an inequality, where $=$ is replaced by $\geq$, which is the good direction for the proof. The difficulty is then to find an equality sign in~\eqref{eq:H(barQ|R)}.
\end{rem}

\section{$\mathbf{\Gamma}$-convergence in the marginal penalization problem}
\label{sec:gamma_cv}

In this section, we want to show that when $R$, $\mu$ and $\nu$ are such that the Schr\"odinger problem $\Sch(R; \mu,\nu)$ has no solution, then the limit points $P^*$ and $Q^*$ given by Theorem~\ref{thm:convergence_sinkhorn} are relevant in view of the possible applications of the Sinkhorn algorithm. 

\bigskip

To do so, let us think of $R$ as an imperfect theoretical model describing the coupling between the initial and final positions of the particles of a large system. Also, let us imagine that $\mu$ and $\nu$ are data obtained by measuring the positions of the particles of the actual system that $R$ is supposed to describe, at the initial and final time. In this situation, if  $\Sch(R; \mu,\nu)$ has a solution $R^*$, this solution is interpreted as the model that is the closest to $R$ that can explain the data. 

However, even when $R$ is a rather good model, and when $\mu$ and $\nu$ are rather precise measurements, it is possible that $\Sch(R; \mu,\nu)$ has no solution for several reasons:

\begin{itemize}
\item The first reason could be that our modeling does not take into account some physical phenomena. For instance, in Subsection~\ref{subsec:unbalanced}, we will consider the case where the true system allows creation or annihilation of mass with very small probability, whereas the modeling does not.

\item Another reason could be that $\mu$ and $\nu$ are only approximations of the real marginals. This can result from imprecise or biased measurements, or from a restricted amount of collected data. This will be considered in Subsection~\ref{subsec:balanced}. 
\end{itemize}

In both cases, it is very natural to relax the marginal constraints in~\eqref{eq:def_Sch} by introducing a fitting term in the value functional, that cancels when the constraints are satisfied, but which remains finite otherwise.

The main result of this section asserts that in these two situations, that are actually very close, the limit points $P^*$ and $Q^*$ of the Sinkhorn algorithm allow to compute the solution of the relaxed problem when the new fitting term takes the form of an entropy, in the limit where the level of marginal penalization tends to $+ \infty$. The second case is a direct consequence of the first one, but that we wanted to keep separated because it does not have the same interpretation.

\subsection{Unbalanced problems}
\label{subsec:unbalanced}
In this subsection, we give ourselves $R \in \mathcal M_+(\DD \times \FF)$, $\mu \in \mathcal M_+(\DD)$ and $\nu \in \mathcal M_+(\FF)$ as before, and we study the following optimization problem, which is a reasonable modification of $\Sch(R; \mu,\nu)$ where the marginal constraints are replaced with marginal penalizations:
\begin{equation}
\label{eq:unbalanced_problem_lambda}
 \min \Big\{ H(\bar R |R) + \lambda \big( H(\mu^{\bar R} | \mu) + H(\nu^{\bar R} | \nu) \big)  \, \Big | \, \bar R \in \mathcal M_+(\DD \times \FF) \Big\},
\end{equation}
where $\lambda>0$ parametrizes the level of penalization. 

This approach is extremely reminiscent of the idea introduced by Liero, Mielke and Savar\'e in~\cite{Liero2018} to deal with unbalanced data, that is, when $\M(\mu) \neq \M(\nu)$, in optimal transport problems. This was the starting point of the theory of \emph{unbalanced optimal transport}, also discovered independently by other teams~\cite{kondratyev2016new,Chizat2018}. 

More precisely, we will study the limit of the problem in~\eqref{eq:unbalanced_problem_lambda} as $\lambda \to + \infty$. In this limit, it is actually more convenient to call $\eps = 1/\lambda$ and to multiply the value functional by $\eps$, to find the problem that we call $\Sch^\eps(R; \mu,\nu)$:
\begin{equation*}
\Sch^\eps (R; \mu,\nu) := \min \Big\{ \eps H(\bar R |R) +  H(\mu^{\bar R} | \mu) + H(\nu^{\bar R} | \nu)  \, \Big | \, \bar R \in \mathcal M_+(\DD \times \FF) \Big\}.
\end{equation*}
As we want to study the behavior of this problem in the limit $\eps \to 0$, we define the following functionals:
\begin{align*}
\Lambda^\eps : \bar R \in \mathcal M_+(\DD \times \FF) &\mapsto \eps H(\bar R |R) +  H(\mu^{\bar R} | \mu) + H(\nu^{\bar R} | \nu), \\
\Lambda:  \bar R\in \mathcal M_+(\DD \times \FF) &\mapsto  \chi_{H(\bar R | R)< + \infty} + H(\mu^{\bar R} | \mu) + H(\nu^{\bar R} | \nu),
\end{align*}
where $\chi_{H(\bar R | R)< + \infty}$ is the convex indicatrix taking value $0$ on the set 
\begin{equation*}
\Big\{ \bar R \in \mathcal M_+(\DD \times \FF) \mbox{ such that } H(\bar R | R) < + \infty\Big\},
\end{equation*}
and $+\infty$ elsewhere.

The following proposition follows from standard arguments in the theory of $\Gamma$-convergence, see for instance~\cite[Theorem~1.47]{braides2002gamma}, and from the strict convexity of the relative entropy w.r.t.\ its first variable. We omit the proof.
 \begin{prop}
 	\label{prop:gamma_cv}
 	We have:
 	\begin{equation*}
 	\Gamma-\lim_{\eps \to 0} \Lambda^\eps = \Lambda.
 	\end{equation*}
 	
 	In particular, assuming that $\Lambda$ is not uniformly infinite, let us call $R_{\mathrm{opt}}$ one of its minimizers, $\mu^g := \mu^{R_{\mathrm{opt}}}$ and $\nu^g := \nu^{R_{\mathrm{opt}}}$. The marginals $\mu^g$ and $\nu^g$ do not depend on the choice of $R_{\mathrm{opt}}$, and as $\eps \to 0$, the unique solution $R^\eps$ of $\Sch^\eps(R; \mu,\nu)$ exists and converges towards the solution of $\Sch(R; \mu^g, \nu^g)$.
 \end{prop}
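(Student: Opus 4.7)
The plan is to split the proposition into three blocks: (i) the $\Gamma$-convergence, (ii) existence/uniqueness of $R^\eps$ and uniqueness of $(\mu^g,\nu^g)$, (iii) convergence of $R^\eps$ to the specific limit $R^*:=$ solution of $\Sch(R;\mu^g,\nu^g)$.

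For (i), I would establish the two standard inequalities. The $\liminf$ inequality is immediate: if $\bar R^\eps \to \bar R$, then by lower semicontinuity of $H$ w.r.t.\ the first variable (\cref{prop:continuity_H}) and by continuity of the marginal map,
\[
\liminf_{\eps \to 0} \Lambda^\eps(\bar R^\eps) \geq \liminf_{\eps \to 0} H(\mu^{\bar R^\eps}|\mu) + \liminf_{\eps \to 0} H(\nu^{\bar R^\eps}|\nu) \geq H(\mu^{\bar R}|\mu) + H(\nu^{\bar R}|\nu),
\]
and if $\liminf \Lambda^\eps(\bar R^\eps) < + \infty$ then $\eps H(\bar R^\eps | R)$ stays bounded, hence $H(\bar R|R) < + \infty$ by lsc, which picks up the indicatrix. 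For the recovery sequence I would simply take the constant one $\bar R^\eps \equiv \bar R$: if $\Lambda(\bar R) = + \infty$ there is nothing to prove, and otherwise $H(\bar R | R) < + \infty$, so $\eps H(\bar R | R) \to 0$ and $\Lambda^\eps(\bar R) \to \Lambda(\bar R)$.

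For (ii), existence and uniqueness of $R^\eps$ follow from the strict convexity, lsc, and coercivity of $\Lambda^\eps$, the last being a consequence of the fact that the entropy controls the mass. Uniqueness of $(\mu^g,\nu^g)$ is the key observation here: the functional $(\bar \mu, \bar \nu) \mapsto H(\bar \mu | \mu) + H(\bar \nu | \nu)$ is strictly convex, and the set of pairs $(\mu^{\bar R}, \nu^{\bar R})$ arising from some $\bar R$ with $H(\bar R | R) < + \infty$ is convex (being the image under a linear map of the convex set $\{\bar R \ll R\}$). So any two minimizers of $\Lambda$ must share marginals, which are therefore well defined and denoted $\mu^g$ and $\nu^g$.

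For (iii), the main point is to select $R^*$ among the (possibly many) minimizers of $\Lambda$ supported on $\{\bar R \in \Pi(\mu^g,\nu^g),\ H(\bar R | R) < + \infty\}$. I would use $R^*$ as a test competitor in $\Lambda^\eps(R^\eps) \leq \Lambda^\eps(R^*)$, which rearranges to
\[
\eps H(R^\eps | R) + \Big[ H(\mu^{R^\eps}|\mu) + H(\nu^{R^\eps}|\nu) - H(\mu^g|\mu) - H(\nu^g|\nu) \Big] \leq \eps H(R^*|R).
\]
By the minimality of $(\mu^g, \nu^g)$ established in (ii), the bracket is nonnegative as long as $H(R^\eps | R) < + \infty$, and therefore each term is controlled: $H(R^\eps | R) \leq H(R^*|R)$ and $H(\mu^{R^\eps}|\mu) + H(\nu^{R^\eps}|\nu) \to H(\mu^g|\mu) + H(\nu^g|\nu)$. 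Compactness gives a subsequential limit $\bar R$; lsc applied to the marginal terms forces $\mu^{\bar R} = \mu^g$ and $\nu^{\bar R} = \nu^g$ (by strict convexity of the marginal entropies), while lsc of $H(\cdot | R)$ gives $H(\bar R | R) \leq H(R^* | R)$. Thus $\bar R$ is a competitor of $\Sch(R; \mu^g, \nu^g)$ achieving its minimum, so $\bar R = R^*$ by uniqueness, and since the limit is independent of the subsequence, the whole family $R^\eps$ converges. The only subtlety to watch is that the bracket inequality genuinely uses the optimality of $(\mu^g, \nu^g)$ over \emph{admissible} marginals, which is why the indicatrix in $\Lambda$ matters.
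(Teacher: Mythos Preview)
Your proof is essentially correct and in fact supplies what the paper deliberately leaves out: the authors omit the proof entirely, citing standard monotone $\Gamma$-convergence (Braides, Theorem~1.47) together with strict convexity of the relative entropy in its first variable. Your direct verification of the $\liminf$/$\limsup$ inequalities in (i), the strict-convexity argument on marginals in (ii), and in particular your selection argument in (iii)---comparing $R^\eps$ with $R^*$ to obtain $H(R^\eps|R)\le H(R^*|R)$, so that any limit point must be the \emph{Schr\"odinger} minimizer among $\Pi(\mu^g,\nu^g)$ rather than an arbitrary minimizer of $\Lambda$---are precisely the ingredients that make the cited black box work here.

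One genuine but easily repaired gap in (i): from $\liminf_\eps \Lambda^\eps(\bar R^\eps)<+\infty$ you only obtain that $\eps\, H(\bar R^\eps|R)$ stays bounded, i.e.\ $H(\bar R^\eps|R)\le C/\eps$, which does \emph{not} bound $H(\bar R^\eps|R)$; lower semicontinuity of $H(\cdot|R)$ therefore cannot by itself force $H(\bar R|R)<+\infty$. The correct argument in the finite setting is different and simpler: each $\bar R^\eps$ has $H(\bar R^\eps|R)<+\infty$, hence $\bar R^\eps\ll R$, and the domain $\{\bar R\ll R\}$ is closed by \cref{prop:continuity_H}, so the limit $\bar R$ also lies in it and the indicatrix vanishes. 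This is exactly where the finite-space hypothesis is used; in a general Polish setting your phrasing would fail and one would instead invoke the monotonicity $\Lambda^{\eps'}\le\Lambda^\eps$ for $\eps'\le\eps$ to get the $\Gamma$-$\liminf$ from the pointwise one at a fixed~$\eps'$.
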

 
 \begin{rem}
 In the notations $\mu^g$ and $\nu^g$, the $g$ stands for \emph{geometric}. This is because as shown in Theorem~\ref{thm:link_P*_Q*_R*}, $\mu^g$ and $\nu^g$ are respectively the componentwise geometric means of $\mu$ and $\mu^*$, and of $\nu$ and $\nu^*$.
 \end{rem}

Therefore, studying the behavior of $\Sch^\eps(R; \mu,\nu)$ in the limit $\eps \to 0$ reduces to the study of the Schr\"odinger problem with modified marginals $\mu^g$ and $\nu^g$. The following theorem shows the link between $R^*$ -- the solution of $\Sch(R; \mu^g, \nu^g)$ -- on the one hand, and $P^*$ and $Q^*$ from Theorem~\ref{thm:convergence_sinkhorn} on the other hand.
\begin{theo}
	\label{thm:link_P*_Q*_R*}
	Let $R \in \mathcal M_+(\DD \times \FF)$, $\mu \in \mathcal M_+(\DD)$ and $\nu \in \mathcal M_+(\FF)$ satisfy Assumption~\ref{ass:condition_sinkhorn_well_defined}. Then the functional $\Lambda$ is not uniformly infinite. Moreover, considering $P^*$ and $Q^*$ as given by Theorem~\ref{thm:convergence_sinkhorn}, and $\mu^g$ and $\nu^g$ as given by Proposition~\ref{prop:gamma_cv}, the solution of $\Sch(R; \mu^g, \nu^g)$ is the componentwise geometric mean of $P^*$ and $Q^*$, that is, the matrix $R^*$ defined for all $i,j$ by
	\begin{equation}
	\label{eq:formula_R*}
	R^*_{ij} := \sqrt{P^*_{ij} Q^*_{ij}}.
	\end{equation}
	
	Also, if $\mu^*$ and $\nu^*$ are defined by~\eqref{eq:def_mu*_nu*}, $\mu^g$ and $\nu^g$ are the componentwise geometric means of $\mu^*$ and $\mu$ for the first one, and of $\nu^*$ and $\nu$ for the second one. In other terms, we have for all $i,j$,
    \begin{equation}
	\label{eq:bar_mu_bar_nu}
	\mu^g_i = \sqrt{\mu^*_i \mu_i} \qquad \mbox{and} \qquad \nu^g_j = \sqrt{\nu^*_j \nu_j}.
	\end{equation}
\end{theo}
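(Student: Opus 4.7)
The plan is to produce an explicit candidate minimizer of $\Lambda$, namely $R^* := \sqrt{P^* Q^*}$ (componentwise), show that its marginals are $\bigl(\sqrt{\mu \mu^*}, \sqrt{\nu \nu^*}\bigr)$, and then verify that it satisfies the first-order optimality conditions for the convex problem $\min \Lambda$. This will simultaneously give non-triviality of $\Lambda$, the marginal formula~\eqref{eq:bar_mu_bar_nu} via Proposition~\ref{prop:gamma_cv}, and the geometric mean formula~\eqref{eq:formula_R*} via uniqueness of the solution to the Schr\"odinger problem.

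The very first point, that $\Lambda$ is not uniformly infinite, is immediate: $P^*$ is a valid competitor since $H(P^*|R) < +\infty$, $H(\mu^{P^*}|\mu) = H(\mu|\mu) = 0$, and $H(\nu^{P^*}|\nu) = H(\nu^*|\nu) = H(\mu|\mu^*) < +\infty$ by Remark~\ref{rem:swap_mu_mu*}. The core of the proof then rests on the following structural lemma: on the common support $\mathcal S := \mathrm{supp}(P^*) \cap \mathrm{supp}(Q^*)$, the ratio $P^*_{ij}/Q^*_{ij}$ is constant on each connected component of $\mathcal S$ viewed as a bipartite graph. To prove this I would exploit the explicit form~\eqref{eq:computable Sinkhorn}, from which $P^{n+1}_{ij}/Q^n_{ij} = a^{n+1}_i / a^n_i$ depends only on $i$ and $Q^{n+1}_{ij}/P^{n+1}_{ij} = b^{n+1}_j / b^n_j$ depends only on $j$; passing to the limit on $\mathcal S$ (where all quantities stay bounded away from $0$) shows that $P^*/Q^*$ is simultaneously a function of $i$ alone and a function of $j$ alone, hence constant equal to some $c_C > 0$ on each connected component $C$ of $\mathcal S$. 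The relation $Q^n_{ij} = (\nu_j/\nu^{P^n}_j)\, P^n_{ij}$ combined with $\nu^* \sim \nu$ from Remark~\ref{rem:swap_mu_mu*} also forces $\mathrm{supp}(P^*) = \mathrm{supp}(Q^*) = \mathcal S$. Summing the identity $P^*_{ij} = c_C Q^*_{ij}$ over the columns (respectively rows) of $C$ then yields $c_C = \mu_i/\mu^*_i = \nu^*_j/\nu_j$ for $(i,j) \in C$, and in particular the pointwise identity $\mu^*_i \nu^*_j = \mu_i \nu_j$ on the whole of $\mathcal S$.

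Given this lemma, computing the marginals of $R^*$ becomes a direct application of the Cauchy--Schwarz equality case: on $\mathcal S$, $\sqrt{P^*_{ij} Q^*_{ij}} = \sqrt{c_C}\, Q^*_{ij}$, so row-summing gives $\mu^{R^*}_i = \sqrt{c_C}\, \mu^*_i = \sqrt{\mu_i \mu^*_i}$, and analogously for the column marginals $\nu^{R^*}_j = \sqrt{\nu_j \nu^*_j}$. Moreover, since $P^*$ and $Q^*$ both have separable form $a_i b_j R_{ij}$ and $c_i d_j R_{ij}$ on $\mathcal S$, so does $R^*$, namely $R^*_{ij} = \sqrt{a_i c_i}\sqrt{b_j d_j}\, R_{ij}$; by uniqueness of the Schr\"odinger solution, $R^*$ is then the unique solution of $\Sch(R; \mu^{R^*}, \nu^{R^*})$.

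It remains to identify $(\mu^{R^*}, \nu^{R^*})$ with $(\mu^g, \nu^g)$, and for this I would verify that $R^*$ satisfies the KKT conditions for the convex problem $\min_{\bar R \ll R} \Lambda(\bar R)$. The partial derivative of $\Lambda$ with respect to $\bar R_{ij}$ is $\log(\mu^{\bar R}_i/\mu_i) + \log(\nu^{\bar R}_j/\nu_j)$, which at $R^*$ simplifies to $\tfrac12 \log\bigl(\mu^*_i \nu^*_j/(\mu_i \nu_j)\bigr)$. On $\mathcal S = \mathrm{supp}(R^*)$ this vanishes by the pointwise identity established above, whereas on $\mathrm{supp}(R) \setminus \mathcal S$ it must be nonnegative, i.e.\ $\mu^*_i \nu^*_j \geq \mu_i \nu_j$. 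This last inequality is in my view the main obstacle of the proof; I expect it to follow from the optimality conditions characterizing $\mu^*$ and $\nu^*$ (captured in Proposition~\ref{prop:optimality_conditions_mu*_nu*}), which should provide dual potentials with the right signs on and off $\mathcal S$. Once it is proved, $R^*$ is a minimizer of $\Lambda$, so Proposition~\ref{prop:gamma_cv} yields $(\mu^g, \nu^g) = (\sqrt{\mu\mu^*}, \sqrt{\nu\nu^*})$, which is~\eqref{eq:bar_mu_bar_nu}, and combining with the uniqueness statement of the previous paragraph yields~\eqref{eq:formula_R*}.
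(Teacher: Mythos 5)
Most of your plan is sound and close in spirit to the paper's argument: the ratio identity $P^*_{ij}=\frac{\mu_i}{\mu^*_i}Q^*_{ij}=\frac{\nu^*_j}{\nu_j}P^*_{ij}\cdot\frac{\mu_i}{\mu^*_i}$ on $\mathcal S$ (which the paper gets directly by passing to the limit in~\eqref{eq:induction_P_Q}, using $\mu^{Q^n}\to\mu^*$ and $\nu^{P^n}\to\nu^*$, rather than via the quotient of potentials), the resulting marginal computation $\mu^{R^*}_i=\sqrt{\mu_i\mu^*_i}$, and the verification that $R^*$ minimizes $\Lambda$ through the sign condition $\log\frac{\mu^*_i}{\mu_i}+\log\frac{\nu^*_j}{\nu_j}=0$ on $\mathcal S$ and $\geq 0$ on $\mathcal E$. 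The inequality you flag as the ``main obstacle'' is exactly the second line of~\eqref{eq:optimality_conditions_mu*_nu*}, which Proposition~\ref{prop:optimality_conditions_mu*_nu*} proves by perturbing $Q^*$ inside the constraint $\nu^Q=\nu$; citing it is legitimate, and your KKT computation is then the pointwise version of the paper's global Fenchel inequality~\eqref{eq:legendre_transform_entropy}. (One small caveat: the sign condition is only available on $\mathcal E$, not on all of $\mathrm{supp}(R)$, but any $\bar R$ with $\Lambda(\bar R)<+\infty$ is supported in $\mathcal E$, so this is harmless.)

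The genuine gap is your last step: you conclude that $R^*$ solves $\Sch(R;\mu^g,\nu^g)$ because it has the separable form $a_ib_jR_{ij}$ \emph{on $\mathcal S$} and has the right marginals, invoking ``uniqueness of the Schr\"odinger solution.'' Separability on one's own support plus correct marginals does \emph{not} characterize the Schr\"odinger solution when that support is strictly smaller than the support of $R$ --- which is precisely the approximately scalable regime you are in. For instance, with $R$ the $2\times2$ all-ones matrix and $\mu=\nu=(1,1)$, the identity matrix has separable density on its support and the right marginals, yet the solution is the constant-$\tfrac12$ matrix. What is missing is an argument excluding competitors $\bar R\in\Pi(\mu^g,\nu^g)$, $\bar R\ll R$, that charge $\mathcal E\setminus\mathcal S$; the naive duality bound fails because the would-be potentials must take the value $-\infty$ off $\mathcal S$. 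The paper closes this by exploiting that $P^*$ is, by its very definition~\eqref{eq:def_P*_Q*}, the solution of $\Sch(R;\mu,\nu^*)$: perturbing $P^\eps:=P^*+\eps(\bar R-R^*)$ (admissible since $R^*\ll P^*$ and $P^\eps\in\Pi(\mu,\nu^*)$) and differentiating at $\eps=0$ yields $H(R^*|R)+H(\bar R|P^*)-H(R^*|P^*)\leq H(\bar R|R)$, after which one only needs the easy fact that $R^*$ solves $\Sch(P^*;\mu^g,\nu^g)$ (here the density $R^*/P^*=e^{-\psi_j/2}$ \emph{is} finite and separable on all of $\mathrm{supp}(P^*)$, so the standard duality argument applies). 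You need to supply an argument of this kind; without it the final identification~\eqref{eq:formula_R*} is unproved.
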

\begin{rem}
\begin{itemize}
    \item Having in mind the approach of~\cite{Liero2018}, we can give the following interpretation of the matrix $R^*$: In the degenerate case where the Schr\"odinger problem has no solution, it is necessary to allow creation and annihilation of mass to find solutions. Following~\cite{Liero2018}, we can do this by replacing the balanced problem $\Sch(R; \mu,\nu)$ by the unbalanced problem $\Sch^\eps(R; \mu,\nu)$. Following this analogy, $\lambda = \frac{1}{\eps}$ parametrizes the cost of creating particles. The matrix $R^*$ from Theorem~\ref{thm:link_P*_Q*_R*} is therefore the limit of these solutions when the cost of creating or destroying matter tends to $+\infty$. 
    \item A small adaptation of the proof shows that given $\alpha \in [0,1]$, if we replace the problem in~\eqref{eq:unbalanced_problem_lambda} by
    \begin{equation*}
 \min \Big\{ H(\bar R |R) + \lambda \Big( (1-\alpha) H(\mu^{\bar R} | \mu) + \alpha H(\nu^{\bar R} | \nu) \Big)  \, \Big | \, \bar R \in \mathcal M_+(\DD \times \FF) \Big\},
\end{equation*}
and if we call $R^{\alpha,\lambda}$ its solution, then as $\lambda \to + \infty$, we have for all $i,j$:
\begin{equation*}
    R^{\alpha,\lambda}_{ij} \underset{\lambda \to +\infty}{\longrightarrow} \big(P^*_{ij}\big)^{1-\alpha}\big( Q^*_{ij}\big)^{\alpha}.
\end{equation*}
    \end{itemize}
\end{rem}

To prove this theorem, we will need to study carefully the optimality conditions for $\mu^*$ and $\nu^*$. This could be done writing the Karush-Kuhn-Tucker conditions for the corresponding optimalization problems. We will rather adopt a more hand by hand approach, that is more likely to be generalizable in the continuous case. This is done in the following proposition.

\begin{prop}
	\label{prop:optimality_conditions_mu*_nu*}
	Assume that the conditions of Theorem~\ref{thm:convergence_sinkhorn} are fulfilled. For all $i,j$, we have
	\begin{equation}
	\label{eq:optimal_P*_Q*}
	P^*_{ij} = \frac{\mu_i}{\mu^*_i} Q^*_{ij} \qquad \mbox{and} \qquad Q^*_{ij} = \frac{\nu_j}{\nu^*_j} P^*_{ij},
	\end{equation}
	with convention $\frac{0}{0} = 0$. In particular, $P^*$ and $Q^*$ are equivalent, and we call $\mathcal S$ their common support. Also, recall the definition of $\mathcal E$ in~\eqref{eq:def_E}. Of course $\mathcal S \subset \mathcal E$.
	Finally, we call for all $i,j$ 
	\begin{equation}
	\label{eq:def_phi_psi}
	\varphi_i := \log \frac{\mu^*_i}{\mu_i} \qquad \mbox{and} \qquad \psi_j := \log \frac{\nu^*_j}{\nu_j}. 
	\end{equation}
	
	For all $(i,j) \in \mathcal E$, $\varphi_i$ and $\psi_j$ are well defined in $\R$, and:
	\begin{equation}
	\label{eq:optimality_conditions_mu*_nu*}
	\left\{  
	\begin{aligned}
	\varphi_i + \psi_j &= 0, &&\mbox{if }(i,j) \in \mathcal S,\\
	\varphi_i + \psi_j &\geq 0, &&\mbox{if }(i,j) \in \mathcal E.	 
	\end{aligned}
	\right.
	\end{equation}
\end{prop}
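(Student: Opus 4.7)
The plan is to pass to the limit in the Sinkhorn recursion to obtain the primary identities~\eqref{eq:optimal_P*_Q*}, to extract the common support $\mathcal{S}$ and the equality $\varphi_i+\psi_j=0$ on $\mathcal{S}$ by a direct logarithmic manipulation, and finally to establish the inequality on $\mathcal{E}\setminus\mathcal{S}$ via a one-sided first-order variation of the minimization problem defining $\nu^*$.

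First I would invoke the Sinkhorn recursion~\eqref{eq:induction_P_Q}. Theorem~\ref{thm:convergence_sinkhorn} gives $P^n\to P^*$, $Q^n\to Q^*$, and hence $\mu^{Q^n}\to\mu^*$, $\nu^{P^n}\to\nu^*$. Remark~\ref{rem:swap_mu_mu*} provides $\mu^*\sim\mu$ and $\nu^*\sim\nu$, so with the convention $0/0=0$ the ratios $\mu_i/\mu^{Q^n}_i$ and $\nu_j/\nu^{P^n}_j$ converge entrywise to $\mu_i/\mu^*_i$ and $\nu_j/\nu^*_j$. Passing to the limit in~\eqref{eq:induction_P_Q} yields~\eqref{eq:optimal_P*_Q*}. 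These identities, combined with the equivalences, imply that $P^*_{ij}>0\Leftrightarrow Q^*_{ij}>0$, so $P^*$ and $Q^*$ share a common support $\mathcal{S}$; the inclusion $\mathcal{S}\subset\mathcal{E}$ is then immediate from $Q^*\ll R$, $\mu^{Q^*}=\mu^*\sim\mu$ and $\nu^{Q^*}=\nu$. On $\mathcal{E}$ the four quantities $\mu_i,\mu^*_i,\nu_j,\nu^*_j$ are strictly positive, so $\varphi_i,\psi_j\in\R$; taking logarithms in~\eqref{eq:optimal_P*_Q*} on $\mathcal{S}$ gives $\log(Q^*_{ij}/P^*_{ij})=-\varphi_i$ and $\log(P^*_{ij}/Q^*_{ij})=-\psi_j$, whose sum is $\varphi_i+\psi_j=0$.

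The main obstacle is the inequality on $\mathcal{E}\setminus\mathcal{S}$. The idea is a one-sided variational argument: for $(i_0,j_0)\in\mathcal{E}\setminus\mathcal{S}$, choose $j_1$ such that $(i_0,j_1)\in\mathcal{S}$, which exists because $\sum_j P^*_{i_0 j}=\mu_{i_0}>0$, and consider for small $\epsilon>0$ the perturbation
$$P^\epsilon \;:=\; P^* + \epsilon\,\bigl(\1_{\{(i_0,j_0)\}}-\1_{\{(i_0,j_1)\}}\bigr).$$
The checks $P^\epsilon\geq 0$ (using $P^*_{i_0 j_1}>0$), $P^\epsilon\ll R$ (using $R_{i_0 j_0}>0$) and $\mu^{P^\epsilon}=\mu$ are direct, so $\nu^{P^\epsilon}$ is admissible in the problem~\eqref{eq:def_mu*_nu*} defining $\nu^*$. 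Optimality of $\nu^*$ therefore gives $H(\nu^{P^\epsilon}|\nu)\geq H(\nu^*|\nu)$, and differentiating at $\epsilon=0^+$ produces
$$0\;\leq\; \partial_\epsilon H(\nu^{P^\epsilon}|\nu)\big|_{\epsilon=0^+}\;=\;\psi_{j_0}-\psi_{j_1}\;=\;\psi_{j_0}+\varphi_{i_0},$$
the last equality coming from the equality case $\varphi_{i_0}+\psi_{j_1}=0$ already established on $\mathcal{S}$. The delicate point, and what I expect to require the most care, is that only the one-sided derivative is available: the direction $\epsilon<0$ is forbidden precisely because $P^*_{i_0 j_0}=0$, so the perturbation must be carefully constructed to keep the $\mu$-marginal intact while probing the admissible set from the correct side.
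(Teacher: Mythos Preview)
Your proof is correct and essentially identical to the paper's. The only difference is a symmetric choice in the variational step: the paper perturbs $Q^*$ along a column (picking $i'$ with $(i',j)\in\mathcal S$ and invoking the optimality of $\mu^*$), while you perturb $P^*$ along a row (picking $j_1$ with $(i_0,j_1)\in\mathcal S$ and invoking the optimality of $\nu^*$)---these are mirror images yielding the same inequality.
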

\begin{proof}[Proof of Proposition~\ref{prop:optimality_conditions_mu*_nu*}]
	To get~\eqref{eq:optimal_P*_Q*}, it suffices to let $n$ tend to $+\infty$ in~\eqref{eq:induction_P_Q}. The fact that $\mathcal S \subset \mathcal E$ relies on the closed property of $P^n$ and $Q^n$ defined in~\eqref{eq:sinkhorn} to have its support included in $\mathcal E$ for $n\geq 2$. If $(i,j) \in \mathcal E$, let us check that $\varphi_i$ and $\psi_j$ are well defined. On the one hand, by definition of $\mathcal E$, $i$ is in the support of $\mu$ and $j$ is in the support of $\nu$. On the other hand, as observed in Remark~\ref{rem:swap_mu_mu*}, $\mu^* \sim \mu$ and $\nu^* \sim \nu$. Our claim follows.
	
	Now, let $(i,j) \in \mathcal S$. A consequence of~\eqref{eq:optimal_P*_Q*} is
	\begin{equation*}
	P^*_{ij} = \frac{\mu^*_i}{\mu_i}\frac{\nu^*_j}{\nu_j}P^*_{ij} =\exp(\varphi_i + \psi_j) P^*_{ij}. 
	\end{equation*} 
	As $(i,j)$ is in the support of $P^*$ by definition of $\mathcal S$, we conclude that $\varphi_i + \psi_j = 0$.
	
	Finally, it remains to prove that for all $(i,j) \in \mathcal E$, $\varphi_i + \psi_j \geq 0$. For this we use the optimality of $H(\mu^*|\mu) = H(\mu^{Q^*} | \mu)$ over all $Q$ satisfying $\nu^Q = \nu$. So let us take $(i,j) \in \mathcal E$. As $\nu_j>0$, there exists $i'$ such that $(i',j) \in \mathcal S$, that is, such that $Q^*_{i'j}>0$. Let us define for $\eps>0$
	\begin{equation*}
	Q^\eps = Q^* + \eps \delta_{ij} - \eps \delta_{i'j},
	\end{equation*}
	where $\delta_{ij}$ is the matrix whose only nonzero coefficient is a one at position $(i,j)$, and similarly for $\delta_{i'j}$. If $\eps$ is sufficiently small, $Q^\eps \in \mathcal M_+(\DD \times \FF)$, $\nu^{Q^\eps} = \nu$ and with obvious notations, $\mu^{Q^\eps} = \mu^* + \eps \delta_i - \eps \delta_{i'}$.
	Therefore, for such $\eps$,
	\begin{equation*}
	H(\mu^{Q^\eps} | \mu ) \geq H(\mu^*| \mu).
	\end{equation*}
	derivating to the right this inequality at $\eps = 0$, we find
	\begin{equation*}
	\log\frac{\mu_i^*}{\mu_i} - \log \frac{\mu_{i'}^*}{\mu_{i'}} \geq 0,
	\end{equation*}
	which rewrites $\varphi_i - \varphi_{i'} \geq 0$. But $(i',j) \in \mathcal S$ so $\varphi_{i'} = - \psi_j$, and so $\varphi_i + \psi_j \geq 0$.
\end{proof}

With this proposition at hand, we can prove Theorem~\ref{thm:link_P*_Q*_R*}.
\begin{proof}[Proof of Theorem~\ref{thm:link_P*_Q*_R*}]
	The fact that under Assumption~\ref{ass:condition_sinkhorn_well_defined}, $\Lambda$ is not uniformly infinite follows from observing that $\Lambda(R^0)< + \infty$, where $R_0$ was defined Assumption~\ref{ass:condition_sinkhorn_well_defined}. Now we reason in two steps. First we will prove using Proposition~\ref{prop:optimality_conditions_mu*_nu*} that $R^*$ defined by~\eqref{eq:formula_R*} is an optimizer of $\Lambda$, and then that it is the solution of the Schr\"odinger problem between its marginals.
	
	\bigskip
	
	\noindent \underline{Step 1}: $R^*$ is an optimizer of $\Lambda$.
	
	To see that $R^*$ is an optimizer of $\Lambda$, we first give a formula relating the vectors $\varphi$ and $\psi$ as defined by formula~\eqref{eq:def_phi_psi} and the marginals $\mu^{R^*}$ and $\nu^{R^*}$ of $R^*$. Using~\eqref{eq:optimal_P*_Q*} and the definition~\eqref{eq:formula_R*} of $R^*$, we see that for all $i,j$,
	\begin{equation}
		\label{eq:RN_R*_P*_Q*}
	R^*_{ij} =  \sqrt{\frac{\nu_j}{\nu^*_j}} P^*_{ij} =  \sqrt{\frac{\mu_i}{\mu^*_i}} Q^*_{ij}.
	\end{equation}
	Summing respectively these identities w.r.t.\ $i$ and $j$, we deduce that for all $i,j$,
	\begin{equation*}
	\mu^{R^*}_i = \sqrt{\mu_i^* \mu_i} =\sqrt\frac{\mu^*_i}{\mu_i} \mu_i \qquad \mbox{and} \qquad \nu^{R^*}_j =  \sqrt{\nu_j^* \nu_j} = \sqrt\frac{\nu^*_j}{\nu_j} \nu_j.
	\end{equation*}
	Let us define for all $i,j$:
	\begin{equation*}
	Z^\mu_i := \log \frac{\mu^{R^*}_i}{\mu_i} = \frac{1}{2} \varphi_i  \qquad \mbox{and} \qquad Z^\nu_j := \log \frac{\nu^{R^*}_j}{\nu_j} = \frac{1}{2} \psi_j .
	\end{equation*}
	Note that for all $(x_i,y_j) \in \mathcal E$, $Z^\mu_i$ and $Z^\nu_j$ are well defined in $\R$.
	
	Now let $\bar R$ be such that $\Lambda(\bar R)< + \infty$. Using inequality~\eqref{eq:legendre_transform_entropy} to bound from below each relative entropy, we have
	\begin{align*}
	\Lambda(\bar R) &= H(\mu^{\bar R} | \mu ) + H( \nu^{\bar R} | \nu ) \\
	&\geq \cg Z^\mu, \mu^{\bar R} \cd - \cg e^{Z^\mu} - 1, \mu \cd + \cg Z^\nu, \nu^{\bar R} \cd - \cg e^{Z^\nu} - 1, \nu \cd \\
	&=  \frac{1}{2} \cg \varphi, \mu^{\bar R} \cd + \frac{1}{2} \cg \psi, \nu^{\bar R}\cd - \sum_i \big\{ \mu_i^{R^*} - \mu_i \big\} - \sum_j \big\{ \nu_j^{R^*} - \nu_j \big\}\\
	&= \frac{1}{2}\cg  \varphi \oplus \psi, \bar R \cd  + \M(\mu) + \M(\nu) - 2 \M(R^*),
	\end{align*}
	where $\varphi \oplus \psi$ is the matrix defined for all $i,j$ by $(\varphi \oplus \psi)_{ij} := \varphi_i + \psi_j$. Now, because of the second line of~\eqref{eq:optimality_conditions_mu*_nu*}, as the support of $\bar R$ is easily seen to be a subset of~$\mathcal E$, we get
	\begin{equation*}
	\Lambda(\bar R) \geq \M(\mu) + \M(\nu) - 2 \M(R^*).
	\end{equation*} 
	
On the other hand, by definition of $Z^\mu$ and $Z^\nu$,
	\begin{align*}
	\Lambda(R^*) &= H(\mu^{R^*} | \mu ) + H( \nu^{ R^*} | \nu ) \\
	&= \cg Z^\mu, \mu^{R^*} \cd + \M(\mu) - \M(R^*)+ \cg Z^\nu, \nu^{R^*} \cd + \M(\nu) - \M(R^*)\\
	&= \frac{1}{2}\cg \varphi \oplus \psi, R^* \cd + \M(\mu) + \M(\nu) - 2 \M(R^*).
	\end{align*}
	But now, as the support of $R^*$ is precisely $\mathcal S$, by the first line of ~\eqref{eq:optimality_conditions_mu*_nu*}, we get
	\begin{equation*}
	   \Lambda(R^*) = \M(\mu) + \M(\nu) - 2 \M(R^*).
	\end{equation*}
	We deduce that $\Lambda(\bar R)\geq \Lambda(R^*)$ and $R^*$ is indeed an optimizer of $\Lambda$. In particular, $\mu^g = \mu^{R^*}$ and $\nu^g = \nu^{R^*}$, which proves~\eqref{eq:bar_mu_bar_nu}.
	
	\bigskip
	
	\noindent\underline{Step 2}: $R^*$ is the solution of $\Sch(R;\mu^g, \nu^g)$.
	
	To show that $R^*$ solves the Schr\"odinger problem between its marginals, we consider another $\bar R \in \mathcal M_+(\DD \times \FF)$ such that $\bar R \in \Pi(\mu^g, \nu^g)$ and $H(\bar R | R)< + \infty$. Then, for $\eps>0$, we define
	\begin{equation*}
	P^\eps := P^* + \eps (\bar R - R^*).
	\end{equation*}
	As $R^* \ll P^*$ (see~\eqref{eq:formula_R*}), whenever $\eps$ is sufficiently small, $P^\eps \in \mathcal M_+(\DD \times \FF)$, and in addition, we easily check that $P^\eps \in \Pi(\mu, \nu^*)$. So by definition~\eqref{eq:def_P*_Q*} of $P^*$,
	\begin{equation*}
	H(P^*|R) \leq H(P^\eps | R).
	\end{equation*}
	Derivating this inequality to the right at $\eps = 0$, we find
	\begin{equation*}
	\sum_{ij}R^*_{ij} \log \frac{P^*_{ij}}{R_{ij}} \leq \sum_{ij}\bar R_{ij} \log \frac{P^*_{ij}}{R_{ij}},
	\end{equation*}
	with convention $\frac{0}{0} = 0$, $0 \log 0 = 0$ and $a \log 0 = - \infty$ for all $a>0$. In particular, we deduce that $\bar R \ll P^* \sim R^*$, and our inequality rewrites
	\begin{equation*}
	H(R^* | R) + H(\bar R | P^*) - H(R^* | P^*) \leq H(\bar R | R).
	\end{equation*}
	The last thing to observe is that because of~\eqref{eq:RN_R*_P*_Q*}, $R^*$ is the solution of the Schr\"odinger problem $\Sch(P^*; \mu^g, \nu^g)$: a direct application of~\eqref{eq:legendre_transform_entropy} with $Z_{ij} = \log\frac{R^*_{ij}}{P^*_{ij}} = - \bar \psi_j$ (which is well defined on the support of $P^*$, and so on the support of $\bar R$) provides
	\begin{align*}
	H(\bar R | P^*) &\geq \cg Z, \bar R \cd - \cg e^Z -1, P^*\cd\\
	&= - \cg \bar \psi, \nu^g \cd + \sum_{ij}P^*_{ij} - R^*_{ij} \\
	&= \cg Z, R^*\cd + \sum_{ij}P^*_{ij} - R^*_{ij}  = H(R^* | P^*).
	\end{align*}
	The result follows.
\end{proof}
\begin{rem}
	In Step 2, we used a particular case of the following more general result that is proved in the same way:
\end{rem}
\begin{lem}
	Let $R \in \mathcal M_+(\DD \times \FF)$, $\mu,\mu' \in \mathcal M_+(\DD)$ and $\nu, \nu' \in \mathcal M_+(\FF)$. Assume that $\Sch(R; \mu,\nu)$ admits a solution $P$ and that $\Sch(P; \mu', \nu')$ admits a solution $Q$. Then the unique solution of $\Sch(R; \mu',\nu')$ exists: it is $Q$.
\end{lem}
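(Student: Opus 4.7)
The plan is to directly mimic \underline{Step 2} of the proof of Theorem~\ref{thm:link_P*_Q*_R*}, where the triple $(R,P,Q)$ of the lemma plays precisely the role of $(R,P^*,R^*)$ there. Given an arbitrary competitor $\bar R \in \Pi(\mu',\nu')$ with $H(\bar R|R)<+\infty$, the goal is to establish $H(Q|R)\leq H(\bar R|R)$; uniqueness will then follow from strict convexity of $H(\cdot|R)$ on its domain, as stated in Proposition~\ref{prop:continuity_H}, combined with the trivial observation that $Q$ itself is a competitor (in finite dimension, $Q\ll P\ll R$ forces $H(Q|R)<+\infty$).

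The starting point is the perturbation $P^\eps := P + \eps(\bar R - Q)$. Since $Q$ solves $\Sch(P;\mu',\nu')$ we have $Q\ll P$, so $P^\eps \in \mathcal M_+(\DD\times\FF)$ for all sufficiently small $\eps\geq 0$. Both $\bar R$ and $Q$ share the marginals $(\mu',\nu')$, so $\bar R - Q$ has zero marginals, giving $P^\eps \in \Pi(\mu,\nu)$: it is a competitor for $\Sch(R;\mu,\nu)$. Optimality of $P$ yields $H(P|R)\leq H(P^\eps|R)$, and I would then take the right derivative at $\eps=0$. The total masses $\M(\bar R)=\M(Q)=\M(\mu')$ simplify, producing
\begin{equation*}
\sum_{ij}Q_{ij}\log\frac{P_{ij}}{R_{ij}} \leq \sum_{ij}\bar R_{ij}\log\frac{P_{ij}}{R_{ij}}.
\end{equation*}

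The main obstacle is that $P$ may have a support strictly smaller than that of $R$ (the approximately scalable case), so a priori the right-hand side could contain a $-\infty$ term at some $(i,j)$ with $R_{ij}>0$, $P_{ij}=0$, $\bar R_{ij}>0$. A direct inspection of the incremental quotient at such an entry reveals a contribution $\eps\bar R_{ij}\log(\eps\bar R_{ij}/R_{ij})$ whose right derivative at $\eps=0$ equals $-\infty$; this would contradict the minimality of $P$. Hence no such $(i,j)$ exists, i.e.\ $\bar R\ll P$, and every sum in the inequality is finite.

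Expanding via Definition~\ref{def:relative_entropy} and simplifying the mass terms, the displayed inequality rewrites as
\begin{equation*}
H(Q|R) - H(\bar R|R) \leq H(Q|P) - H(\bar R|P).
\end{equation*}
The right-hand side is nonpositive by optimality of $Q$ in $\Sch(P;\mu',\nu')$, using that $\bar R\in\Pi(\mu',\nu')$ with $H(\bar R|P)<+\infty$. This yields $H(Q|R)\leq H(\bar R|R)$, proving that $Q$ is the (unique, by strict convexity) solution of $\Sch(R;\mu',\nu')$.
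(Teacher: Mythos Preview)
Your proof is correct and follows essentially the same route the paper indicates: the remark preceding the lemma says it ``is proved in the same way'' as Step~2 of Theorem~\ref{thm:link_P*_Q*_R*}, and your argument is exactly that generalization, with $(R,P,Q,\mu,\nu,\mu',\nu')$ playing the role of $(R,P^*,R^*,\mu,\nu^*,\mu^g,\nu^g)$. The only cosmetic difference is that in Step~2 the paper still has to \emph{establish} that $R^*$ solves $\Sch(P^*;\mu^g,\nu^g)$ via the Legendre inequality~\eqref{eq:legendre_transform_entropy}, whereas here this is the hypothesis on $Q$, so you may invoke optimality directly; your handling of the absolute-continuity obstacle $\bar R\ll P$ via the $-\infty$ right derivative is also the same mechanism the paper uses (and is equivalent to Proposition~\ref{prop:R*_has_biggest_support}).
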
 

\subsection{Balanced version}

\label{subsec:balanced}

In the last subsection, we interpreted the fact that $\Sch(R; \mu,\nu)$ has no solution by the fact that our model does not incorporate the ability of the real system to create or destroy mass. In that case, the total mass of $R^*$ is not the same as the one of $\mu$ and $\nu$ in general, even when the latter two coincide. Therefore, $R^*$ cannot be interpreted directly as a joint law for the initial and final positions of the particles. Following the lines of~\cite{Liero2018}, we see that its interpretation is actually rather complicated.

In this subsection, we want to consider the case where the real system under study is truly balanced, that is, no creation of annihilation of mass is possible at all. In this situation, whatever the way we are obtaining the data, $\mu$ and $\nu$ must have the same mass, and up to renormalizing, we can assume that they are probability measures. We want to interpret the fact that $\Sch(R; \mu,\nu)$ has no solution by the fact that $\mu$ and $\nu$ are imperfect measurements of the true marginals, and we want to find a \emph{probability} measure $\bar R^*$ that is entropically close to $R$ while having its marginals entropically close to $\mu$ and $\nu$, that can be interpreted as a joint law.

Therefore, we introduce the following problem that is a slight modification of $\Sch^\eps$ where the competitor $\bar R$ needs to be a probability measure: for all $R \in \P(\DD \times \FF)$, $\mu \in \P(\DD)$ and $\nu \in \P(\FF)$,
\begin{equation*}
\overline{\Sch}{}^\eps (R; \mu,\nu) := \min \Big\{ \eps H(\bar R |R) +  H(\mu^{\bar R} | \mu) + H(\nu^{\bar R} | \nu)  \, \Big | \, \bar R \in \P(\DD \times \FF) \Big\}.
\end{equation*}

The following theorem states the behaviour of this optimization problem as $\eps \to 0$, and is a direct adaptation of Theorem~\ref{thm:link_P*_Q*_R*} to the balanced case.

\begin{theo}
\label{thm:balanced}
    Let $R \in \P(\DD \times \FF)$, $\mu \in \P(\DD)$ and $\nu \in \P(\FF)$ satisfy the conditions of Assumption~\ref{ass:condition_sinkhorn_well_defined}, and call 
    \begin{equation*}
        \mathcal Z := \sum_{ij} \sqrt{P^*_{ij} Q^*_{ij}},
    \end{equation*}
    where $P^*$ and $Q^*$ are given by Theorem~\ref{thm:convergence_sinkhorn}. Then for all $\eps>0$, the solution $\bar R^\eps$ of $\overline{\Sch}{}^\eps(R; \mu, \nu)$ exists, is unique, and satisfies for all $i,j$:
    \begin{equation*}
        \bar R^\eps_{ij} \underset{\eps \to 0}{\longrightarrow} \bar R^*_{ij} := \frac{\sqrt{P^*_{ij} Q^*_{ij}}}{\mathcal Z}.
    \end{equation*}
    
    Its marginals are given for all $i,j$ by
    \begin{equation*}
	\mu^{\bar R^*}_i = \frac{\sqrt{\mu^*_i \mu_i}}{\mathcal Z} \qquad \mbox{and} \qquad \nu^{\bar R^*}_j = \frac{\sqrt{\nu^*_j \nu_j}}{\mathcal Z}.
	\end{equation*}
    \end{theo}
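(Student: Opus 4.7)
The plan is to reduce the balanced problem to the unbalanced one $\Sch^\eps(R;\mu,\nu)$ already handled by Theorem~\ref{thm:link_P*_Q*_R*}, by means of a simple rescaling of the total mass. The key point is that, since everything is normalized ($R \in \P$, $\mu \in \P$, $\nu \in \P$), any competitor for the unbalanced problem decomposes as a positive scalar $\alpha$ times a probability measure $\tilde R$, and the objective factorizes cleanly in this parametrization.

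First I would establish a rescaling identity: for any $\tilde R \in \P(\DD \times \FF)$ and $\alpha > 0$, a direct computation from Definition~\ref{def:relative_entropy} using that $R$, $\mu$, $\nu$ and $\tilde R$ and its marginals are all probability measures, yields
\[
\Lambda^\eps(\alpha \tilde R) \;=\; \alpha\,\overline\Lambda^\eps(\tilde R) \;+\; (2+\eps)\bigl(\alpha \log \alpha + 1 - \alpha\bigr),
\]
where $\overline\Lambda^\eps(\tilde R) := \eps H(\tilde R|R) + H(\mu^{\tilde R}|\mu) + H(\nu^{\tilde R}|\nu)$ is the functional minimized by $\overline{\Sch}{}^\eps(R;\mu,\nu)$.

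Next I would factorize the unbalanced problem by writing any $\bar R \in \mathcal M_+(\DD \times \FF)$ as $\bar R = \alpha \tilde R$ with $\alpha = \M(\bar R)$ and $\tilde R = \bar R/\alpha \in \P(\DD \times \FF)$ (the case $\alpha = 0$ being clearly non-optimal under Assumption~\ref{ass:condition_sinkhorn_well_defined}). Minimizing over $\alpha$ at fixed $\tilde R$ gives the optimal choice $\alpha^*(\tilde R) = \exp\bigl(-\overline\Lambda^\eps(\tilde R)/(2+\eps)\bigr)$, with minimum value $(2+\eps)\bigl(1 - \alpha^*(\tilde R)\bigr)$, a strictly increasing function of $\overline\Lambda^\eps(\tilde R)$. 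Consequently, the unique unbalanced minimizer $R^\eps$ (which exists by Proposition~\ref{prop:gamma_cv}) decomposes as $R^\eps = \alpha^\eps \bar R^\eps$, where $\bar R^\eps \in \P(\DD \times \FF)$ is a minimizer of $\overline\Lambda^\eps$ over $\P(\DD \times \FF)$ and $\alpha^\eps = \M(R^\eps)$. This proves existence and uniqueness of $\bar R^\eps$, together with the identity $\bar R^\eps = R^\eps / \M(R^\eps)$.

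The conclusion then follows by passing to the limit: by Proposition~\ref{prop:gamma_cv} and Theorem~\ref{thm:link_P*_Q*_R*}, $R^\eps \to R^*$ componentwise with $R^*_{ij} = \sqrt{P^*_{ij} Q^*_{ij}}$, so $\M(R^\eps) \to \M(R^*) = \mathcal Z > 0$; dividing yields $\bar R^\eps \to R^*/\mathcal Z = \bar R^*$ as announced. The stated formulas for the marginals of $\bar R^*$ are then obtained by summing the defining identity over one index and invoking~\eqref{eq:bar_mu_bar_nu}. The only mildly delicate point is the factorization step, which requires the rescaling identity to hold globally (with the convention $\alpha\cdot(+\infty) = +\infty$ for $\alpha>0$) and not just at minimizers, but this is routine; everything else reduces to Theorem~\ref{thm:link_P*_Q*_R*}.
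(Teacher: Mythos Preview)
Your proof is correct and follows essentially the same route as the paper: both establish the key identity $\bar R^\eps = R^\eps/\M(R^\eps)$ via the rescaling formula $H(\alpha p\mid q) = \alpha H(p\mid q) + \alpha\log\alpha + 1 - \alpha$ for probability measures $p,q$, and then pass to the limit using Theorem~\ref{thm:link_P*_Q*_R*} and Proposition~\ref{prop:gamma_cv}. The only cosmetic difference is that you carry out the full minimization over the mass parameter $\alpha$ (obtaining $\alpha^*(\tilde R)=\exp(-\overline\Lambda^\eps(\tilde R)/(2+\eps))$ and the monotone reduced value $(2+\eps)(1-\alpha^*)$), whereas the paper simply fixes $\alpha=\M(R^\eps)$ and compares $\Lambda^\eps(\M(R^\eps)R')$ with $\Lambda^\eps(R^\eps)$; your version is slightly more explicit but not materially different.
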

\begin{proof}
Theorem~\ref{thm:balanced} is a direct consequence of Theorem~\ref{thm:link_P*_Q*_R*} once noticed the following fact:
If $R,\mu,\nu$ are as in the statement of the theorem, if $\eps>0$ and if $R^\eps$ is the solution of $\Sch^\eps(R; \mu,\nu)$, then $R^\eps / \mathsf M(R^\eps)$ is the solution of $\overline{\Sch}{}^\eps(R; \mu,\nu)$. To see this, consider $R' \in \P(\DD \times \FF)$. Direct computations imply
\begin{gather*}
    H(R'|R) = \frac{H\Big(\mathsf M (R^\eps) R' \Big| R\Big)}{\mathsf M (R^\eps)} + \log \frac{1}{\mathsf M (R^\eps)} + 1 - \frac{1}{\mathsf M (R^\eps)},\\
    H\bigg( \frac{R^\eps}{\mathsf M (R^\eps)} \bigg| R \bigg) = \frac{H(R^\eps | R)}{\mathsf M (R^\eps)} + \log \frac{1}{\mathsf M (R^\eps)} + 1 - \frac{1}{\mathsf M (R^\eps)}.
\end{gather*}
By optimality of $R^\eps$, $H(\mathsf M (R^\eps) R' | R ) \geq H(R^\eps | R)$, and therefore $H(R'|R) \geq H(R^\eps / \mathsf M(R^\eps) | R)$. Our claims follows, and hence the theorem as $\mathsf M$ is a continuous functional and $\mathcal Z = \mathsf M(R^*)$, where $R^*$ is given by Theorem~\ref{thm:link_P*_Q*_R*}.
\end{proof}

 \section{Existence and support of the solutions to Schr\"odinger problems}
\label{sec:existence_of_a_solution}

In this section, our goal is to give a detailed study of the support of the solution of $\Sch(R; \mu,\nu)$ when the latter exists, or of the common one of $P^*$, $Q^*$ and $R^*$ from Theorems~\ref{thm:convergence_sinkhorn} and~\ref{thm:link_P*_Q*_R*} in the non-scalable case. This study will rely on a new interpretation of the well known existence conditions for the Schr\"odinger problem in finite spaces, for which we refer to~\cite{Brualdi1968,idel2016review}.

\bigskip

We start with our new formulation of these conditions of existence, which is very close to the ones introduced by Brualdi~\cite{Brualdi1968}, but has the advantage of helping understanding the shape of the support of the optimizers seen as a bipartite graph. 

In the second part of the section, we provide a theoretical procedure allowing to get the support of the optimizers, both in the approximately scalable and non-scalable cases, without using the Sinkhorn algorithm. This procedure will be used in the next section as a preliminary step, before launching the Sinkhorn algorithm, in order to recover a linear rate for the latter.

\subsection{A necessary and sufficient condition of existence for the Schr\"odinger problem in finite spaces}
\label{conditions_scalability}

Let us state a necessary and sufficient condition on $R$, $\mu$ and $\nu$ for the existence of a solution $R^*$ of $\Sch(R; \mu, \nu)$, that is, for $\Sch(R;\mu,\nu)$ to be scalable or approximately scalable. In order to do so, we need to give a few definitions. First, we endow the set $\DD \cup \FF$ with a bipartite graph structure related to $R$: we set
\begin{equation*}
\forall i=1, \dots, N \mbox{ and } j = 1, \dots, M, \qquad x_i \triangle y_j \Leftrightarrow R_{ij}>0.
\end{equation*}
We have $x_i \triangle y_j$ whenever it is possible to travel from $x_i$ to $y_j$ under $R$. We write indifferently $x_i \triangle y_j$ or $y_j \triangle x_i$.

With this structure in hand, we are able to push forward or pull backward subsets of $\DD$ and $\FF$, that is, we define: 
\begin{equation}
\label{push forward subset}
\begin{aligned}
&\forall A \subset \DD, & F_R(A) &:= \Big\{ y \in \FF \, | \, \exists x \in A \mbox{ s.t.\ } x \triangle y \Big\}, \\ 
&\forall B \subset \FF, & D_R(B) &:= \Big\{ x \in \DD \, | \, \exists y \in B \mbox{ s.t.\ } x \triangle y \Big\}.
\end{aligned}
\end{equation}
 Heuristically, for all $A \subset \DD$, $F_R(A)$ is the set of all possible final positions of particles starting from $A$, under $R$. Correspondingly, for all $B \subset \FF$, $D_R(B)$ is the set of all possible initial positions of particles arriving in $B$ under $R$. Notice the explicit mention of $R$ in the notations: in the following, we will allow ourselves to replace $R$ by any other measure $\bar R \in \mathcal M_+(\DD \times \FF)$.

The main result of this section is the following. 
\begin{theo}
	\label{thm:CNS}
	Let $R \in \mathcal M_+(\DD \times \FF)$, $\mu \in \mathcal M_+(\DD)$ and $\nu \in \mathcal M_+(\FF)$. The three following assertions are equivalent:
	\begin{enumerate}[label={(\alph*)}, leftmargin=.1\textwidth]
	\item \label{item:cond_A} $\M(\mu) = \M(\nu)$ and for all $A \subset \DD$, $\mu(A) \leq \nu (F_R(A))$.
	\item \label{item:cond_B} $\M(\mu) = \M(\nu)$ and for all $B \subset \FF$, $\nu(B) \leq \mu(D_R(B))$.
	\item \label{item:existence_solution} $\Sch(R; \mu, \nu)$ is scalable or approximately scalable.
\end{enumerate}
\end{theo}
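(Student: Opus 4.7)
My plan is to establish the cyclic chain $(a) \Leftrightarrow (b)$, $(c) \Rightarrow (a)$, and $(a) \Rightarrow (c)$. The mass equality $\M(\mu) = \M(\nu)$ is automatic in $(c)$ by Remark~\ref{rem:coupling_total_mass} and hypothesized in $(a)$ and $(b)$, so the real content lies in the set-theoretic inequalities and in the existence of a competitor.

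For $(c) \Rightarrow (a)$, if $R^*$ solves $\Sch(R;\mu,\nu)$, then $H(R^*|R) < +\infty$ forces $R^*_{ij} = 0$ whenever $R_{ij} = 0$, so only indices $j$ with $y_j \in F_R(A)$ can contribute to the row-sums indexed by $A$:
\[\mu(A) = \sum_{x_i \in A}\sum_j R^*_{ij} = \sum_{x_i \in A} \sum_{y_j \in F_R(A)} R^*_{ij} \le \sum_i \sum_{y_j \in F_R(A)} R^*_{ij} = \nu(F_R(A)).\]

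For $(a) \Leftrightarrow (b)$, the argument is purely combinatorial via complements. Given $B \subset \FF$, I would take $A := \DD \setminus D_R(B)$ and observe that by definition no element of $A$ is $R$-connected to any element of $B$, hence $F_R(A) \subset \FF \setminus B$. Condition $(a)$ applied to this $A$, combined with $\M(\mu) = \M(\nu)$, then gives
\[\M(\mu) - \mu(D_R(B)) = \mu(A) \le \nu(F_R(A)) \le \M(\nu) - \nu(B),\]
which rearranges to $(b)$; the converse is symmetric.

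The crux is $(a) \Rightarrow (c)$. As already noted in Subsection~\ref{subsec:ass}, it suffices to produce a single competitor $\bar R \in \Pi(\mu,\nu)$ with $H(\bar R | R) < +\infty$, after which existence of a unique minimizer follows from strict convexity and lower semicontinuity of $H(\,\cdot\,|R)$ on the compact set $\Pi(\mu,\nu)$. I would construct such a $\bar R$ via the max-flow/min-cut theorem on the network with source $s$, sink $t$, nodes $\DD \cup \FF$, edges $(s,x_i)$ of capacity $\mu_i$, edges $(y_j,t)$ of capacity $\nu_j$, and edges $(x_i,y_j)$ of infinite capacity whenever $R_{ij}>0$. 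A finite $s$--$t$ cut is parametrized by a subset $A \subset \DD$ (the $x_i$'s on the source side) together with $B \subset \FF$ on the source side satisfying $F_R(A) \subset B$, with capacity $(\M(\mu) - \mu(A)) + \nu(B)$. Minimizing over admissible $B$ at fixed $A$ yields $\M(\mu) - \mu(A) + \nu(F_R(A))$, which is $\ge \M(\mu)$ precisely by $(a)$; the trivial cut $A = \emptyset$ saturates this bound, so the minimum cut equals $\M(\mu)$. By max-flow/min-cut there is a flow of value $\M(\mu) = \M(\nu)$, and its values on the middle edges define the desired $\bar R$ with support in that of $R$ and marginals $\mu$ and $\nu$. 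The main obstacle is precisely this Hall-type step; the network flow argument is the cleanest tool, though an induction on the support of $R$ (removing minimal saturated sub-bipartite-components) would be the main alternative.
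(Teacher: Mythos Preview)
Your proof is correct, but it takes a genuinely different route from the paper's own argument for the hard implication $(a)\Rightarrow(c)$.

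The paper does not invoke max-flow/min-cut. Instead, it first proves Lemma~\ref{lem:non_degenerate_case} (the characterization of the \emph{scalable} case via the strict inequalities~\ref{item:cond_A'},~\ref{item:cond_B'}) by a topological argument: fixing $\mu$, it defines $\mathfrak A \subset \P(\FF)$ as the set of $\nu$'s satisfying the strict Hall-type condition and $\mathfrak B$ as the set of $\nu$'s arising as second marginals of some $\bar R \sim R$ with $\mu^{\bar R}=\mu$, and then shows $\mathfrak B$ is both open and closed in the convex (hence connected) set $\mathfrak A$. With Lemma~\ref{lem:non_degenerate_case} in hand, Theorem~\ref{thm:CNS} is obtained by perturbation: setting $\mu^\eps := (1-\eps)\mu + \eps\mu^R$, $\nu^\eps := (1-\eps)\nu + \eps\nu^R$, condition~(a) forces~(a') for $(\mu^\eps,\nu^\eps)$, so $\Sch(R;\mu^\eps,\nu^\eps)$ is scalable; any limit point of the solutions as $\eps\to 0$ then provides a competitor for $\Sch(R;\mu,\nu)$.

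Your network-flow argument is shorter and more self-contained for Theorem~\ref{thm:CNS} alone, and it sidesteps the reduction to Assumption~\ref{ass:full_support} that the paper has to handle separately. The paper's longer detour, however, is not wasted: Lemma~\ref{lem:non_degenerate_case} is used again later (in the proof that smallest maximal $\theta$-sets are SISP sets), so proving it first and deducing Theorem~\ref{thm:CNS} as a corollary fits the paper's overall architecture. Your direct $(a)\Leftrightarrow(b)$ via complements is also cleaner than the paper's treatment, which simply notes the symmetry and proves only one side.
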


Note that the implications~\ref{item:existence_solution} $\Rightarrow$~\ref{item:cond_A} and~\ref{item:existence_solution} $\Rightarrow$~\ref{item:cond_B} are straightforward, and that only the reverse implications are challenging. Also, we already noticed in Subsection~\ref{subsec:ass} that~\ref{item:existence_solution} implies Assumption~\ref{ass:condition_sinkhorn_well_defined}. Hence, it is also the case for~\ref{item:cond_A} and~\ref{item:cond_B}. 

The proof relies on the following Lemma~\ref{lem:non_degenerate_case}, which gives a necessary and sufficient condition on $R$, $\mu$ and $\nu$ ensuring $R^*$ to have the same support as $R$, that is, to be in the scalable case. In this statement, we use the notations $\mu^R$ and $\nu^R$ as defined in~\eqref{eq:notation_marginals}, and we work under Assumption~\ref{ass:full_support}, which is always possible under Assumption~\ref{ass:condition_sinkhorn_well_defined} up to considering subspaces of $\DD$ and $\FF$, see Subsection~\ref{subsec:ass}. 
\begin{lem}
	\label{lem:non_degenerate_case}
		Let $R \in \mathcal M_+(\DD \times \FF)$, $\mu \in \mathcal M_+(\DD)$ and $\nu \in \mathcal M_+(\FF)$, satisfying Assumption~\ref{ass:full_support}. The three following assertions are equivalent:
	\begin{enumerate}[label={(\alph*')}, leftmargin=.1\textwidth]
		\item \label{item:cond_A'} $\M(\mu) = \M(\nu)$ and for all $A \subset \DD$, $\mu(A) \leq \nu (F_R(A))$, with a strict inequality whenever $\mu^R (A) < \nu^R(F_R(A))$.
		\item \label{item:cond_B'} $\M(\mu) = \M(\nu)$ and for all $B \subset \FF$, $\nu(B) \leq \mu(D_R(B))$, with a strict inequality whenever $\nu^R (B) < \mu^R(D_R(B))$.
		\item \label{item:existence_solution'} $\Sch(R; \mu, \nu)$ is scalable. 
	\end{enumerate}
\end{lem}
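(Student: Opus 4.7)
The plan is to establish (c') $\Rightarrow$ (a') directly, (a') $\Leftrightarrow$ (b') by a complementation duality in the bipartite graph, and to treat (a') $\Rightarrow$ (c') as the main difficulty through a structural analysis of the support of the optimizer.

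The equivalence (a') $\Leftrightarrow$ (b') will follow from the elementary adjoint relation $F_R(A) \cap B = \emptyset$ if and only if $A \cap D_R(B) = \emptyset$: applying (a') to the set $A = \DD \setminus D_R(B)$ yields (b'), and the strict-inequality clause transfers because $\mu^R(D_R(B)) - \nu^R(B)$ counts exactly the $R$-edges going from $D_R(B)$ to $\FF \setminus B$. For (c') $\Rightarrow$ (a'), scalability gives $R^* \sim R$, so $\mu(A) = \sum_{i \in A, j \in F_R(A)} R^*_{ij} \leq \nu(F_R(A))$; if moreover $\mu^R(A) < \nu^R(F_R(A))$, then some $(i', j') \in (\DD \setminus A) \times F_R(A)$ satisfies $R_{i'j'} > 0$, hence $R^*_{i'j'} > 0$, contributing to $\nu(F_R(A))$ but not to $\mu(A)$, and the inequality becomes strict.

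For the hard direction (a') $\Rightarrow$ (c'), Theorem~\ref{thm:CNS} will first provide the existence of a solution $R^*$. Assume for contradiction that its support is strictly contained in the one of $R$, so at least one \emph{missing} edge $(i_0, j_0)$ (with $R_{i_0 j_0} > 0$ and $R^*_{i_0 j_0} = 0$) exists. Introduce the directed meta-graph $\mathcal M$ whose vertices are the connected components $\{(A_l, B_l)\}_l$ of the bipartite graph of $R^*$ and whose arcs are $C_k \to C_l$, one for each missing edge $(i,j)$ with $i \in A_k$ and $j \in B_l$. The pivotal step will be to show that $\mathcal M$ is a directed acyclic graph: any directed cycle of $\mathcal M$ lifts, by inserting odd-length $R^*$-paths inside each visited component, to a bipartite cycle of the $R$-graph along which every missing edge sits at a position of the same parity; the standard alternating-sign $\eps$-perturbation along this cycle preserves both marginals, remains nonnegative, and picks up a $\eps \log \eps$ contribution at each missing edge, so that the one-sided derivative of $H(\cdot \, | \, R)$ at $\eps = 0^+$ equals $-\infty$, contradicting the optimality of $R^*$.

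Since $\mathcal M$ is a nonempty DAG, following outgoing arcs from any starting vertex must terminate at a sink component $C_l$ possessing at least one incoming arc, i.e.\ at a missing edge $(i_*, j_*)$ with $i_* \notin A_l$ and $j_* \in B_l$. Choosing $A := A_l$, the sink property forbids missing edges leaving $A_l$, so $F_R(A_l) = F_{R^*}(A_l) = B_l$, and mass conservation within the component $C_l$ forces $\mu(A_l) = \nu(B_l) = \nu(F_R(A_l))$, giving equality in the (a')-inequality. On the other hand the incoming missing edge contributes $R_{i_* j_*} > 0$ to $\nu^R(B_l)$ but not to $\mu^R(A_l)$, so $\mu^R(A) < \nu^R(F_R(A))$ strictly, which contradicts the strict-inequality clause of (a'). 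The main obstacle is the DAG step: one has to verify carefully that any directed cycle of $\mathcal M$ induces a bipartite cycle of the $R$-graph whose missing edges all share the same parity, so that the $\eps \log \eps$ singularity can be exploited simultaneously at every missing edge traversed.
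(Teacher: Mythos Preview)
Your plan contains a circularity: for (a') $\Rightarrow$ (c') you invoke Theorem~\ref{thm:CNS} to obtain the existence of a minimizer $R^*$, but in this paper Theorem~\ref{thm:CNS} is proved \emph{from} Lemma~\ref{lem:non_degenerate_case} (see the paragraph preceding the lemma, and the proof in Appendix~\ref{proof of theorem CNS}, which perturbs $(\mu,\nu)$ towards $(\mu^R,\nu^R)$ and then applies the lemma to the perturbed data). So within the paper's logical order you cannot call Theorem~\ref{thm:CNS} here. The paper sidesteps this by a topological argument: fixing $\mu$, it shows that the set $\mathfrak B = \{\nu^{\bar R} : \bar R \sim R,\ \mu^{\bar R}=\mu\}$ is both open and closed in the convex (hence connected) set $\mathfrak A$ of $\nu$'s satisfying~(b''), whence $\mathfrak A = \mathfrak B$. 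In the ``closed'' step the paper works with $\nu$ already lying in $\mathfrak A \cap \mathfrak C$ where $\mathfrak C = \{\nu^{\bar R}:\bar R\ll R,\ \mu^{\bar R}=\mu\}$, so existence of $R^*$ comes for free from membership in $\mathfrak C$, not from Theorem~\ref{thm:CNS}.

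Apart from this gap, your combinatorial argument is closely related to the paper's Steps~4--5, though organised differently. Your no-cycle (DAG) claim is exactly the paper's key perturbation: a directed cycle in the component meta-graph lifts to an alternating bipartite cycle along which one augments $R^*$ by $\pm\eps$, creating new positive entries while keeping both marginals fixed, contradicting optimality of $R^*$ (the paper phrases this via Proposition~\ref{prop:R*_has_biggest_support}; your $\eps\log\eps$ derivative is the same mechanism unpacked). Where you then locate a \emph{sink} with an incoming arc and contradict the strict clause of~(a'), the paper instead uses the strict clause of~(b'') to show that every vertex of the meta-graph has an \emph{outgoing} arc, so that several components would force a cycle. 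Both endings are valid once $R^*$ is available; repairing your proof amounts to supplying a non-circular source of existence for~$R^*$.
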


In plain words, it highlights the difference between the approximately scalable and scalable cases, by showing that the scalable case consists in assuming as much strict inequalities in~\ref{item:cond_A} or in~\ref{item:cond_B} as possible.
Although both Theorem~\ref{thm:CNS} and Lemma~\ref{lem:non_degenerate_case} can be directly deduced from the work of Brualdi~\cite{Brualdi1968}, we provide in Appendix~\ref{proof of theorem CNS} a short and independent proof based on topological arguments.

\subsection{Theoretical construction of the support}
\label{subsec:procedure}

In the scalable case, the Sinkhorn algorithm is known to have a linear rate of convergence. On the other hand, in the approximately scalable case, the algorithm still converges, but the (unknown) convergence rate cannot be linear~\cite{Achilles1993}.

In this subsection, we study the support of the solution of the Schr\"odinger problem in the approximately scalable and non-scalable cases for the following reason. Take $R$, $\mu$ and $\nu$ such that $\Sch(R; \mu, \nu)$ is approximately scalable, $R^*$ the solution of this problem, and $\mathcal S$ the support of $R^*$. Then $\Sch(\1_{\mathcal S} R; \mu,\nu)$ is scalable and its solution is $R^*$. In particular, the Sinkhorn algorithm applied to this problem has a linear rate of convergence. Interestingly, a similar reasoning is valid in the non-scalable case, as we show in Proposition~\ref{prop:recover_linear_rate} below. 
 
 \bigskip

Without loss of generality, and for the sake of simplicity, in the whole subsection, we work under Assumption~\ref{ass:full_support}. By Remark~\ref{rem:swap_mu_mu*}, if $\mu^*$ and $\nu^*$ are defined by~\eqref{eq:def_mu*_nu*}, we have $\nu \sim \nu^*$ and $\mu \sim \mu^*$. So under Assumption~\ref{ass:full_support}, they have a full support as well.
 
 \begin{prop}
\label{prop:recover_linear_rate}
Let $R \in \mathcal M_+(\DD \times \FF)$, $\mu \in \mathcal M_+(\DD)$ and $\nu\in \mathcal M_+(\FF)$ satisfying Assumption~\ref{ass:full_support}. Let us call $\mathcal S$ the common support of $P^*$ and $Q^*$ from Theorem~\ref{thm:convergence_sinkhorn}, and $R^*$ from Theorem~\ref{thm:link_P*_Q*_R*}. 
Let $(P^n)_{n \in \N^*}$ and $(Q^n)_{n \in \N^*}$ be given by~\eqref{eq:computable Sinkhorn} applied to $\Sch(\1_{\mathcal S}R; \mu,\nu)$. They converge respectively towards $P^*$ and $Q^*$, both of them at a linear rate.
\end{prop}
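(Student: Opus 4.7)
The plan has three steps, with the third being the main obstacle.

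\textbf{Step 1 (well-definedness and convergence).} Under Assumption~\ref{ass:full_support}, $\mu$ and $\nu$ have full support, and since $P^*$ (resp.\ $Q^*$) is supported on $\mathcal S$ with first marginal $\mu$ (resp.\ second marginal $\nu$), the projection of $\mathcal S$ onto each factor covers the whole factor. Consequently $\mu^{\1_{\mathcal S}R}$ and $\nu^{\1_{\mathcal S}R}$ have full support, so Assumption~\ref{ass:condition_sinkhorn_well_defined} holds for the triple $(\1_{\mathcal S}R;\mu,\nu)$. Applying Theorem~\ref{thm:convergence_sinkhorn} to this triple gives convergence of the iterates to some limits $\tilde P^*,\tilde Q^*$, characterised through auxiliary marginals $\tilde\mu^*,\tilde\nu^*$ defined in analogy with~\eqref{eq:def_mu*_nu*}.

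\textbf{Step 2 (identification of the limits).} The admissible set in the variational problem defining $\tilde\nu^*$ is included in the one defining $\nu^*$, since $H(P|\1_{\mathcal S}R)<+\infty$ forces $P\ll\1_{\mathcal S}R$ and a fortiori $P\ll R$. However $P^*$ is itself supported on $\mathcal S$, so $H(P^*|\1_{\mathcal S}R)=H(P^*|R)<+\infty$, meaning $\nu^*=\nu^{P^*}$ still lies in the smaller admissible set. Strict convexity of $\bar\nu\mapsto H(\bar\nu|\nu)$ then forces $\tilde\nu^*=\nu^*$, and symmetrically $\tilde\mu^*=\mu^*$. The same support argument shows that $P^*$ solves $\Sch(\1_{\mathcal S}R;\mu,\nu^*)$ — any competitor with finite entropy w.r.t.\ $\1_{\mathcal S}R$ is supported on $\mathcal S$, where $\1_{\mathcal S}R$ and $R$ coincide — hence $\tilde P^*=P^*$; likewise $\tilde Q^*=Q^*$.

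\textbf{Step 3 (linear rate).} This is the main obstacle. The crucial observation is that the two auxiliary problems identified in Step 2, namely $\Sch(\1_{\mathcal S}R;\mu,\nu^*)$ and $\Sch(\1_{\mathcal S}R;\mu^*,\nu)$, are both \emph{scalable}: their solutions $P^*$ and $Q^*$ have support exactly equal to $\mathcal S$, which coincides with the support of the reference $\1_{\mathcal S}R$. I would then study the two-step Sinkhorn map $\Phi:Q\mapsto Q'$ obtained by first row-scaling $Q$ to marginal $\mu$ and then column-scaling the result to marginal $\nu$, acting on positive measures supported on $\mathcal S$ with second marginal $\nu$; by~\eqref{eq:optimal_P*_Q*}, $Q^*$ is its unique fixed point on this set. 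Linearising $\Phi$ at $Q^*$ and exploiting Proposition~\ref{prop:optimality_conditions_mu*_nu*} (in particular $\varphi_i+\psi_j=0$ on $\mathcal S$) to simplify the prefactors, the derivative factors through two row-stochastic averaging operators associated with the bipartite graph $\mathcal S$ weighted by $P^*$. A classical Birkhoff/Perron--Frobenius argument — applied per connected component if $\mathcal S$ happens to be disconnected — yields spectral radius strictly below one on the quotient modulo constants, giving geometric decay of $\|Q^n-Q^*\|$. One additional Sinkhorn step then transfers the rate to $\|P^n-P^*\|$.
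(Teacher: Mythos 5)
Your Steps 1 and 2 are sound: Assumption~\ref{ass:condition_sinkhorn_well_defined} does pass to the triple $(\1_{\mathcal S}R;\mu,\nu)$, and the inclusion of admissible sets, together with the fact that $P^*$ and $Q^*$ are supported on $\mathcal S$ (where $H(\cdot\,|\,\1_{\mathcal S}R)$ and $H(\cdot\,|\,R)$ differ only by the constant $\M(\1_{\mathcal S}R)-\M(R)$), correctly identifies the limits of the new iterates as $P^*$ and $Q^*$. The problem is Step 3, which you yourself flag as the main obstacle: as written it is a programme, not a proof. The assertions that the derivative of the two-step map at $Q^*$ ``factors through two row-stochastic averaging operators'' and that a Perron--Frobenius argument gives spectral radius strictly below one on the appropriate subspace are plausible but are nowhere established; carrying them out amounts to re-deriving from scratch the linear convergence of Sinkhorn in the scalable case, for a non-standard map whose fixed point $Q^*$ has marginals $(\mu^*,\nu)$ rather than $(\mu,\nu)$. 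Even once completed, the linearisation only yields a local rate, so you would still need to combine it with the global convergence of Step 1 to get the asymptotic statement.

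The identity you want to exploit in the linearisation, namely $\varphi_i+\psi_j=0$ on $\mathcal S$, i.e.\ $\nu_j=\mu^*_i\nu^*_j/\mu_i$ for $(x_i,y_j)\in\mathcal S$ (first line of~\eqref{eq:optimality_conditions_mu*_nu*}), can be used much more directly, at the level of the iterates themselves, and this is what the paper does. A short induction shows that the sequence $(P^n)$ produced by~\eqref{eq:computable Sinkhorn} for $\Sch(\1_{\mathcal S}R;\mu,\nu)$ coincides \emph{term by term} with the sequence $(\tilde P^n)$ produced for the scalable problem $\Sch(\1_{\mathcal S}R;\mu,\nu^*)$: replacing $\nu_j$ by $(\mu^*_i/\mu_i)\nu^*_j$ in the column-scaling step produces a factor depending only on $i$, which is absorbed by the subsequent row-normalisation. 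One then simply quotes the known linear rate for scalable problems (whose solution $P^*$ has full support in $\1_{\mathcal S}R$) to get $P^n=\tilde P^n\to P^*$ linearly, and $Q^n_{ij}=(\nu_j/\nu^{P^n}_j)P^n_{ij}$ is handled symmetrically. I would advise you either to adopt this iterate-identification shortcut, or, if you insist on the spectral route, to actually compute the derivative of $\Phi$ on the subspace of matrices supported on $\mathcal S$ with vanishing column sums and prove the contraction estimate; in its current form Step 3 leaves the heart of the proposition unproved.
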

\begin{proof}
Let $(P^n)_{n \in \N^*}$ and $(Q^n)_{n \in \N^*}$ be given by the equivalent formulations \eqref{eq:sinkhorn} and \eqref{eq:computable Sinkhorn} applied to $\Sch(\1_{\mathcal S}R; \mu,\nu)$. Let us show that $(P^n)$ converges towards $P^*$ at a linear rate. The case of $(Q^n)$ follows the same arguments. The idea is that if $(\tilde P^n)_{n \in \N^*}$ and $(\tilde Q^n)_{n \in \N^*}$ are given by~\eqref{eq:sinkhorn} and~\eqref{eq:computable Sinkhorn} applied to $\Sch(\1_{\mathcal S}R; \mu,\nu^*)$, then for all $n \in \N^*$, $P^n = \tilde P^n$. As the problem $\Sch(\1_{\mathcal S}R; \mu,\nu^*)$ is scalable (its solution, $P^*$, has the same support as $\1_{\mathcal S}R$), the rate of convergence of $(\tilde P^n)$ towards $P^*$ is linear, and the result follows.

So let us prove by induction that for all $n \in \N^*$, $P_n = \tilde P_n$. According to~\eqref{eq:sinkhorn}, $P^1$ and $\tilde P^1$ are solutions to the same problem, and therefore coincide. Let us now consider $n \in \mathbb{N}^*$ such that $P^n = \tilde P^n$ and show that $P^{n+1} = \tilde P^{n+1}$. By construction, the support of $P^{n+1}$ and $\tilde P^{n+1}$ is $\mathcal S$, so we just need to check that for all $(x_j,y_j)  \in \mathcal S$, $P^{n+1}_{ij} = \tilde P^{n+1}_{ij}$. By the first line of~\eqref{eq:optimality_conditions_mu*_nu*}, for all $(x_i,y_j) \in \mathcal S$, we have:
 \begin{equation*}
     \nu_j = \frac{\mu^*_i \nu^*_j}{\mu_i}.
 \end{equation*}
Hence, for all $(x_i,y_j) \in \mathcal S$:
\begin{align*}
P^{n+1}_{ij} = \frac{\mu_i}{\mu^{Q^n}_i}Q^n_{ij}
= \frac{\mu_i}{ \displaystyle \sum_{j'} \frac{\nu_{j'}}{\nu^{P^n}_{j'}} P^n_{ij'}} \times \frac{\nu_j}{\nu^{P^n}_{j}}P^n_{ij}
= \frac{\mu_i}{\displaystyle \frac{\mu_i^*}{\mu_i} \sum_{j'} \frac{\nu^*_{j'}}{\nu^{\tilde P^n}_{j'}} \tilde P^n_{ij'}} \times \frac{\mu_i^*}{\mu_i}\frac{\nu^*_j}{\nu^{\tilde P^n}_j}\tilde P^n_{ij}
= \frac{\mu_i}{\mu^{\tilde Q^n}_i}\tilde Q^n_{ij}
= \tilde P^{n+1}_{ij},
\end{align*}
where the change from $P^n$ to $\tilde P^n$ in the middle coming from the induction assumption $P^n=\tilde P^n$. The result follows.
\end{proof}
 
 Therefore, even in the non-scalable case, a way to improve the Sinkhorn algorithm consists in first finding $\mathcal S$, and then computing the solution of a scalable problem. We propose in this subsection a theoretical procedure allowing to get this support without using the Sinkhorn algorithm in both the approximately and non-scalable cases, and we will propose an approximate method for achieving this task numerically at Section~\ref{sec_numerical}. 
 
 \bigskip

To detail our procedure, we introduce a class of subsets of $\DD$ associated with a triple $(R;\mu,\nu)$.

\begin{defn}
Let $R \in \mathcal M_+(\DD \times \FF)$, $\mu \in \mathcal M_+(\DD)$ and $\nu\in \mathcal M_+(\FF)$ satisfying Assumption~\ref{ass:full_support}. Let us consider $R^*$ from Theorem~\ref{thm:link_P*_Q*_R*}. We say that a subset $A \subset \DD$ is the \emph{source of an isolated scalable problem} (or for short that $A$ is a SISP set) for $(R;\mu,\nu)$ if $A \neq \emptyset$ and:
\begin{itemize}
    \item The set $(\DD \backslash A) \times F_R(A)$ is $R^*$-negligible, \emph{i.e.}
    \begin{equation}
    \label{eq:SISP_isolated}
        R^*\Big((\DD \backslash A) \times F_R(A)\Big) = 0.
    \end{equation}
    \item For all $x_i \in A$ and $y_j \in \FF$,
    \begin{equation}
    \label{eq:support_R_R*}
        R^*_{ij}>0 \quad \Leftrightarrow \quad R_{ij}>0.
    \end{equation}
\end{itemize}
\end{defn}
We show at~\cref{fig:SISP_set} an illustration of what a SISP is.

Of course, as $P^*$ and $Q^*$ from Theorem~\ref{thm:convergence_sinkhorn} are equivalent to $R^*$ in the sense of measures, we could have replaced $R^*$ in the previous definition by one of them.

\begin{figure}
	\centering
	\includegraphics[scale=.15]{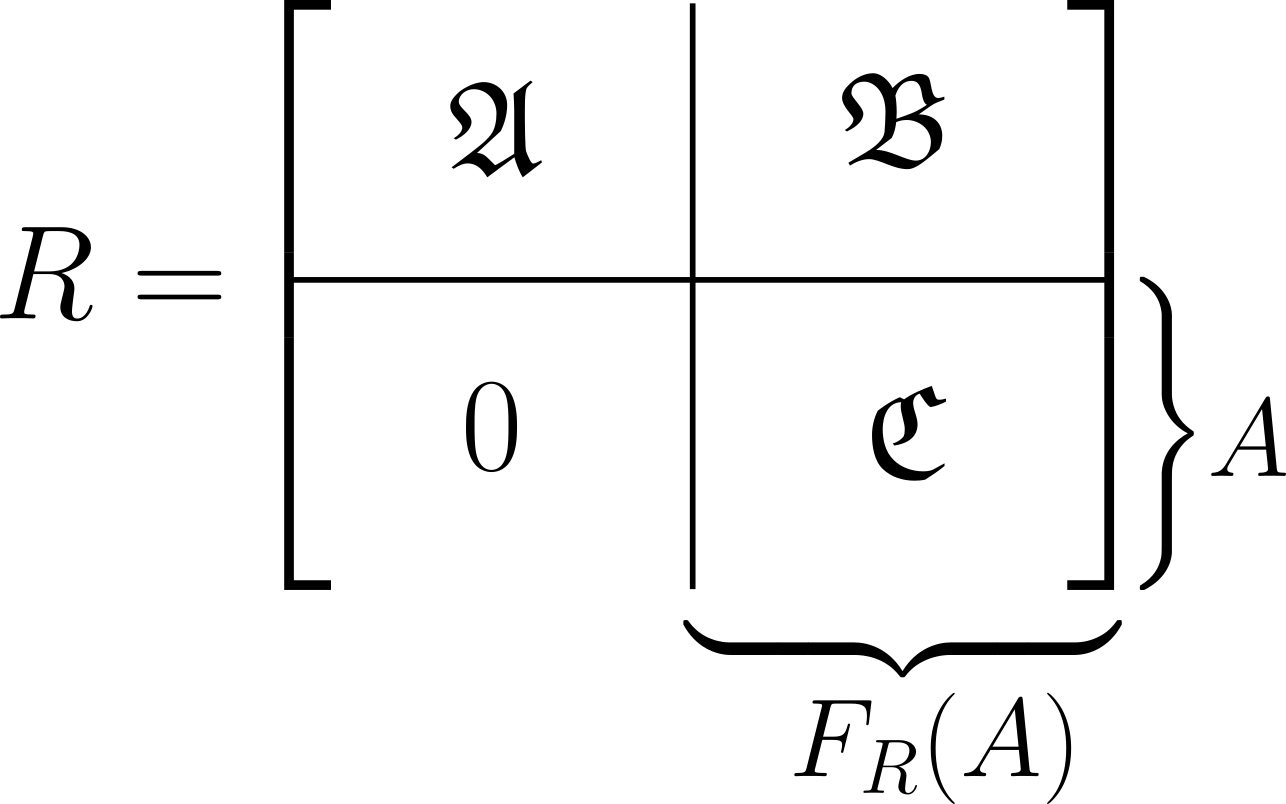}
	\caption{\label{fig:SISP_set} If $A\subset \DD$, up to reordering the lines, we can assume that it corresponds to the last lines. Up to reordering the columns, we can assume that $F_R(A)$ corresponds to the last columns. Then, $R$ has the form given in the picture. In this situation, $A$ is a SISP set for $(R; \mu,\nu)$ if $R^*$ cancels on the block $\mathfrak B$ and if the supports of $R$ and $R^*$ coincide on the block $\mathfrak C$.}
\end{figure}
 On the one hand, SISP sets always exist, at least under Assumption~\ref{ass:full_support}, as announced in the following lemma. Its proof is our main task in this part of our work, and is given at the end of the subsection.
\begin{lem}
	\label{lem:SISP_sets_exist}
	Let $R \in \mathcal M_+(\DD \times \FF)$, $\mu \in \mathcal M_+(\DD)$ and $\nu\in \mathcal M_+(\FF)$ satisfying Assumption~\ref{ass:full_support}. Then there exists a SISP set for $(R; \mu,\nu)$.
\end{lem}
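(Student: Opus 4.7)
Since $Q^*$ is the unique solution of $\Sch(R;\mu^*,\nu)$, Theorem~\ref{thm:CNS} applied to this problem ensures $\mu^*(A')\leq\nu(F_R(A'))$ for every $A'\subset\DD$; call $A'$ \emph{tight} when equality holds. Under Assumption~\ref{ass:full_support} combined with Remark~\ref{rem:mass_identities} one has $\M(\mu^*)=\M(\nu)$ and $F_R(\DD)=\FF$, so $A'=\DD$ is tight and nonempty tight subsets exist. The plan is to pick $A$ to be inclusion-minimal among such, and to verify that it is a SISP set. By Proposition~\ref{prop:optimality_conditions_mu*_nu*}, $R^*$, $P^*$ and $Q^*$ share their support $\mathcal S$, so the two SISP conditions~\eqref{eq:SISP_isolated}--\eqref{eq:support_R_R*} may equivalently be checked for $Q^*$.

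To obtain condition~\eqref{eq:SISP_isolated}, I would combine the balance $\mu^{Q^*}(A)=\mu^*(A)=\nu(F_R(A))=\nu^{Q^*}(F_R(A))$ (the middle equality being the tightness of $A$) with the identity $\mu^{Q^*}(A)=\sum_{i\in A,\,j\in F_R(A)}Q^*_{ij}$, which holds because $Q^*\ll R$ forces each row $Q^*_{i\,\cdot}$ with $x_i\in A$ to be supported in $F_R(\{x_i\})\subset F_R(A)$. This yields
\[
\sum_{i\notin A,\,j\in F_R(A)}Q^*_{ij}\;=\;\nu^{Q^*}(F_R(A))-\sum_{i\in A,\,j\in F_R(A)}Q^*_{ij}\;=\;0,
\]
so that $Q^*\bigl((\DD\setminus A)\times F_R(A)\bigr)=0$.

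For condition~\eqref{eq:support_R_R*}, the vanishing just obtained lets me block-decompose $Q^*$ and identify $Q^*|_{A\times F_R(A)}$ as the unique solution of the sub-Schr\"odinger problem $\Sch\bigl(R|_{A\times F_R(A)};\mu^*|_A,\nu|_{F_R(A)}\bigr)$. The key step is then to show that this sub-problem is \emph{scalable} via Lemma~\ref{lem:non_degenerate_case}\,\ref{item:cond_A'}: scalability forces the supports of $Q^*|_{A\times F_R(A)}$ and $R|_{A\times F_R(A)}$ to coincide, and since every $R$-neighbor of an $x_i\in A$ automatically lies in $F_R(A)$, this is precisely~\eqref{eq:support_R_R*}.

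The main obstacle is verifying the scalability criterion for the sub-problem. For every nonempty $A'\subsetneq A$, inclusion-minimality of $A$ forbids $A'$ from being tight, so $\mu^*(A')<\nu(F_R(A'))$ strictly, which handles the strict-inequality clause of~\ref{item:cond_A'}. For $A'=A$, a direct computation gives $\mu^{R|_{A\times F_R(A)}}(A)=\nu^{R|_{A\times F_R(A)}}(F_R(A))=\sum_{i\in A,\,j\in F_R(A)}R_{ij}$, so no strict inequality is required and the tightness identity $\mu^*(A)=\nu(F_R(A))$ suffices. Carrying out this bookkeeping yields sub-scalability and completes the argument.
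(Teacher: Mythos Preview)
Your argument is correct. The route differs from the paper's: you select $A$ as an inclusion-minimal nonempty set satisfying $\mu^*(A)=\nu(F_R(A))$, and then verify both SISP conditions by a direct mass-balance computation together with Lemma~\ref{lem:non_degenerate_case}. The paper instead introduces the \emph{maximal $\theta$-sets}, i.e.\ maximizers of $\mu(A)/\nu(F_R(A))$, and shows via the optimality conditions of Proposition~\ref{prop:optimality_conditions_mu*_nu*} (the potentials $\varphi_i=\log(\mu^*_i/\mu_i)$, $\psi_j=\log(\nu^*_j/\nu_j)$ and the sign condition~\eqref{eq:optimality_conditions_mu*_nu*}) that a smallest maximal $\theta$-set is a SISP set.

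Your approach is more elementary: it bypasses the potential machinery entirely and needs only Theorem~\ref{thm:CNS}, Lemma~\ref{lem:non_degenerate_case}, and the block-restriction argument. The paper's approach, however, buys something important for the rest of the section: the maximal $\theta$-sets are defined purely in terms of the \emph{data} $(R;\mu,\nu)$, whereas your tight sets require knowing $\mu^*$, which is itself part of the unknown. This distinction matters for Proposition~\ref{prop:connected_components} and Algorithm~\ref{algo:preprocessing}, where one wants to locate SISP sets numerically before running Sinkhorn; the $\theta$-ratio criterion~\eqref{rel:max_cc} used there is exactly the paper's characterization. The paper's proof also yields the structural fact that $\nu^*/\nu$ is constant (equal to $\theta_m$) on $F_R(A)$ for such $A$, which is exploited in Figure~\ref{fig:procedure}.
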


On the other hand, once we know how to find SISP sets, an iterative procedure consisting in finding SISP sets for a sequence of more and more restricted problems makes is possible to reconstruct the whole subset $\mathcal S$.

\begin{prop}
	\label{prop:find_S}
	Let $R \in \mathcal M_+(\DD \times \FF)$, $\mu \in \mathcal M_+(\DD)$ and $\nu\in \mathcal M_+(\FF)$ satisfying Assumption~\ref{ass:full_support}. Let us call $\mathcal S$ the common support of $P^*$ and $Q^*$ from Theorem~\ref{thm:convergence_sinkhorn}, and $R^*$ from Theorem~\ref{thm:link_P*_Q*_R*}. 
	
	We define by inference $(R^n)_{n \in \N}$ a sequence in $\mathcal M_+(\DD \times \FF)$, $(\DD^n)_{n \in \N}$ a nonincreasing sequence of subsets of $\DD$ and $(\FF^n)_{n \in \N}$ a nonincreasing sequence of subsets of $\FF$ in the following way:
	\begin{itemize}
		\item For $n=0$, we set $R^0 := R$, $\DD^0 := \DD$ and $\FF^0 := \FF$;
		\item For all $n \in \N$, if $\DD^n$ and $\FF^n$ are nonempty and $(R^n\llcorner_{\DD^n \times \FF^n}; \mu\llcorner_{\DD^n}, \nu\llcorner_{\FF^n})$ satisfies Assumption~\ref{ass:full_support}, we pick $M_n$ a SISP set as given by Lemma~\ref{lem:SISP_sets_exist}, and we set:
		\begin{gather*}
		\DD^{n+1} := \DD^n \backslash M_n, \qquad \FF^{n + 1} := \FF^n \backslash F_{R^n\llcorner_{\DD^n \times \FF^n}}(M_n), \\[10pt] 
		\forall i,j, \quad R^{n+1}_{ij} := \left\{ 
		\begin{aligned}
		&0, &&\mbox{if } y_j \in F_{R^n\llcorner_{\DD^n \times \FF^n}}(M_n) \mbox{ and } x_i \in \DD^{n+1},\\[5pt]
		& R^n_{ij}, &&\mbox{otherwise}.
		\end{aligned} \right.
		\end{gather*}
		Otherwise, we set $R^{n+1} := R^n$, $\DD^{n+1} := \DD^n$ and $\FF^{n+1} := \FF^n$.
	\end{itemize}
	
	With this construction, the sequence $(R^n, \DD^n, \FF^n)_{n \in \N}$ is stationary. More precisely, there exists $N \in \N^*$ such that for all $n \geq N$,
	\begin{equation*}
	\DD^n = \emptyset, \qquad \FF^n = \emptyset, \qquad R^n = \1_{\mathcal S} R.
	\end{equation*}
\end{prop}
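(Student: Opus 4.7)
The plan rests on two pillars: a termination argument based on finite cardinality, and a carefully maintained inductive invariant that ties the procedure's current state to the support $\mathcal{S}$ of the original $R^*$.

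\textbf{Termination.} Since $\DD$ is finite of cardinality $N$, the nonincreasing sequence $(\DD^n)_{n \in \N}$ must stabilize. Moreover, at any step where we actually pick a SISP set $M_n$, the cardinality $|\DD^n|$ strictly decreases, as $M_n \neq \emptyset$ by definition of a SISP. Hence after at most $N$ nontrivial steps the sequence is stationary. What needs work is showing that (i) the procedure never terminates for the ``wrong'' reason (namely, failure of Assumption~\ref{ass:full_support} on the current reduced triple), and (ii) at the actual termination point, the output is $\1_{\mathcal{S}} R$.

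\textbf{Inductive invariant.} I maintain, for each $n$ while the procedure is nontrivial: (a) $R^n\llcorner_{\DD^n \times \FF^n} = R\llcorner_{\DD^n \times \FF^n}$; (b) $R^n\llcorner_{(\DD \setminus \DD^n) \times \FF} = \1_{\mathcal{S}} R \llcorner_{(\DD \setminus \DD^n) \times \FF}$; (c) the reduced triple $(R^n\llcorner_{\DD^n \times \FF^n}; \mu\llcorner_{\DD^n}, \nu\llcorner_{\FF^n})$ satisfies Assumption~\ref{ass:full_support}; and (d) the support of the $R^*$ attached to this reduced triple by Theorem~\ref{thm:link_P*_Q*_R*} equals $\mathcal{S} \cap (\DD^n \times \FF^n)$. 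The base case $n=0$ is trivial. For the induction step at $n$, the two SISP conditions on $M_n$ read: $R^*_{\mathrm{red}}$ vanishes on $(\DD^n \setminus M_n) \times F_{R^n}(M_n)$, and on $M_n \times \FF^n$ the supports of $R^*_{\mathrm{red}}$ and $R^n$ coincide. The first condition certifies that every entry zeroed in passing from $R^n$ to $R^{n+1}$ lies outside $\mathcal{S}$, which combined with (b) yields (b) at step $n+1$. The second condition forces the $R^n$-mass on $M_n \times F_{R^n}(M_n)$ to already equal $\1_{\mathcal{S}} R$ on that block, so again (b) propagates.

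\textbf{Reduction to the complement block.} The crux is to re-establish (c) and (d) after removing the SISP. Using the two SISP conditions one shows that $R^*_{\mathrm{red}}$ decomposes as a sum of measures supported on the two disjoint rectangles $M_n \times F_{R^n}(M_n)$ and $\DD^{n+1} \times \FF^{n+1}$. The restriction of $R^*_{\mathrm{red}}$ to the second block is then the candidate for the new reduced $R^*$. To identify it as such, I would invoke the optimality conditions of Proposition~\ref{prop:optimality_conditions_mu*_nu*}: the potentials $(\varphi,\psi)$ for the reduced problem at step $n$ satisfy $\varphi_i + \psi_j = 0$ on $\mathcal{S} \cap (\DD^n \times \FF^n)$ and $\geq 0$ on the corresponding $\mathcal{E}$; their restrictions to $\DD^{n+1}$ and $\FF^{n+1}$ therefore satisfy the analogous optimality conditions for the reduced problem at step $n+1$, and uniqueness of $(\mu^*,\nu^*,R^*)$ characterizes the candidate as the genuine reduced $R^*$. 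Assumption~\ref{ass:full_support} at step $n+1$ follows from the fact that the first marginal of this restricted $R^*$ is equivalent to $\mu\llcorner_{\DD^{n+1}}$ (because $\mu^*\sim\mu$ by Remark~\ref{rem:swap_mu_mu*}), forcing a positive $R$-entry in every row of $\DD^{n+1}$ inside $\FF^{n+1}$, and symmetrically for columns. This step is where I expect the main technical effort; the rest is bookkeeping.

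\textbf{Conclusion.} Because (c) is preserved, Lemma~\ref{lem:SISP_sets_exist} guarantees that as long as $\DD^n$ is nonempty a nonempty SISP exists, so the procedure cannot stop with $\DD^n \neq \emptyset$. Termination therefore occurs at some step $N$ with $\DD^N = \emptyset$ and necessarily $\FF^N = \emptyset$ (by the analogous argument applied to $\nu$ and the symmetric role of first/second marginal, or directly because $F_{R^n}(\DD^n) = \FF^n$ under Assumption~\ref{ass:full_support}), at which point invariant~(b) reads $R^N = \1_{\mathcal{S}} R$, as claimed.
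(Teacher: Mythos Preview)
Your overall architecture — termination via finite cardinality plus an inductive invariant tying the current state to $\mathcal{S}$ — matches the paper's proof closely. The paper maintains essentially the same invariants (its points 1--3 correspond to your (a)--(d)), and the termination/bookkeeping parts are handled the same way.

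The divergence is at the crux, your ``reduction to the complement block.'' The paper isolates this as a separate result (Proposition~\ref{prop:solution_restriction}) and proves it by a direct variational/substitution argument: if $A$ satisfies the isolation property~\eqref{eq:SISP_isolated}, and $\bar P$ were a strictly better solution of the restricted Schr\"odinger problem on $A\times F_R(A)$, one could splice $\bar P$ into $P^*$ on that block to beat $P^*$ in the full problem, contradicting optimality. This is short, self-contained, and requires no first-order analysis.

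Your proposed route — restricting the potentials $(\varphi,\psi)$ from Proposition~\ref{prop:optimality_conditions_mu*_nu*} and invoking ``uniqueness of $(\mu^*,\nu^*,R^*)$'' — has a gap. Proposition~\ref{prop:optimality_conditions_mu*_nu*} only establishes that \eqref{eq:optimality_conditions_mu*_nu*} are \emph{necessary} conditions; nowhere in the paper is it shown that any pair $(\varphi,\psi)$ satisfying those inequalities (together with the marginal feasibility constraints) must arise from the genuine $(\mu^*,\nu^*)$. Your appeal to uniqueness of the minimizer does not close this: uniqueness of the optimum does not by itself imply uniqueness of solutions to a set of first-order relations unless those relations are shown to be sufficient for optimality. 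You could likely rescue the argument by proving a KKT-type sufficiency statement for the convex problem defining $\mu^*$, but that is additional work not present in the paper, and even then you would still need a separate step to identify the restricted $P^*$ (not just its marginals and support) as the Schr\"odinger solution on the sub-block — which is exactly what Proposition~\ref{prop:solution_restriction} delivers directly.
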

\begin{figure}
	\centering 
	\includegraphics[scale=.15]{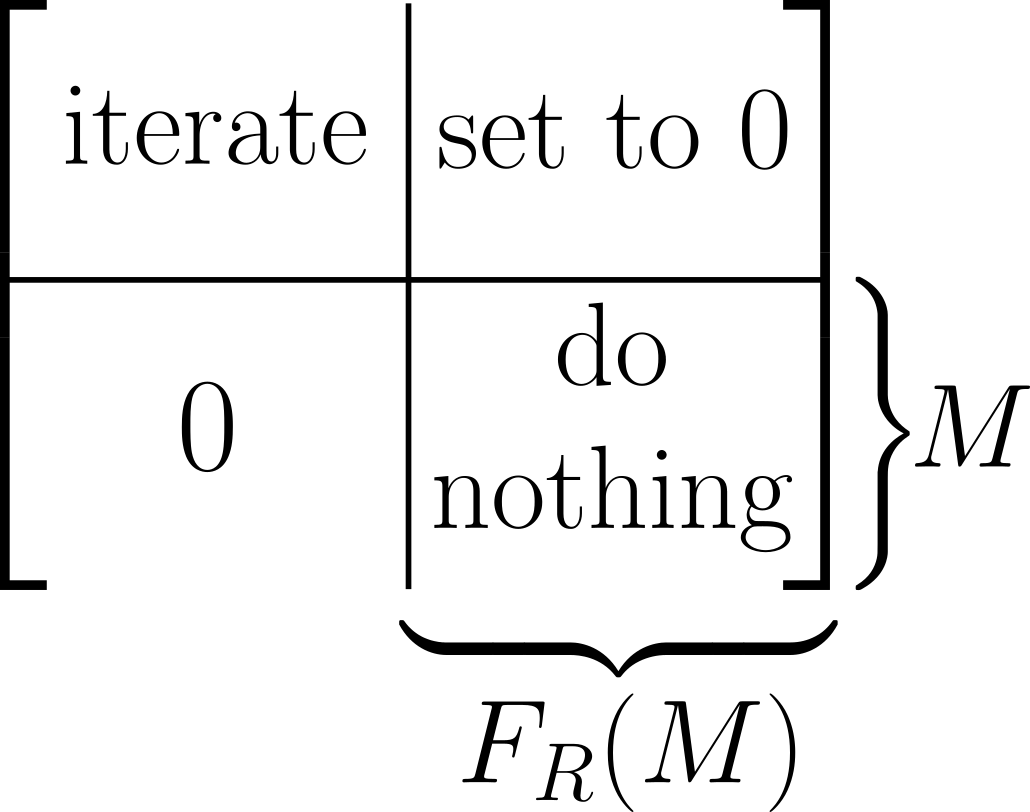}
	\caption{\label{fig:minimal_maximal_lambdaset} In the situation of~\cref{fig:SISP_set} where we have reordered the lines and columns, our procedure consists in recursively add zeros to $R$ at positions where we know thanks to~\eqref{eq:SISP_isolated} that $R^*$ admits a zero. We know that we did not forget any zero in $M \times F_R(M)$ thanks to~\eqref{eq:set_of_scalability}.}
\end{figure}

An illustration of the procedure at each iteration, is provided in \cref{fig:minimal_maximal_lambdaset}. An illustration of the full procedure in a specific non-scalable case is provided in \cref{fig:procedure}.

\begin{figure}
    \centering
	    \includegraphics[width=\textwidth]{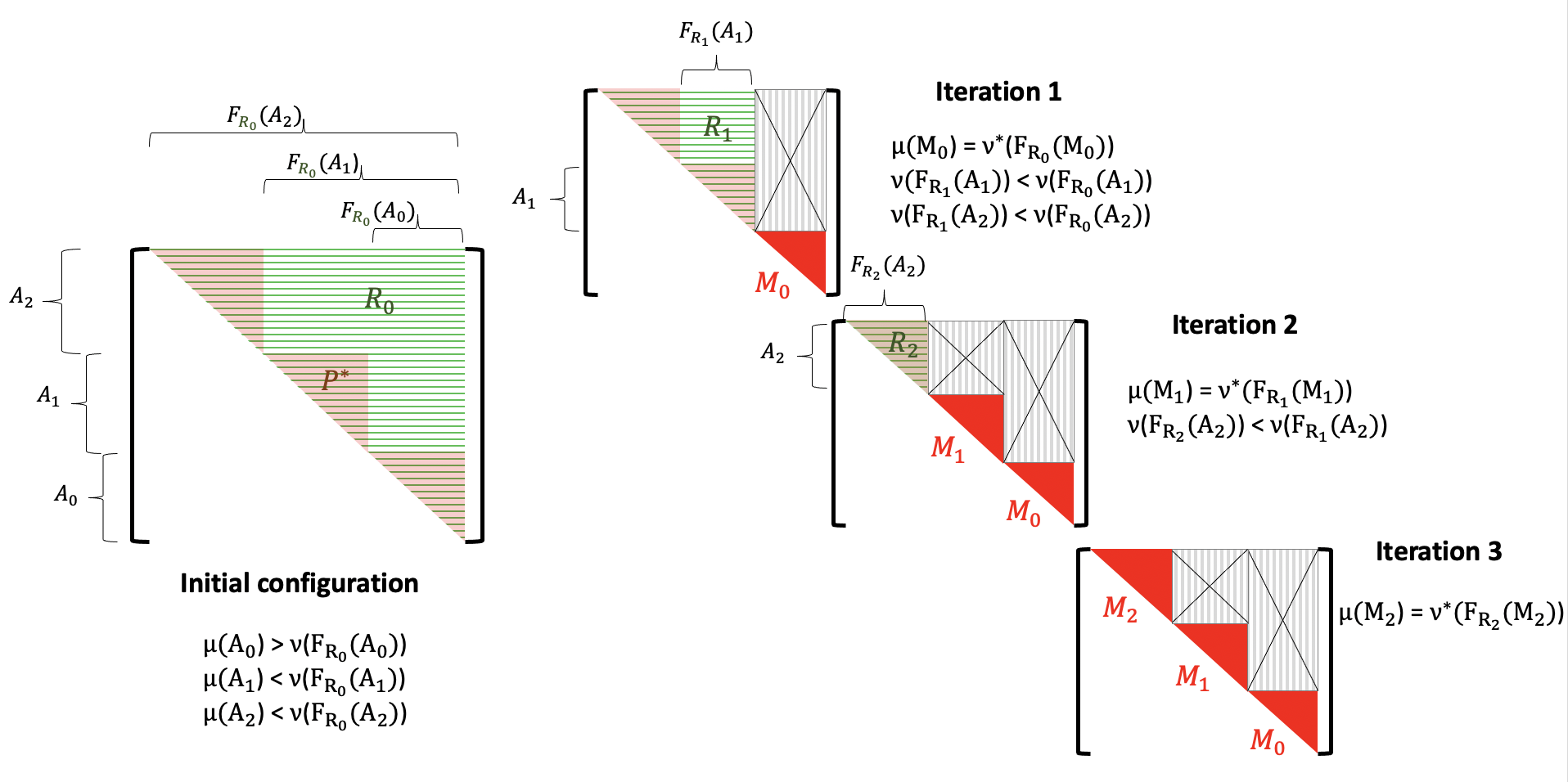}
    \caption{Illustration of the procedure of Proposition~\ref{prop:find_S} when the matrix $R$ is upper diagonal and $R^*$ a staircase matrix (see Appendix~\ref{app:no_solution} for more details). Only $A_0$ is a SISP set for $(R; \mu, \nu)$, but $A_1$ and $A_2$ are SISP sets for the restricted problems at the iterations 2 and 3. In this example, the procedure is stationary after 3 steps. In this example, the SISP set at each iteration is the unique maximal $\theta$-set existing for the reduced problem, as presented in Definition \ref{def:maximal_theta_set}. We remark that we can also build $\nu^*$, the second marginal of $P^*$ defined in Theorem~\ref{thm:convergence_sinkhorn}, on the successive SISP sets obtained along the procedure, thanks to the second step of the proof of Proposition \ref{lem:SISP_sets_exist} which ensures that the ratio $\frac{\nu}{\nu^*}$ is constant inside the maximal $\theta$-sets.}
    \label{fig:procedure}
\end{figure}

\begin{proof}
	In this proof, in order to lighten the notations, we call $R^n_r := R^n\llcorner_{\DD^n \times \FF^n}$. We will prove by inference the following facts. For all $n \in \N$:
	\begin{enumerate}
		\item Calling $\mathcal S^n$ the support of $R^n$, and therefore $\mathcal S^0$ the support of $R$,
		\begin{equation}
		\label{eq:Sn=S}\mathcal S^n \cap \Big( (\DD \times \FF) \backslash(\DD^n \times \FF^n)  \Big) = \mathcal S \cap \Big( (\DD \times \FF) \backslash(\DD^n \times \FF^n)  \Big) = \mathcal S^0 \cap \left( \bigcup_{k=0}^{n-1} M_k \times F_{R^k_r}(M_k) \right),
		\end{equation}
		 $\mathcal S \subset \mathcal S^n$, and $\mathcal S^n \cap (\DD^n \times \FF^n) = \mathcal S^0 \cap (\DD^n \times \FF^n)$.
		\item $\DD^n$ is empty if and only if $\FF^n$ is empty.
		\item If $\DD^n$ and $\FF^n$ are not empty, $(R^n_r; \mu\llcorner_{\DD^n}, \nu\llcorner_{\FF^n})$ satisfies Assumption~\ref{ass:full_support} and the matrices $P^{n,*}$, $Q^{n,*}$ and $R^{n,*}$ associated with $(R^n_r; \mu\llcorner_{\DD^n}, \nu\llcorner_{\FF^n})$ through Theorems~\ref{thm:convergence_sinkhorn} and~\ref{thm:link_P*_Q*_R*} are the restrictions of $P^*$, $Q^*$ and $R^*$ to $\DD^n \times \FF^n$.
	\end{enumerate}

	This is enough to prove the proposition: if the conclusion of the inference is true, then by the third point and Lemma~\ref{lem:SISP_sets_exist}, as long as $\DD^n$ and $\FF^n$ are nonempty, $(R^n_r; \mu\llcorner_{\DD^n}, \nu\llcorner_{\FF^n})$ admits a SISP set $M_n$, which is not empty by definition. Therefore, $(\DD^n)$ is strictly decreasing in the sense of inclusion as long as it is not empty, so it has to reach $\emptyset$ at a certain rank $N$. At this rank, because of the first point, we also have $\FF^N = \emptyset$, and because of~\eqref{eq:Sn=S}, $\mathcal S^N = \mathcal S$, so that the conclusion follows. So let us prove the inference.
	
	\bigskip
	
	At rank $0$, everything is clear, so let us assume that the conclusions of points one, two and three hold at rank $n$, and prove them at rank $n+1$. First, if $\DD^n$ is empty, by assumption $\FF^{n}$ is empty as well, so we have reached a stationary point, and everything is still true at rank $n+1$. So we can assume without loss of generality that $\DD^n$, and hence $\FF^n$, are nonempty. By assumption, $(R^n_r; \mu\llcorner_{\DD^n}, \nu\llcorner_{\FF^n})$ satisfies Assumption~\ref{ass:full_support}, and by Lemma~\ref{lem:SISP_sets_exist}, we can find a SISP set $M_n$. In this context, let us check the points one by one at rank $n+1$.
	
	\bigskip
	\noindent \underline{First point}. Observing that $\DD^n$ is the disjoint union of $M_n$ and $\DD^{n+1}$, and that $\FF^n$ is the disjoint union of $F_{R^n_r}(M_n)$ and $\FF^{n+1}$, we have
	\begin{align*}
 (\DD \times \FF) &\backslash(\DD^{n+1} \times \FF^{n+1})   \\
 &=  \Big( (\DD \times \FF) \backslash(\DD^n \times \FF^n) \Big) \cup \big( \DD^{n+1} \times F_{R^n_r}(M_n) \big) \cup \big( M_n \times \FF^{n+1} \big)  \cup \big( M_n \times F_{R^n_r}(M_n) \big). 
	\end{align*}
	So in order to prove~\eqref{eq:Sn=S} at rank $n+1$, we need to show that
	\begin{gather}
   \label{eq:nothing_about_before}\mathcal S^{n+1} \cap \Big( (\DD \times \FF) \backslash(\DD^n \times \FF^n) \Big) =  \mathcal S^{n} \cap \Big( (\DD \times \FF) \backslash(\DD^n \times \FF^n) \Big),\\
    \label{eq:old_zeroes}\mathcal S^{n+1} \cap \big( M_n \times \FF^{n+1} \big) = \mathcal S \cap \big( M_n \times \FF^{n+1} \big) = \emptyset,\\
    \label{eq:new_zeroes}\mathcal S^{n+1} \cap  \big( \DD^{n+1} \times F_{R^n_r}(M_n) \big) = \mathcal S \cap  \big( \DD^{n+1} \times F_{R^n_r}(M_n) \big) = \emptyset,\\
    \label{eq:set_of_scalability} \mathcal S^{n+1} \cap \big( M_n \times F_{R^n_r}(M_n) \big) = \mathcal S \cap \big( M_n \times F_{R^n_r}(M_n) \big) = \mathcal S^{0} \cap \big( M_n \times F_{R^n_r}(M_n) \big).
	\end{gather}
	To prove these equalities, the main tool is the following formula which is a direct consequence of the construction:
	\begin{equation}
	\label{eq:Sn+1}
	\mathcal S^{n+1} = \mathcal S^n \backslash \big( \DD^{n+1} \times F_{R^n_r}(M_n) \big).
	\end{equation}
	
	With this formula at hand, we see that~\eqref{eq:nothing_about_before} follows from $\DD^{n+1} \times F_{R^n_r}(M_n) \subset \DD^n \times \FF^n$. We also deduce very easily that $\mathcal S^{n+1} \cap (\DD^{n+1} \times \FF^{n+1}) = \mathcal S^n \cap (\DD^{n+1} \times \FF^{n+1}) = \mathcal S^0\cap(\DD^{n+1} \times \FF^{n+1})$, where the last equality follows from the first point at rank $n$.
	
	Then, to prove~\eqref{eq:old_zeroes}, as both $\mathcal S$ (by assumption) and $\mathcal S^{n+1}$ (by~\eqref{eq:Sn+1}) are included in~$\mathcal S^n$, it suffices to show that $\mathcal S^{n} \cap ( M_n \times \FF^{n+1} ) = \emptyset$. But that last assertion follows from the definition of $\FF^{n+1} = \FF^n \backslash F_{R^n_r}(M_n)$: these are precisely the columns where $R^n_r$ has only zero entries on the intersection with the lines $M_n$. 
	
	To prove~\eqref{eq:new_zeroes}, let us observe that the equality $\mathcal S^{n+1} \cap  ( \DD^{n+1} \times F_{R^n_r}(M_n) ) = \emptyset$ is a direct consequence of~\eqref{eq:Sn+1}. The other equality, namely, $\mathcal S \cap  ( \DD^{n+1} \times F_{R^n_r}(M_n) ) = \emptyset$ follows from the fact that $M_n$ is a SISP set for $(R^n_r; \mu\llcorner_{\DD^n}, \nu\llcorner_{\FF^n})$, so that~\eqref{eq:SISP_isolated} applies with $M_n$ instead of $A$, $R^n_r$ instead of $R$, $\DD^n$ instead of $\DD$ and $R^{n,*} = R^*\llcorner_{\DD^n \times \FF^n}$ instead of $R^*$ (here, we use the point three at rank $n$). Notice that as $\mathcal S \cap  ( \DD^{n+1} \times F_{R^n_r}(M_n) ) = \emptyset$ and $\mathcal S \subset \mathcal S^{n}$, by~\eqref{eq:Sn+1}, we have also proved that $\mathcal S \subset \mathcal S^{n+1}$.
	
	Finally, to prove~\eqref{eq:set_of_scalability}, as $\mathcal S \subset \mathcal S^{n+1} \subset \mathcal S^0$, we just need to prove that $\mathcal S \cap ( M_n \times F_{R^n_r}(M_n) ) = \mathcal S^{0} \cap ( M_n \times F_{R^n_r}(M_n) )$. But this is a direct consequence of the fact that $M_n$ is a SISP set for $(R^n_r; \mu\llcorner_{\DD^n}, \nu\llcorner_{\FF^n})$, so that~\eqref{eq:support_R_R*} applies with $R^n_r$ instead of $R$, $R^{n,*}$ instead of $R^*$, $M_n$ instead of $A$ and $\FF^n$ instead of $\FF$.
	
		\bigskip
	
	\noindent \underline{Second point}. By definition, $\DD^{n+1}$ is empty if and only if $M_n = \DD^n$. So if $\DD^{n+1} = \emptyset$, then by Assumption~\ref{ass:full_support} applied to $(R^n_r; \mu\llcorner_{\DD^n}, \nu\llcorner_{\FF^n})$, we clearly have $F^{R^n_r}(M_n) = \FF^n$ and so $\FF^{n+1} = \emptyset$. On the other hand, if $\DD^{n+1} \neq \emptyset$ we have 
	\begin{align*}
	0< \mu(\DD^{n+1}) &=  P^*\Big(\DD^{n+1} \times \FF \Big) =   P^*\Big(\DD^{n+1} \times \FF^n \Big) \\
	&=  P^*\Big(\DD^{n+1} \times F_{R^n_r}(M_n)\Big) + P^*\Big(\DD^{n+1} \times \FF^{n+1}\Big)\\
	&= P^*\Big(\DD^{n+1} \times \FF^{n+1}\Big),
	\end{align*}
	where the inequality comes from Assumption~\ref{ass:full_support}, the second equality is an easy consequence of~\eqref{eq:Sn=S} at rank $n$, and the last one comes from the definition of SISP sets (see~\eqref{eq:SISP_isolated}) and from the fact that $P^*$ and $R^*$ has the same support. So $\FF^{n+1}$ cannot be empty, as announced.	
	
	\bigskip
	
	\noindent\underline{Third point}. To check that $(R^{n+1}_r; \mu\llcorner_{\DD^{n+1}}, \nu\llcorner_{\FF^{n+1}})$ satisfies Assumption~\ref{ass:full_support}, we need only need to show that the support of $\mu^{R^{n+1}}\llcorner_{\DD^{n+1}}$ is $\DD^{n+1}$ and the support of $\nu^{R^{n+1}}\llcorner_{\FF^{n+1}}$ is $\FF^{n+1}$. But as we already proved that $\mathcal S \subset \mathcal S^{n+1}$, we know that $P^* \ll R^{n+1}$ and $Q^* \ll R^{n+1}$, so the conclusion follows from the fact that $\mu$ and $\nu$ have full support by Assumption~\ref{ass:full_support} applied to $(R; \mu,\nu)$. The last thing to check, namely that the matrices $P^{n+1,*}$, $Q^{n+1,*}$ and $R^{n+1,*}$ associated with $(R^{n+1}_r; \mu\llcorner_{\DD^{n+1}}, \nu\llcorner_{\FF^{n+1}})$ through Theorems~\ref{thm:convergence_sinkhorn} and~\ref{thm:link_P*_Q*_R*} are the restrictions of $P^*$, $Q^*$ and $R^*$ to $\DD^{n+1} \times \FF^{n+1}$ is a direct consequence of Proposition~\ref{prop:solution_restriction} below (that we wanted to separate to the rest of the proof because we will use it again later), and of~\eqref{eq:new_zeroes}.
	\end{proof}
\begin{prop}
	\label{prop:solution_restriction}
	Let $R \in \mathcal M_+(\DD \times \FF)$, $\mu \in \mathcal M_+(\DD)$ and $\nu\in \mathcal M_+(\FF)$ satisfy Assumption~\ref{ass:condition_sinkhorn_well_defined}. Let $P^*$, $Q^*$ and $R^*$ be the matrices associated with the problem $\Sch(R; \mu,\nu)$ by Theorems~\ref{thm:convergence_sinkhorn} and~\ref{thm:link_P*_Q*_R*}. Finally, let $A \subset \DD$ be such that
	\begin{equation}
	\label{eq:prop_isolated}
	R^*\Big( (\DD \backslash A) \times F_R(A) \Big) = 0.
	\end{equation}
	
	Then (with slightly sloppy notations), $P^*\llcorner_{A \times F_R(A)}$, $Q^*\llcorner_{A \times F_R(A)}$ and $R^*\llcorner_{A \times F_R(A)}$ are the matrices associated with the restricted problem $\Sch(R\llcorner_{A \times F_R(A)}, \mu \llcorner_{A}, \nu\llcorner_{F_R(A)})$ by Theorem~\ref{thm:convergence_sinkhorn} and~\ref{thm:link_P*_Q*_R*}.
	
	Similarly, calling $A' := \DD \backslash A$ and $F':= \FF \backslash F_R(A)$, $P^*\llcorner_{A' \times F'}$, $Q^*\llcorner_{A' \times F'}$ and $R^*\llcorner_{A' \times F'}$ are the matrices associated with the restricted problem $\Sch(R\llcorner_{A' \times F'}, \mu \llcorner_{A'}, \nu\llcorner_{F'})$ by Theorem~\ref{thm:convergence_sinkhorn} and~\ref{thm:link_P*_Q*_R*}.
\end{prop}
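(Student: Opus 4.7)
The plan is to exploit the block-diagonal structure of $P^*$, $Q^*$ and $R^*$ implied by the hypothesis, and then use an extension argument to transfer the optimality of $\mu^*$, $\nu^*$, $P^*$, $Q^*$ from the full problem to the restricted one.

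First, I would set $R_A := R\llcorner_{A \times F_R(A)}$, $\mu_A := \mu\llcorner_A$, $\nu_A := \nu\llcorner_{F_R(A)}$ and likewise $R_{A'}$, $\mu_{A'}$, $\nu_{A'}$ with $A':=\DD\setminus A$ and $F':=\FF\setminus F_R(A)$. By the very definition~\eqref{push forward subset} of $F_R(A)$, $R$ vanishes on $A \times F'$, hence so do $P^*$, $Q^*$ and $R^*$ (all absolutely continuous w.r.t.\ $R$). Together with the hypothesis~\eqref{eq:prop_isolated} and the fact that $P^*\sim Q^*\sim R^*$ (Proposition~\ref{prop:optimality_conditions_mu*_nu*} and Theorem~\ref{thm:link_P*_Q*_R*}), this shows that $P^*$, $Q^*$, $R^*$ are supported in $(A\times F_R(A))\cup(A'\times F')$. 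In particular, their restrictions to these two blocks have marginals $(\mu_A,\nu^*\llcorner_{F_R(A)})$, $(\mu^*\llcorner_A,\nu_A)$, etc., and all are absolutely continuous w.r.t.\ the corresponding restrictions of $R$, so Assumption~\ref{ass:condition_sinkhorn_well_defined} holds for the two restricted triples.

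Next I would identify the modified marginals of the restricted problem via an extension argument. Let $\mu^*_A$, $\nu^*_{F_R(A)}$ denote the objects~\eqref{eq:def_mu*_nu*} for $(R_A;\mu_A,\nu_A)$. Given any competitor $\bar Q_A$ for the restricted analogue of the first line of~\eqref{eq:def_mu*_nu*}, the coupling $\bar Q := \bar Q_A + Q^*\llcorner_{A'\times F'}$ is a competitor for the full problem since its second marginal is $\nu_A+\nu\llcorner_{F'}=\nu$ and it has finite entropy w.r.t.\ $R$ by the block-diagonal structure. The additivity identity
\begin{equation*}
H(\mu^{\bar Q}\mid\mu) = H(\mu^{\bar Q_A}\mid\mu_A) + H(\mu^{Q^*\llcorner_{A'\times F'}}\mid\mu_{A'})
\end{equation*}
combined with the optimality of $\mu^*$ for the full problem then forces $H(\mu^{\bar Q_A}\mid\mu_A) \ge H(\mu^*\llcorner_A\mid\mu_A)$, and uniqueness yields $\mu^*_A = \mu^*\llcorner_A$. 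The same reasoning gives $\nu^*_{F_R(A)} = \nu^*\llcorner_{F_R(A)}$, and the analogous equalities on the other block.

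Finally, the same extension principle applied one level up identifies $P^*$ on the block. Any competitor $\bar P_A$ for $\Sch(R_A;\mu_A,\nu^*\llcorner_{F_R(A)})$ extends to $\bar P := \bar P_A + P^*\llcorner_{A'\times F'} \in \Pi(\mu,\nu^*)$ with
\begin{equation*}
H(\bar P\mid R) = H(\bar P_A\mid R_A) + H(P^*\llcorner_{A'\times F'}\mid R_{A'}),
\end{equation*}
so optimality of $P^*$ for the full problem forces $P^*\llcorner_{A\times F_R(A)}$ to minimise $H(\,\cdot\,\mid R_A)$ on $\Pi(\mu_A,\nu^*\llcorner_{F_R(A)})$, and the uniqueness in Theorem~\ref{thm:convergence_sinkhorn} identifies it with the $P^*$ of the restricted problem. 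The same argument applied to $Q^*$ handles the first marginal block, and $R^*$ follows automatically from formula~\eqref{eq:formula_R*} of Theorem~\ref{thm:link_P*_Q*_R*} (componentwise geometric mean) applied on the block. The second part of the statement, concerning $A'\times F'$, is obtained by the symmetric extension argument with the roles of the two blocks swapped; this is where I expect the only subtlety, namely checking that~\eqref{eq:prop_isolated} for $A$ plus vanishing of $R$ on $A\times F'$ is enough to guarantee the corresponding block-decoupling for the complement, which is immediate from the support considerations in the first paragraph.
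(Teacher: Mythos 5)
Your proposal is correct and follows essentially the same route as the paper: exploit the block-diagonal support of $P^*$, $Q^*$, $R^*$ forced by~\eqref{eq:prop_isolated} and by $R$ vanishing on $A\times F'$, extend any competitor of a restricted problem to a competitor of the full problem by gluing it to the restriction of the optimal matrix on the complementary block, and conclude via additivity of the relative entropy over disjoint blocks together with optimality and uniqueness (first for the modified marginals, then for the Schr\"odinger problems, with $R^*$ recovered from the geometric-mean formula). The paper's proof phrases this by replacing the entries of $P^*$ on $A\times F_R(A)$ by those of the restricted optimizer, which is the same gluing argument.
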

\begin{proof}
	We show the result in the case of $P^*$, related to the "block" $A \times F_R(A)$. The case of $Q^*$ is similar, the case of $R^*$ easily follows from the two previous ones, and the similar results on $A'\times F'$ follow the same lines. Let $\nu^*$ be defined by~\eqref{eq:def_mu*_nu*}. The first thing to prove is
	\begin{align*}
	 &\nu^*\llcorner_{F_R(A)}\\
	 &\hspace{5pt}=\argmin \Big\{ H(\bar \nu | \nu\llcorner_{F_R(A)})\, \Big| \, \bar \nu = \nu^P \mbox{ for some } P  \mbox{ with } H(P|R_{A \times F_R(A)}) < + \infty \mbox{ and } \mu^P = \mu\llcorner_{A} \Big\}.
	\end{align*}
	The measure $\nu^*\llcorner_{F_R(A)}$ is a competitor for the problem in the r.h.s.\ because it corresponds to $P := P^*\llcorner_{A \times F_R(A)}$. Let us show that it is the optimizer. To do this, we call $\bar \nu$ the optimizer, and we show that $\nu^*_{F_R(A)} = \bar \nu^*$. Let us consider $\bar P$ a $P$ corresponding to $\bar \nu^*$ in the problem above, $\bar P^*$ the matrix obtained by replacing the entries of $P^*$ on $A \times F_R(A)$ by the entries of $\bar P$, and $\bar \nu^* := \nu^{\bar P^*}$. We have
	\begin{equation*}
	H(\bar \nu^* | \nu ) = H(\bar \nu | \nu \llcorner_{F_R(A)}) + H(\nu^*\llcorner_{F'} | \nu\llcorner_{F'}) \leq H( \nu^*\llcorner_{F_R(A)} | \nu \llcorner_{F_R(A)}) + H(\nu^*\llcorner_{F'} | \nu\llcorner_{F'}) = H(\nu^* | \nu),
	\end{equation*}
	where the inequality, being a consequence of the optimality of $\bar \nu^*$, is an equality if and only if $\nu^*_{F_R(A)} = \bar \nu^*$. But by optimality of $\nu^*$ in~\eqref{eq:def_mu*_nu*} this inequality is indeed an equality, and therefore $\nu^*_{F_R(A)} = \bar \nu^*$.
	
	It remains to show that $P^*\llcorner_{A \times F_R(A)}$ is the solution of $\Sch(R\llcorner_{A \times F_R(A)}, \mu\llcorner_{A},\nu^*\llcorner_{F_R(A)})$. For this, let us consider $\bar P$ the solution of $\Sch(R\llcorner_{A \times F_R(A)}, \mu\llcorner_{A},\nu^*\llcorner_{F_R(A)})$, and $\bar P^*$ the matrix obtained by replacing the entries of $P^*$ on $A \times F_R(A)$ by the entries of $\bar P$. Because of~\eqref{eq:prop_isolated}, we have
	\begin{align*}
	H(\bar P^* | R) &= H(\bar P | R\llcorner_{A \times F_R(A)}) + H(P^*\llcorner_{A' \times F'} | R\llcorner_{A' \times F'}) \\
	&\leq H( P^*\llcorner_{A \times F_R(A)} | R\llcorner_{A \times F_R(A)}) + H(P^*\llcorner_{A' \times F'} | R\llcorner_{A' \times F'}) = H(P^* | R^*),
	\end{align*}
	where the inequality is a consequence of the optimality of $\bar P$, and is an equality if and only if $\bar P = P^*\llcorner_{A \times F_R(A)}$. But by optimality of $P^*$, this inequality is indeed an equality, and we conclude that $\bar P = P^*\llcorner_{A \times F_R(A)}$. The proposition is proved.
\end{proof}

\bigskip

Now, we want to prove Lemma~\ref{lem:SISP_sets_exist}. To do this, we introduce a new class of subsets of $\DD$, associated with a triple $(R;\mu,\nu)$.
\begin{defn}
\label{def:maximal_theta_set}
	Let $R \in \mathcal M_+(\DD \times \FF)$, $\mu \in \mathcal M_+(\DD)$ and $\nu\in \mathcal M_+(\FF)$ satisfying Assumption~\ref{ass:full_support}. The maximal $\theta$ associated to $(R; \mu,\nu)$ is defined by:
	\begin{equation}
	\theta_m := \max_{\substack{A \subset \DD \\ A \neq \emptyset}} \frac{\mu(A)}{\nu(F_{R}(A))}.
	\label{eq:lambdamax}
	\end{equation}
	We say that $A \subset \DD$ is a maximal $\theta$-set for $(R;\mu,\nu)$ if $A$ is a maximizer of~\eqref{eq:lambdamax}. We say that it is a \emph{smallest} maximal $\theta$-set if in addition, it is a minimal element in the sense of inclusion among all maximal $\theta$-sets associated with $(R; \mu,\nu)$.
\end{defn}

As maximal $\theta$-sets are optimizers of a finite function (thanks to Assumption~\ref{ass:full_support}) on a finite set (the set of all nonempty subsets of $\DD$), any triple $(R;\mu,\nu)$ satisfying Assumption~\ref{ass:full_support} admits at least one maximal $\theta$-set. The set of all maximal $\theta$-sets being itself finite, we know that there exists at least one minimal element in this set, so that smallest maximal $\theta$-sets always exist under Assumption~\ref{ass:full_support}. Hence, Lemma~\ref{lem:SISP_sets_exist} is an obvious consequence of the following proposition, whose proof heavily relies on the optimality conditions stated in Proposition~\ref{prop:optimality_conditions_mu*_nu*}.
\begin{prop}
	Let $R \in \mathcal M_+(\DD \times \FF)$, $\mu \in \mathcal M_+(\DD)$ and $\nu\in \mathcal M_+(\FF)$ satisfying Assumption~\ref{ass:full_support}. A smallest maximal $\theta$-set for $(R; \mu,\nu)$ is a SISP set for $(R; \mu,\nu)$.
\end{prop}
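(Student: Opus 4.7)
Setting $\theta := \theta_m$, $B := F_R(A)$, $\DD_1 := \DD \setminus A$, and $\FF_1 := \FF \setminus B$, the plan is to build $(\mu^*, \nu^*, P^*, Q^*)$ explicitly via a block decomposition supported on $(A \times B) \sqcup (\DD_1 \times \FF_1)$, and then to read off the SISP conditions~\eqref{eq:SISP_isolated}--\eqref{eq:support_R_R*} from this decomposition. The first step is to verify that the restricted problem $\Sch(R\llcorner_{A\times B};\, \mu\llcorner_A/\theta,\, \nu\llcorner_B)$ is scalable. Indeed, total masses agree since $\mu(A)/\theta = \nu(B)$; the smallest maximality of $A$ gives the strict inequality $\mu(A')/\theta < \nu(F_R(A'))$ for every nonempty $A' \subsetneq A$; and the equality case $A' = A$ is compatible with Lemma~\ref{lem:non_degenerate_case} because the restricted marginals satisfy $\mu^R(A) = \nu^R(B)$. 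Let $P'$ denote the corresponding scalable solution; its support equals $\mathrm{supp}(R\llcorner_{A \times B})$.

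Next, I would apply Theorem~\ref{thm:convergence_sinkhorn} to the reduced problem $(R\llcorner_{\DD_1 \times \FF_1};\, \mu\llcorner_{\DD_1}, \nu\llcorner_{\FF_1})$ (after checking Assumption~\ref{ass:condition_sinkhorn_well_defined} for it) to produce $(\mu^*_1, \nu^*_1, P^*_1, Q^*_1)$, and then paste to form candidates
\begin{equation*}
\tilde\mu^* := \tfrac{1}{\theta}\mu\llcorner_A \oplus \mu^*_1, \quad
\tilde\nu^* := \theta\,\nu\llcorner_B \oplus \nu^*_1, \quad
\tilde P^* := \theta\, P' \oplus P^*_1, \quad
\tilde Q^* := P' \oplus Q^*_1,
\end{equation*}
where $\oplus$ denotes extension by zero on the off-diagonal blocks $A \times \FF_1$ and $\DD_1 \times B$. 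A direct marginal computation confirms $\mu^{\tilde P^*} = \mu$, $\nu^{\tilde P^*} = \tilde\nu^*$, $\mu^{\tilde Q^*} = \tilde\mu^*$, $\nu^{\tilde Q^*} = \nu$, so $\tilde\mu^*$ and $\tilde\nu^*$ are competitors in the problems~\eqref{eq:def_mu*_nu*}.

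The heart of the proof is then to identify $(\tilde\mu^*, \tilde\nu^*) = (\mu^*, \nu^*)$, which I would do by verifying the optimality characterization of Proposition~\ref{prop:optimality_conditions_mu*_nu*} for the candidate potentials $\tilde\varphi_i := \log(\tilde\mu^*_i/\mu_i)$ and $\tilde\psi_j := \log(\tilde\nu^*_j/\nu_j)$. By construction these are constant $-\log\theta$ on $A$ and $\log\theta$ on $B$, and coincide with the reduced-problem potentials on $\DD_1$ and $\FF_1$. The equation $\tilde\varphi_i + \tilde\psi_j = 0$ on $\mathrm{supp}(\tilde P^*)$ is immediate on $A \times B$ (using $\mathrm{supp}(P') = \mathrm{supp}(R\llcorner_{A\times B})$) and on $\DD_1 \times \FF_1$ by applying Proposition~\ref{prop:optimality_conditions_mu*_nu*} to the reduced problem; the inequality $\tilde\varphi_i + \tilde\psi_j \geq 0$ on $\mathcal E$ is automatic on $(A \times B) \cup (\DD_1 \times \FF_1)$ and vacuous on $A \times \FF_1$ (where $R$ already vanishes). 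The only non-trivial case is $(i,j) \in (\DD_1 \times B) \cap \mathcal E$, where the required inequality amounts to $\mu^*_{1,i} \geq \mu_i/\theta$. Applying the maximality of $A$ to $A \cup \{i\}$ gives $\mu_i \leq \theta\,\nu(F_R(\{i\}) \setminus B)$, and combining this with the Hall-type feasibility characterization of $\mu^*_1$ in the reduced problem (which forces $\mu^*_{1,i}$ to absorb a proportional share of $\nu(F_R(\{i\}) \setminus B)$) delivers the required bound. Strict convexity of $H$ then forces $(\tilde\mu^*, \tilde\nu^*) = (\mu^*, \nu^*)$, and hence $\tilde P^* = P^*$, $\tilde Q^* = Q^*$.

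With these identifications, both SISP properties read off at once from the block structure of $P^*$: $P^*$ vanishes on $(\DD \setminus A) \times B$ by construction, giving~\eqref{eq:SISP_isolated} via $R^* \sim P^*$; and on $A \times B$ the support of $P^*$ coincides with that of $R$ (by scalability of $P'$), while $R$ already vanishes on $A \times \FF_1$, yielding~\eqref{eq:support_R_R*}. The hardest step, and the main obstacle I anticipate, is precisely the quantitative inequality on $(\DD_1 \times B) \cap \mathcal E$: this is the only point where the \emph{smallest} (rather than merely maximal) nature of $A$ is used in an essential way, and it requires a careful translation of the combinatorial maximality of $A \cup \{i\}$ into a lower bound on the reduced-problem potential $\mu^*_{1,i}/\mu_i$.
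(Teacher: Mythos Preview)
Your approach is genuinely different from the paper's, and in one respect more ambitious: you try to \emph{construct} $(\mu^*,\nu^*,P^*,Q^*)$ block-diagonally and then verify optimality, whereas the paper takes the existing $(\mu^*,\nu^*)$ from Theorem~\ref{thm:convergence_sinkhorn}, proves the key identity $\max_i \mu_i/\mu^*_i = \max_j \nu^*_j/\nu_j = \theta_m$ (Steps~1--2), and reads the SISP properties off from there. The paper's route avoids any recursion into a reduced problem and never needs to identify what $\mu^*$ looks like on $\DD\setminus A$.

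Your outline has two real gaps. First, you invoke Proposition~\ref{prop:optimality_conditions_mu*_nu*} as an \emph{optimality characterization}, but in the paper it is only stated and proved as a \emph{necessary} condition on the genuine $(\mu^*,\nu^*)$. To use it in the reverse direction you would need a separate sufficiency argument (essentially a convex duality statement for the problems~\eqref{eq:def_mu*_nu*}), which is plausible but not free. Second, and more seriously, the justification of the crucial inequality $\mu^*_{1,i}\ge \mu_i/\theta$ on $(\DD_1\times B)\cap\mathcal E$ does not go through as written. The step ``the Hall-type feasibility characterization of $\mu^*_1$ forces $\mu^*_{1,i}$ to absorb a proportional share of $\nu(F_R(\{i\})\setminus B)$'' is not a proof: Hall-type conditions give upper bounds of the form $\nu(B')\le \mu^*_1(D_{R_1}(B'))$, which do not translate into a pointwise lower bound on a single coordinate $\mu^*_{1,i}$. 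The correct argument is that the maximal $\theta$ for the reduced problem satisfies $\theta_{m,1}\le\theta$ (this does follow from maximality of $A$, by considering $A\cup A'$ for $A'\subset\DD_1$), and then one needs the identity $\max_{i\in\DD_1}\mu_i/\mu^*_{1,i}=\theta_{m,1}$ for the reduced problem. But that identity is exactly Steps~1--2 of the paper's proof, applied one level down; so your recursion ends up needing the paper's key computation anyway, and you have not supplied it.
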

\begin{proof}
	Let $\mu^*$ and $\nu^*$ be defined by~\eqref{eq:def_mu*_nu*}. We first define the two following quantities
	\begin{equation*}
	\theta^\DD := \max_{i \in \DD} \frac{\mu_i}{\mu^*_i}, \qquad \theta^\FF := \max_{j \in \FF} \frac{\nu_j^*}{\nu_j}.
	\end{equation*}
	(Recall that under Assumption~\ref{ass:full_support}, by Remark~\ref{rem:swap_mu_mu*}, $\mu^*$ and $\nu^*$ defined by~\eqref{eq:def_mu*_nu*} have full support.) Then, we define the two following sets, that are nonempty subsets of $\DD$ and $\FF$ respectively:
	\begin{equation*}
	\overline M := \Big\{  x_i \in \DD \mbox{ s.t.\ } \frac{\mu_i}{\mu^*_i} = \theta^\DD \Big\}, \qquad \overline F := \Big\{  y_j \in \FF \mbox{ s.t.\ } \frac{\nu^*_j}{\nu_j} = \theta^\FF \Big\}.
	\end{equation*}
	The main argument of the proof consists in showing that $\theta^\DD $ and $ \theta^\FF$ coincide with $\theta_m$, the maximal $\theta$ for $(R; \mu,\nu)$. Even if $\overline M$ is not a smallest maximal-$\theta$ set in general (more precisely, it is a maximal $\theta$-set that is not minimal in general), we show at Step 3 below how this information allows to conclude. As before, $P^*$ is the matrix defined by~\eqref{eq:def_P*_Q*}.
	
	\bigskip
	
 \noindent \underline{Step 1}: $\theta^\DD = \theta^\FF$.

Let $x_i \in \overline M$ and $y_j \in \FF$ be such that $P^*_{ij}>0$ (such a $j$ exists thanks to Assumption~\ref{ass:full_support}). By the first line of~\eqref{eq:optimality_conditions_mu*_nu*}, we have
\begin{equation}
\label{eq:optimality_condition_proof_bartheta}
\frac{\mu^*_i}{\mu_i}\frac{\nu^*_j}{\nu_j} = 1,
\end{equation}
which implies that $\nu_j^* / \nu_j = \theta^\DD$, and hence that $\theta^\FF \geq \theta^\DD$. The other inequality is proved in the same way, and the result follows. From now on, we call
\begin{equation*}
\overline \theta := \theta^\DD = \theta^\FF.
\end{equation*}

\noindent \underline{Step 2}: $\overline \theta= \theta_m$.

First, $\overline \theta \geq \theta_m$. Indeed, for any $A\subset \DD$, by Theorem~\eqref{thm:CNS}, as $\Sch(R; \mu,\nu^*)$ is at least approximately scalable, we have $\mu(A) \leq \nu^*(F_R(A))$. But on the other hand, by definition of $\overline \theta$, we have $\nu^* \leq \overline \theta \nu$, so that actually, $\mu(A) \leq \overline \theta \nu(F_R(A))$, and hence $\overline \theta \geq \theta_m$.

Also, $\overline \theta \leq \theta_m$. To see this, let us first observe that $P^*((\DD \backslash \overline M) \times \overline F) = P^*( \overline M \times (\FF \backslash\overline F))=0$. This is because if $x_i,y_j$ are such that $P^*_{ij}>0$, still by~\eqref{eq:optimality_condition_proof_bartheta}, $\mu_i / \mu^*_i = \overline \theta$ if and only if $\nu^*_j / \nu_j = \overline \theta$, so that $x_i \in \overline M$ if and only if $y_j \in \overline F$. Therefore, on the one hand, $F_R(\overline M) \subset \overline F$, and on the other hand, projecting on both marginals:
\begin{align*}
\mu(\overline M) &= P^*(\overline M \times \FF ) = P^*(\overline M \times \overline F ) + \underbrace{ P^*( \overline M \times (\FF \backslash\overline F))}_{=0} \\
&= P^*(\overline M \times \overline F ) + \underbrace{P^*((\DD \backslash \overline M) \times \overline F)}_{=0} = P^*(\DD \times \overline F) = \nu^*(\overline F).
\end{align*}
As by definition of $\overline F$ and $\overline \theta$, $\nu^*(\overline F) = \overline \theta\nu(\overline F)$, we conclude that 
\begin{equation*}
\overline \theta \nu(F_R(\overline M)) \leq \overline \theta \nu(\overline F) = \mu(\overline M),
\end{equation*}
so that $\overline \theta \leq \theta_m$, as announced.

\bigskip

\noindent \underline{Step 3}: Conclusion.

We are now in position to conclude. Let $A$ be a smallest maximal $\theta$-set. As $\Sch(R; \mu,\nu^*)$ is at least approximately scalable, we know that $\mu(A) \geq \nu^*(F_R(A))$. On the other hand, as $A$ is a maximal $\theta$-set, we know that $\mu(A) = \theta_m \nu(F_R(A)) = \overline \theta \nu(F_R(A))$. But by definition of $\overline \theta$, we know that $\overline \theta \nu \geq \nu^*$, so that $\nu^*(F_R(A)) \leq \mu(A)$. We conclude that $\nu^*(F_R(A)) = \mu(A)$, that $\nu^*\llcorner_A = \overline \theta \nu\llcorner_A = \theta_m \nu\llcorner_A$, and hence that
\begin{equation*}
P^*\Big( (\DD \backslash A) \times F_R(A) \Big) = P^*(\DD \times F_R(A)) - P^*(A \times F_R(A)) = \nu^*(F_R(A)) - \mu(A) = 0,
\end{equation*}
so that~\eqref{eq:SISP_isolated} holds.

In addition, by Proposition~\ref{prop:solution_restriction} the measure $P^*\llcorner_{A \times F_R(A)}$ is the solution of the problem $\Sch(R\llcorner_{A \times F_R(A)}; \mu\llcorner_{A}, \nu^*\llcorner_{F_R(A)})$. So in order to prove~\eqref{eq:set_of_scalability}, it suffices to prove that this problem is scalable. For this purpose, we will use Lemma~\ref{lem:non_degenerate_case}. We call $R_r := R\llcorner_{A \times F_R(A)}$. Let $B$ be a nonempty strict subset of $A$. As $A$ is a minimal element in the set of maximal $\theta$-sets for $(R; \mu,\nu)$, we know that $\mu(B) < \theta_m \nu(F_R(B))$. Then, as $F_R(B) \subset F_R(A)$, we have $F_R(B) = F_{R_r}(B)$, so $\mu(B) < \theta_m \nu(F_{R_r}(B))$. Finally, as $\nu^*\llcorner_A = \theta_m \nu\llcorner_A$, we have $\mu(B) < \nu^*(F_{R_r}(B))$. So Lemma~\ref{lem:non_degenerate_case} applies, and $\Sch(R\llcorner_{A \times F_R(A)}; \mu\llcorner_{A}, \nu^*\llcorner_{F_R(A)})$ is scalable, which concludes the proof.
\end{proof}

We close this section with a remark concerning the stability with respect to union of SISP sets.
\begin{rem}
	It is easy to check that SISP sets associated with a triple $(R; \mu,\nu)$ are stable by union. Therefore, there exists an upper bound in the set of all SISP sets for $(R; \mu,\nu)$, that we call the \emph{largest} SISP set. If we want the procedure described in Proposition~\ref{prop:find_S} to be as fast as possible, it is logical to look for SISP sets that are as large as possible, in order to minimize the rank $N$ at which the procedure reaches its stationary point. This is what we are going to do in the next section.
\end{rem}

\section{Numerical applications}
\label{sec_numerical}
A simple consequence of the theoretical procedure described in the previous section is that in a lot of cases, if the problem $\Sch(R; \mu, \nu)$ is non-scalable, the matrices $P^*$ and $Q^*$ from Theorem~\ref{thm:convergence_sinkhorn}, and $R^*$ from Theorem~\ref{thm:link_P*_Q*_R*} have more zero entries than $R$. For instance, if the problem is balanced (\emph{i.e.}\ $\M(\mu) = \M(\nu)$), and if the bipartite graph of $R$ is connected (that is, $\mu(A) = \nu(F_R(A))$ only holds for $A = \emptyset$ or $A = \DD$, which is a reasonable assumption in a lot of contexts), we can check that the matrix $R^1$ from Proposition~\ref{prop:find_S} cannot coincide with $R$.

 Therefore, typically, the Sinkhorn algorithm in the non-scalable case does not converge linearly. We are going to detail an approximate algorithm allowing to find the common support of $P^*$, $Q^*$ and $R^*$, and therefore to recover a linear rate of convergence for the Sinkhorn algorithm by Proposition~\ref{prop:recover_linear_rate}.

\subsection{Stopping criterion}
Before any numerical application, we need to define a stopping criterion for the Sinkhorn algorithm when the Schr\"odinger problem is non-scalable. When the problem is scalable, the classical criterion that is used is the duality gap estimated at each step $n \in \N^*$ of the Sinkhorn algorithm:
\begin{align}
    \label{Classical_stopping_criterion}
{SC}^n = H(P^n | R) - \cg \log(a^{n}), \mu \cd - \cg \log(b^{n-1}), \nu \cd,
\end{align}
where $P^n$, $a^{n}$ and $b^{n-1}$ are defined by the relations~\eqref{eq:computable Sinkhorn}. Indeed, it is known that this quantity is always positive when the relation $P^n_{ij} = a^n_i b^{n-1}_j R_{ij}$ holds for all $i,j$, and the Fenchel-Rockafellar duality ensures that ${SC}_n \to 0$ as $n \to \infty$ when the problem is scalable, \textit{i.e.}\ when $(a^n)$ and $(b^n)$ converge. 

In the approximately scalable case, numerical instabilities may appear when $n \to \infty$ because $(a^n)$ and $(b^n)$ do not converge, but this criterion may remain useful if the error that is tolerated is not too small. However, in the non-scalable case, this criterion does not hold as the problem $\Sch(R;\mu,\nu)$ has no solution. The results presented in the previous sections allow nevertheless to define an approximate criterion. Indeed, it has been shown in~\cite{Chizat2018} that for a given $\lambda >0$, the problem defined with the notations of Subsection~\ref{subsec:notations} by
\begin{equation}
\label{schrodinger_unbalanced}
\textrm{Schu}_\lambda(R;\mu,\nu) = \min \Big\{ H(P | R) + \lambda H(\nu^P |\nu) \, \Big| \, P \mbox{ s.t.\ } \mu^P = \mu \Big\},
\end{equation}
can be solved numerically with a generalization of the Sinkhorn algorithm. More precisely, the duality gap defined for all $n \in \N^*$ by
\begin{equation}
    \label{Approximate_stopping_criterion}
{SCu}_\lambda^n = H(P^n | R) + \lambda\left(H(\nu^{P^n} | \nu) - \Big\cg 1 - (1/b^{n-1})^{\frac{1}{\lambda}} , \nu \Big\cd\right) - \cg \log(a^n), \mu \cd,
\end{equation}
converges to $0$ whenever $P^n_{ij} := a^n_i b^{n-1}_j R_{ij}$ for all $i,j$, and with $a^n, b^n$ defined for all $n \in \mathbb{N}^*$ by the relations:
\begin{equation}
\label{eq:modified_sinkhorn}
\left\{
\begin{gathered}
\forall j, \quad  b^0_j := 1,\\[10pt]
\begin{aligned}
&\forall n\geq 0, \quad \forall i,& a^{n+1}_i &:= \frac{\mu_i}{\displaystyle \sum_j b^n_j R_{ij}},\\
&\forall n\geq 0,\quad \forall j, & b^{n+1}_j &:= \Bigg( \frac{\nu_j}{\displaystyle\sum_i a^{n+1}_i R_{ij}} \Bigg)^{\frac{\lambda}{1 + \lambda}}.
\end{aligned}
\end{gathered}
\right.
\end{equation}

On the other hand, a slight modification of our $\Gamma$-convergence result of Proposition~\ref{prop:gamma_cv} asserts that $P^*$, from Theorem~\ref{thm:convergence_sinkhorn}, is the limit of the solution of~\eqref{schrodinger_unbalanced} as $\lambda \to + \infty$.
So if we now define $SCu_\lambda^n$ by the formula~\eqref{Approximate_stopping_criterion} where $P^n$, $a^n$ and $b^{n-1}$ are computed with the standard Sinkhorn algorithm~\eqref{eq:computable Sinkhorn}, instead of the modified one~\eqref{eq:modified_sinkhorn}, we conclude that for all $\eps>0$, there exists a threshold $\lambda^\eps$ such that for all $\lambda \geq \lambda^\eps$,
\begin{equation*}
    \limsup_{n \to + \infty} SCu_\lambda^n \leq \eps.
\end{equation*}

Therefore, the stopping criterion~\eqref{Approximate_stopping_criterion} can still be used for the sequence $(P^n)_{n \in \mathbb N}$ generated by the classical Sinkhorn algorithm~\eqref{eq:computable Sinkhorn}, as long as $\lambda^\eps$ is chosen to be sufficiently large w.r.t.\ $\eps$. In practice, we observe that taking $\lambda^\eps = \frac{1}{\eps}$ works well. This is what we are going to do in the following section, considering a level of error $\eps := 10^{-3}$.

\subsection{An approximate numerical method for constructing the support of $R^*$}

An interesting application of the theoretical procedures described in Subsection~\ref{subsec:procedure} is the construction of an approximate algorithm allowing the identification of the support $\mathcal S$ of $R^*$ w.r.t. $R$, when the problem $\Sch(R;\mu,\nu)$ is approximately scalable or non-scalable. Our motivation is double. First, the fact that knowing the support allows to recover a linear rate for the Sinkhorn algorithm, thanks to Proposition \ref{prop:recover_linear_rate}, suggests that if this algorithm is simple enough, the full procedure to obtain $R^*$ (preprocessing to know the $\mathcal S$ and then the Sinkhorn algorithm applied to $\Sch(\1_{\mathcal S}R;\mu,\nu)$ is excepted to be faster than the Sinkhorn algorithm applied directly to $\Sch(R;\mu,\nu)$. Second, even if we have shown that the Sinkhorn algorithm converges in the non-scalable case, the Schrödinger potentials appearing in the Sinkhorn procedure are likely to be too high to be computed numerically before the algorithm has converged: thus, finding the support before to run the Sinkhorn algorithm can be an advantage, as long as this preprocessing prevents this potential's explosion and even if it does not accelerate the procedure.\\

We recall that we identified, at each iteration $n$ of the theoretical procedure described in Proposition~\ref{prop:find_S}, a SISP set, denoted $M_n$, for a reduced problem $\Sch(R\llcorner_{\DD^{n} \times \FF^{n}}; \mu\llcorner_{\DD^n},\nu\llcorner_{\FF^n})$. Regarding the proof of Lemma \ref{lem:SISP_sets_exist}, a natural choice for $M_n$ is an union of smallest maximal $\theta$-sets for $\Sch(R\llcorner_{\DD^n \times \FF^n}; \mu\llcorner_{\DD^n},\nu\llcorner_{\FF^n})$, that we introduced in Definition \ref{def:maximal_theta_set}. The approximate procedure that we design for finding $M_n$ at each iteration consists in two steps:
\begin{enumerate}
    \item Find the largest set $M_n'$ (in the sens of inclusion) such that the supports of $R$ and $R^*$ coincide in $M_n' \times \FF$;
    \item Find $M_n \subset M_n'$, an union of smallest maximal $\theta$-set for $(R\llcorner_{M_n'\times F_R(M_n')}; \mu\llcorner_{M_n'}, \nu\llcorner_{F_R(M_n')})$.
\end{enumerate}

Indeed, finding $M_n'$ in the first step can be achieved with a slightly modified Sinkhorn procedure applied to $\Sch(R\llcorner_{\DD^n\times \FF^n}, \mu\llcorner_{\DD^n}, \nu\llcorner_{\FF^n})$. More precisely, we initialize $U = \DD^n$ and $V = \FF^n$, and at each step of this Sinkhorn procedure, for all $x_i \in \DD^n$, if there exists $y_j \in V$ such that the coupling obtained at this step is smaller than a given threshold, we set $U = U \backslash x_i$. Since Lemma \ref{lem:SISP_sets_exist} ensures that there exists at least one SISP set, $M_n'$ is not empty and thus we do not set all the lines to $0$ (given that the threshold is sufficiently small). For all $y_j \in V$, if $y_j \notin F_{R\llcorner_{\DD^{n} \times \FF^{n}}}(U)$, we set $V = V \backslash y_j$.

After several steps of this modified Sinkhorn procedure, the elements remaining in $U$ and $V$ coincide respectively with $M_n'$ and $F_{R\llcorner_{\DD^{n} \times \FF^{n}}}(M_n')$. Since the Sinkhorn algorithm restricted to $U \times V$ converges linearly (thanks to Proposition \ref{prop:recover_linear_rate}), we then rapidly observe the convergence of the procedure.

 \bigskip
 
 For detailing how to obtain $M_n$ at the second step, we need to introduce the notion of connected components:
 \begin{defn}
 Let $U \subset \DD$ and $V \subset \FF$. We say that $A \times B$ is a connected component of the graph $(U \cup V, \triangle)$ (using the notation of \ref{push forward subset}) if:
 \begin{itemize}
     \item $R\llcorner_{U \times V}(A \times B^c) = R\llcorner_{U \times V}(A^c \times B) = 0$;
     \item $(A \cup B,\triangle)$ is connected.
 \end{itemize}
 \end{defn}
 
Finding connected components for such undirected graph is a classical task in Graph theory, for which there exists ready-to-use algorithms \cite{hagberg2008exploring}.
The following proposition justifies that we can define at the second step of the procedure
\begin{align}
\label{rel:choice_second_step}
    M_n = \bigcup \left\{U_i \,\bigg|\, \frac{\mu(U_i)}{\nu(V_i)} = \max\limits_{j=1,\cdots,C}\frac{\mu(U_j)}{\nu(V_j)}\right\},
\end{align} 
where $U_1 \times V_1,\cdots,U_C \times V_C$ are the connected components of the graph $(M_n' \cup F_{R\llcorner_{\DD^{n} \times \FF^{n}}}(M_n'), \triangle)$. 

\begin{prop}
\label{prop:connected_components}
Let $R \in \mathcal M_+(\DD \times \FF)$, $\mu \in \mathcal M_+(\DD)$ and $\nu\in \mathcal M_+(\FF)$ satisfying Assumption~\ref{ass:full_support}, and let us denote $U$ the largest subset of $\DD$ (in the sens of inclusion) such that the supports of $R$ and $R^*$ coincide in $U \times \FF$, and $V = F_R(U)$. Let us denote $(U_1 \times V_1,\cdots,U_C \times V_C)$ the $C$ connected components of the graph $(U \cup V, \triangle)$. Then, for any $U_i \times V_i$ such that
\begin{align}
\label{rel:max_cc}
    \frac{\mu(U_i)}{\nu(V_i)} =  \max\limits_{j=1,\cdots,C}\frac{\mu(U_j)}{\nu(V_j)},
\end{align}
$U_i$ is a smallest maximal $\theta$-sets for $(R;\mu,\nu)$.
\end{prop}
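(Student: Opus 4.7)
The plan is to read off the ratio $\mu(U_i)/\nu(V_i)$ from the dual potentials of Proposition~\ref{prop:optimality_conditions_mu*_nu*}, show that the maximum over $i$ equals $\theta_m$ and is attained at a component enjoying a built-in SISP-style isolation, and then invoke Proposition~\ref{prop:solution_restriction} together with the scalable half of Lemma~\ref{lem:non_degenerate_case} inside that component to rule out proper subsets from being maximal $\theta$-sets.

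Fix a connected component $U_i\cup V_i$. Because the supports of $R$ and $R^*$ coincide on $U\times\FF$, the set $\mathcal S\cap(U_i\times V_i)$ is exactly the edge set of the connected bipartite graph $(U_i\cup V_i,\triangle)$. Since $\varphi_\alpha+\psi_\beta=0$ on $\mathcal S$ by~\eqref{eq:optimality_conditions_mu*_nu*}, connectedness forces $\varphi\equiv c^{(i)}$ on $U_i$ and $\psi\equiv -c^{(i)}$ on $V_i$ for a single $c^{(i)}\in\R$, whence $\nu^*_\beta=e^{-c^{(i)}}\nu_\beta$ on $V_i$. The identity $\theta_m=\max_\beta\nu^*_\beta/\nu_\beta$ established inside the proof of Lemma~\ref{lem:SISP_sets_exist} then gives $e^{-c^{(i)}}\le\theta_m$, with equality iff $V_i$ is contained in the set $\overline F=\{\beta:\nu^*_\beta/\nu_\beta=\theta_m\}$. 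Next, for $x_\alpha\in U_i$ the row-support of $R^*$ sits in $F_R(\{x_\alpha\})\subset V_i$, so $P^*(U_i\times V_i)=\mu(U_i)$, and the disjointness of the other components yields $P^*((U\setminus U_i)\times V_i)=0$. Combining with $P^*(\DD\times V_i)=\nu^*(V_i)$ one obtains
\begin{equation*}
\frac{\mu(U_i)}{\nu(V_i)}\le e^{-c^{(i)}}\le \theta_m,
\end{equation*}
with the first inequality tight iff $P^*((\DD\setminus U)\times V_i)=0$. To see that the maximum is actually $\theta_m$, pick a smallest maximal $\theta$-set $A_0$: the proof of Lemma~\ref{lem:SISP_sets_exist} shows it is a SISP set, and the SISP isolation together with the connectedness of $A_0\cup F_R(A_0)$ (inherited from the scalability of the restricted problem there) force $A_0\cup F_R(A_0)$ to be a full connected component of $(U\cup V,\triangle)$, so $A_0=U_{k_0}$, $F_R(A_0)=V_{k_0}$ and $\mu(U_{k_0})/\nu(V_{k_0})=\theta_m$.

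Now let $U_i$ attain the maximum~\eqref{rel:max_cc}. The two inequalities above are equalities, so $V_i\subset\overline F$, the isolation $P^*((\DD\setminus U_i)\times F_R(U_i))=0$ holds, and $\mu(U_i)=\theta_m\,\nu(F_R(U_i))$; in particular $U_i$ is a maximal $\theta$-set. The isolation condition~\eqref{eq:SISP_isolated} lets Proposition~\ref{prop:solution_restriction} identify $P^*\llcorner_{U_i\times V_i}$ with the $P^*$ of the sub-problem $\Sch(R\llcorner_{U_i\times V_i};\mu\llcorner_{U_i},\nu\llcorner_{V_i})$; because supports of $R$ and $R^*$ coincide on $U\times\FF$, this sub-problem is unbalanced scalable in the sense of Subsection~\ref{subsec:ass}. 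Applying the normalized version of Lemma~\ref{lem:non_degenerate_case}, a nonempty $B\subsetneq U_i$ satisfies $\mu(B)<\theta_m\,\nu(F_R(B))$ unless $\mu^R(B)=\nu^R(F_R(B))$ inside $U_i\times V_i$; but the latter equality means no $R$-edge joins $U_i\setminus B$ to $F_R(B)$, which combined with the absence of $R$-edges from $B$ to $V_i\setminus F_R(B)$ disconnects the connected graph $U_i\cup V_i$, a contradiction. Hence no such $B$ is maximal $\theta$, and $U_i$ is a smallest maximal $\theta$-set.

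The hardest step is certifying that the maximum equals $\theta_m$ and that every attaining $U_i$ automatically inherits both $V_i\subset\overline F$ and the absence of $P^*$-flow from $\DD\setminus U$ into $V_i$; going through a smallest maximal $\theta$-set (already a SISP set, already a connected component of $(U\cup V,\triangle)$) is what makes the equality visible, and the joint tightness characterization is what promotes the attaining $U_i$ itself to a SISP set on which Proposition~\ref{prop:solution_restriction} can be applied.
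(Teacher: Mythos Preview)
Your argument is correct, though it takes a different route from the paper's. The paper proves a single structural claim---every smallest maximal $\theta$-set $A$ gives a connected component $A\cup F_R(A)$ of $(U\cup V,\triangle)$---and then concludes combinatorially: since connected components cannot properly nest, any proper subset of a ratio-maximizing $U_i$ would contain a smallest maximal $\theta$-set and hence equal $U_i$, a contradiction. You instead compute the component ratios via the potentials, prove the bound $\mu(U_i)/\nu(V_i)\le e^{-c^{(i)}}\le\theta_m$ with explicit equality conditions, and then argue minimality by restricting to $U_i\times V_i$, invoking Proposition~\ref{prop:solution_restriction} and the scalable direction of Lemma~\ref{lem:non_degenerate_case}. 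Your approach is longer but has the merit of making the isolation and the value of the ratio visible from the dual side; the paper's combinatorial shortcut is slicker but leaves the analytic mechanism implicit.

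One phrasing to tighten: you write that the connectedness of $A_0\cup F_R(A_0)$ is ``inherited from the scalability of the restricted problem there''. Scalability alone does not force connectedness (block-diagonal scalable problems exist). What actually gives it is the \emph{minimality} of $A_0$: for every nonempty $B\subsetneq A_0$ one has $\mu(B)<\theta_m\,\nu(F_R(B))$, so if $A_0$ split into components $B_1,\dots,B_m$ with $m\ge2$, summing these strict inequalities would yield $\mu(A_0)<\theta_m\,\nu(F_R(A_0))$, contradicting $\mu(A_0)=\theta_m\,\nu(F_R(A_0))$. With that adjustment the argument is complete.
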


As we have already shown that for all $n \leq 0$, Assumption~\ref{ass:full_support} holds for $(R\llcorner_{\DD^{n} \times \FF^{n}},\mu\llcorner_{\DD^n}, \nu\llcorner_{\FF^n})$, we can apply Proposition \ref{prop:connected_components} to this triple.

\begin{proof}
The main point consists in showing that for any smallest maximal $\theta$-set for $(R; \mu, \nu)$, that we denote $A$, $A \times F_R(A)$ is a connected component of $(U \cup V, \triangle)$. Indeed, assuming this claim, as there always exists such smallest maximal $\theta$-set, any connected component $U_i \times V_i$ maximizing \eqref{rel:max_cc}, $U_i$ is a maximal $\theta$-set: $U_i$ is then necessarily a smallest one because it cannot contain any other connected component, and then any smallest maximal $\theta$-set neither. We now prove the claim.

As $A$ is a SISP set (thanks to the proof of Proposition \ref{lem:SISP_sets_exist}), it is in $U$ and we have necessarily $R^*(A^c \times F_R(A)) = R^*(A \times F_R(A)^c) = 0$. The supports of $R$ and $R^*$ being the same in $U \times V$ by construction of $U$ and $V$, we have thus $R\llcorner_{U \times V}(A^c \times F_R(A)) = R\llcorner_{U \times V}(A \times F_R(A)^c) = 0$.
Moreover, $A \times F_R(A)$ cannot contain any connected component of $(U \cup V, \triangle)$: if it was the case, this connected component would characterize a maximal $\theta$-set for $(R; \mu, \nu)$, which would contradict the minimality of $A$. Thus, $A \times F_R(A)$ is necessarily a connected component of the graph $(U \cup V, \triangle)$.
\end{proof}

We provide in Algorithm~\ref{algo:preprocessing} the pseudo-code of this iterative method.

\begin{algorithm}
	\caption{Find the support $\textrm{Supp}$ of $R^*$: return $\textrm{Supp}$}
	\begin{algorithmic}
		\REQUIRE
		$\bullet$ A set of minimal factors: $\{m_i, \, i=1,\cdots,N\}$,\\		
		$\bullet$ A stopping criterion: $stop(a, b ,R, \mu, \nu)$.\\
		We set $A = \DD$, $B = \FF$, $\textrm{Supp} = \textrm{Support}(R)$.
		\WHILE {$A \neq \emptyset$}
		\STATE{$\bar R = R\llcorner_{A \times B}$, $\bar \mu = \mu\llcorner_A$, $\bar \nu = \nu \llcorner_B$}
		\STATE $b = \1_{B}$, $a = \1_A$
		\STATE $U = A$, $V = B$
		\WHILE {$stop(a, b ,\bar R, \bar \mu, \bar \nu) \neq 1$}
		\FOR{$x_i \in U$}
		\IF{$\sum\limits_{y_j \in V} \bar R_{ij}b_j = 0$}
		\STATE{$U = U \backslash \{x_i\}$}
		\ELSE
		\STATE{$a_i = \displaystyle{\frac{\bar \mu_i}{\sum\limits_{y_j \in V} \bar R_{ij}b_j}}$}
		\IF{$\min\limits_{y_j \in V} a_i \times b_j < m_i$}
		\STATE{$U = U \backslash \{x_i\}$}
		\ENDIF
		\ENDIF
		\ENDFOR
		
		\STATE{$\bar R= \bar R\llcorner_{U \times V}$, $\bar \mu = \bar \mu\llcorner_{U}$, $a = a \llcorner_{U}$}
		
		\FOR{$y_j \in V$}
		\IF{$ \sum\limits_{x_i \in A} \bar R_{ij}a_i = 0$}
		\STATE{$V = V \backslash \{y_j\}$}
		\ELSE
		\STATE{$b_j =\displaystyle{ \frac{\bar \nu_j}{\sum\limits_{x_i \in A} \bar R_{ij}a_i}}$}
		\ENDIF
		\ENDFOR
		
		\STATE{$\bar R= \bar R\llcorner_{U \times V}$, $\bar \nu = \bar \nu\llcorner_{V}$, $b= b \llcorner_{V}$}

		\ENDWHILE
        \STATE{$(U_1 \times V_1,\cdots, U_C \times V_C) = \textrm{connected components of the graph}\, (U \cup V, \triangle)$}
        \STATE{$U \times V = \bigcup \big\{U_i \times V_i,\,|\, \frac{\mu(U_i)}{\nu(V_i)} = \max\limits_{j=1,\cdots,C}\frac{\mu(U_j)}{\nu(V_j)}\big\}$}
	
		\STATE $A = A \backslash U$, $B = B \backslash V$
		\STATE $\textrm{Supp} = \textrm{Supp}\backslash \big((A \backslash U) \times V \big)$
		\ENDWHILE
	\end{algorithmic}
	\label{algo:preprocessing}
\end{algorithm}

\bigskip

Let us make a few comment on this algorithm. The support of $R^*$ only depends on the support of $R$ and not of its values, so when identifying $\mathcal S$, we can equivalently consider the problem $\Sch(R';\mu,\nu)$, where $R' = \1_{R \neq 0}$. This explains why we consider the minimum $\min\limits_{y_j \in V} a_i \times b_j$ rather than $\min\limits_{y_j \in V} a_i \times b_j \times R_{ij}$ in Algorithm~\ref{algo:preprocessing}.

 The stopping criterion corresponds to the criterion~\eqref{Approximate_stopping_criterion} detailed in the previous section, which has to be smaller than a certain threshold $\varepsilon$ to be satisfied. 
 
 Choosing in an appropriate way the set of minimal factors $\{m_i, \, i=1,\cdots,N\}$, is crucial: it determines the level of approximation that is considered as acceptable, \textit{i.e.}\ the minimal value at which we can consider that the algorithm should create a new zero entries. In practice, we observe that
 \begin{align}
 m_i :=\frac{1}{n}\frac{\mu_i}{\mu_i^R},
 \label{thresold}
 \end{align}
 seems to be a good tradeoff between efficiency and security in most of the cases that we explored.\\

\begin{rem}
\begin{itemize}
 \item This method can be seen as an improvement over the naive approximate method which consists, at each iteration of the Sinkhorn algorithm applied to $\Sch(R;\mu,\nu)$, to set all the entries of $R$ that are smaller to a certain threshold to zero. With our method, we do not have to identify all the zero entries one by one but line by line, which can avoid numerical errors which may appear for some entries converging slowly to zero. However, this is done at the cost of identifying the connected components of some subgraphs of $(\DD \cup \FF,\triangle)$ at every iteration of the procedure, which slows down the algorithm in cases where these subgraphs are large. Note that in typical cases where the matrix $R$ is structured, we do not expect to find more than one connected component at each iteration as in \cref{fig:procedure}.
\item We emphasize the fact that this algorithm is only approximate, and this for two reasons. The first one occurs when the set of thresholds $\{m_i, \, i = 1, \dots, N\}$ are too large. Then, we can set to zero lines which should not be, just because some of the entries of $R^*$ should be small on this line. This must be avoided as then the algorithm cannot converge towards~$R^*$. 
    
    The other case where our algorithm does not identify $\mathcal S$ exactly is either when the threshold $\eps$ of the stopping criterion is large, or when the thresholds $\{m_i, \, i = 1, \dots, N\}$ are small. Then, the Sinkhorn algorithm can satisfy the stopping criterion before all the zeroes have been identified. This is not a big problem, since it means that the algorithm converges well without having to identify the additional zeroes of~$R^*$.
    
    With these observations, we conclude that the thresholds $\{m_i, \, i = 1, \dots, N\}$ need to be taken rather small w.r.t.\ the threshold $\eps$ of the stopping criterion, even though of course, if they are taken too small, efficiency is lost since then the algorithm just behaves as Sinkhorn without any improvement.
\end{itemize}
\end{rem} 

\bigskip
 
 We illustrate in~\cref{fig:preprocessing} the efficiency of this procedure. As emphasized at the beginning of this section, the number of iterations of a Sinkhorn-like algorithm is a better indicator than the computation time of the full procedure because our main motivation to find the support of $R^*$ before running the Sinkhorn algorithm is that we want the Schrödinger potentials not to explode before reaching convergence, which could be the case for non-scalable problems. 
 We thus represent the number of iterations needed for the Sinkhorn algorithm to converge as a function of the number of additional zero entries in $R^*$ w.r.t.\ to $R$, and compare when we apply or not the preprocessing described in Algorithm~\ref{algo:preprocessing}. For varying the number of zero entries, we take $R$ upper-diagonal, build $\mu$ and $\nu$ similarly to what we described in~\cref{fig:procedure}, and then vary the number of blocks from $1$ (corresponding to the scalable case) to $10$. For the case with preprocessing, we consider the sum of the iterations needed for the Sinkhorn-like method described in Algorithm \ref{algo:preprocessing} to find the support $\mathcal S$, and of the ones needed for the Sinkhorn algorithm then applied to the problem $\Sch(\1_{\mathcal S}R;\mu,\nu)$. We observe that the preprocessing makes the number of iterations needed for the convergence to be significantly smaller than for the case without preprocessing when the number of additional zero entries is high. It is also smaller than for the case when the naive approximate method is applied, illustrating the benefit of our approach.

\begin{figure}
    \centering
	    \includegraphics[width=.9\textwidth]{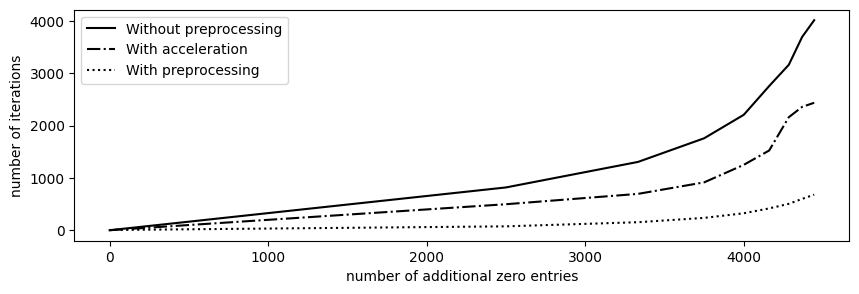}
    \caption{Number of iterations needed for convergence \emph{vs.}\ number of additional zero entries of the limits $P^*$ and $Q^*$ from Theorem~\ref{thm:convergence_sinkhorn} w.r.t.\ $R$, using: the Sinkhorn algorithm~\eqref{eq:computable Sinkhorn} (solid line); the Sinkhorn algorithm where at each step, the entries below the threshold~\eqref{thresold} are set to zero (dashed line); Algorithm~\ref{algo:preprocessing} to compute the support $\mathcal S$ of the limits, and then the Sinkhorn algorithm replacing $R$ by $\1_{\mathcal S} R$ (dotted line). When the threshold $\eps$ is small enough, as it is the case here, these three methods provide the same limits.}
    \label{fig:preprocessing}
\end{figure}

\subsection{Comparison of the method with the balanced and unbalanced Sinkhorn algorithms}
\label{numerical_results}

We compared in~\cref{fig:comparisons} the outputs of the Sinkhorn algorithm when the problem $\Sch(R;\mu,\nu)$ is non-scalable, given by the geometric mean described in Theorem~\ref{thm:link_P*_Q*_R*}, and two alternatives:
\begin{itemize}
    \item When the reference coupling $R$ is modified such that it has only positive entries. For that, we built a new coupling $R_\eps$ by adding on every zero entry of $R$ a small quantity $\eps$. We then found the optimizers $R^*_\eps$ of the Schr\"odinger problems $\Sch(R_\eps;\mu,\nu)$ and compared its distance in total variation of its solution to $R^*$, for different values of $\eps$.
    \item When we the marginal constraints are replaced by marginal penalizations, leading to an unbalanced problem of the form \eqref{eq:unbalanced_problem_lambda},
    using the scaling algorithm described in~\cite{Chizat2018}. We then compared the distance in total variation of its solution to $R^*$, for different values of~$\lambda$.
\end{itemize}
For the comparison realized here, we took a coupling $R$ of size $100 \times 100$ and $R, \mu, \nu$ as in~\cref{fig:procedure} in such a way that $R^*$ has only two blocks $A_1 \times B_1$ and $A_2 \times B_2$ for which the factor $\lambda$ appearing in the procedure of Proposition~\ref{prop:find_S} is greater than one for the first component, and smaller for the second one. The problem is thus non-scalable. As expected, we observe in \cref{comp_balanced} that in the first case it is impossible for the solution $R^*_\varepsilon$ of $\Sch(R_\eps;\mu,\nu)$ to be close to $R^*$ (and then to recover the right minimum entropy), and that the faster is the convergence, the further $R^*_\varepsilon$ is from $R^*$. In the second case, we observe in \cref{comp_unbalanced} that the solution $R^*_\lambda$ of the unbalanced problem with penalization $\lambda$ converges to $R^*$ when $\lambda \to \infty$. However, the convergence goes faster only for values of $\lambda$ smaller than $150$, for which we still observe a significant difference between $R^*$ and $R^*_\lambda$.

Note that these results are not limited to Schrödinger problems similar to the one described in~\cref{fig:procedure}, and that we observed the same type of results for randomly-generated $R$, $\mu$ and $\nu$ in non-scalable cases.

\begin{figure}
\begin{subfigure}{\textwidth}
    \centering
	    \includegraphics[width=.9\textwidth]{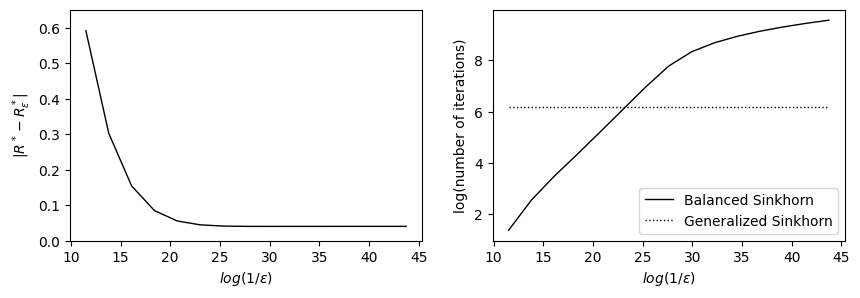}
	  \caption{}
	  \label{comp_balanced}
\end{subfigure}
\begin{subfigure}{\textwidth}
    \centering
	    \includegraphics[width=.9\textwidth]{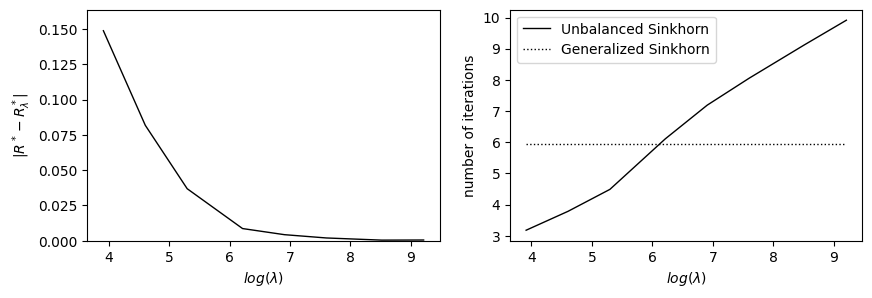}
	    \caption{}
	    \label{comp_unbalanced}
\end{subfigure}
    \caption{Comparison of the outputs of the Sinkhorn algorithm in a non-scalable case, where $R^*$ is given by Theorem~\ref{thm:link_P*_Q*_R*}, and:~\ref{comp_balanced} the Sinkhorn algorithm~\eqref{eq:computable Sinkhorn} where the zero entries of $R$ are replaced by a small value $\eps$;~\ref{comp_unbalanced} the unbalanced Sinkhorn algorithm from~\cite{Chizat2018} applied to solve $\textrm{Schu}_\lambda(R;\mu,\nu)$ for large values of $\lambda$.}
    \label{fig:comparisons}
\end{figure}

\section*{Acknowledgments}
This work was supported by funding from French agency ANR (SingleStatOmics; ANR-18-CE45-0023-03). We would like to thank Olivier Gandrillon, Thibault Espinasse and Thomas Lepoutre for enlightening discussions, and the latter and Gauthier Clerc for critical reading of the manuscript. We finally thank the BioSyL Federation, the LabEx Ecofect (ANR-11-LABX-0048) and the LabEx Milyon of the University of Lyon for inspiring scientific events.

\begin{appendices}

\section{Example of Schr\"odinger problems without solutions}
\label{app:no_solution}

There exists a lot of degenerate cases where the problem $\Sch(R;\mu,\nu)$ has no solution. Indeed, in the extreme situation where most of the entries of $R$ cancel, two randomly chosen vectors $\mu$ and $\nu$ have more chance to be non-scalable than to satisfy the conditions of Theorem~\ref{thm:CNS}. For example, in the typical example of a squared diagonal reference coupling $R$, we must necessarily have $\mu = \nu$ for these conditions to be satisfied.

When illustrating our results at Sections~\ref{sec:existence_of_a_solution} and~\ref{sec_numerical}, we chose $R$ to be a squared upper-diagonal matrix (see~\cref{fig:procedure}). This is of particular interest, as it corresponds to a case that typically arises when considering entropy minimization problems in cell biology. Indeed, the dynamics of mRNA levels within a cell, which drives cellular differentiation processes, is often modeled by a piecewise deterministic Markov process, where stochastic bursts of mRNAs compensate their deterministic degradation~\cite{Ventre2021, ventre2022one}. Considering the simplest cartoonish but enlightening situation where there is no degradation, a constant number of cells, and where we measure the activity of only one gene, the quantity of mRNAs in the cells corresponding to this gene can only increase with time. Therefore, if $R$ is the matrix whose entry $R_{ij}$ gives the number of cells having $i$ molecules of mRNA at a first timepoint and $j$ molecules at a later timepoint, $R$ must be upper-diagonal.

To give an insight of the behaviour of the Sinkhorn algorithm in the non-scalable case with an upper-diagonal reference matrix, let us treat explicitly a simple example. We consider:
\begin{equation*}
R =\begin{pmatrix}
1 & 1 & 1\\
0 & 1 & 1\\
0 & 0 & 1
\end{pmatrix}, \quad  \mu = (2, 2, 2),\quad \nu = (2, 3, 1).
\end{equation*}
In this example, $\mu_3 > \nu_3$ while the image of $x_3$ by the graph associated to $R$ is reduced to $y_3$, that is, with the formalism of Section~\ref{sec:existence_of_a_solution}, $F_R(\{x_3\}) = \{y_3\}$ and hence $\nu(F_R(\{x_3\}) < \mu(\{x_3\})$. In view of Theorem~\ref{thm:CNS} (which is very easy to check in our simple situation), the problem is therefore indeed non-scalable: no matrix can satisfy the marginal constraints and be absolutely continuous w.r.t $R$ at the same time. 

With the notations of~\eqref{eq:computable Sinkhorn}, let us reproduce below the output of the Sinkhorn algorithm at some of the first iterations. Starting at Iteration 5, we only give approximate numerical values.

\noindent \underline{Iteration 1}:
\begin{equation*}
a^1 = (2/3, 1, 2),\quad b^0 = (1, 1, 1),\quad P^1 =\begin{pmatrix}
2/3 & 2/3 & 2/3\\
0 & 1 & 1\\
0 & 0 & 2
\end{pmatrix}.
\end{equation*}
\noindent \underline{Iteration 2}:
\begin{equation*}
 a^1 = (2/3, 1, 2),\quad b^1 = (3, 9/5, 3/11), \quad Q^1 =\begin{pmatrix}
2 & 6/5 & 2/11\\
0 & 9/5 & 3/11\\
0 & 0 & 6/11
\end{pmatrix}.
\end{equation*}

\noindent \underline{Iteration 5}:
\begin{equation*}
 a^3 = (2.7e^{-1},8.6e^{-1}, 1.7e^1),\quad b^2 = (5.0, 2.2, 1.2e^{-1}), \quad P^3 =\begin{pmatrix}
1.4 & 0.59 & 3.2e^{-2}\\
0 & 1.9 & 1.0e^{-1}\\
0 & 0 & 2.0
\end{pmatrix}.
\end{equation*}

\noindent \underline{Iteration 11}:
\begin{equation*}
 a^6 = (1.2e^{-1}, 5.1e^{-1}, 1.5e^2),\quad b^5 = (1.3e^1, 3.9, 1.3e^{-2}), \quad P^6 =\begin{pmatrix}
1.55 & 0.45& 1.5e^{-3}\\
0 & 2.0 & 6.6e^{-3}\\
0 & 0 & 2.0
\end{pmatrix}.
\end{equation*}

\noindent \underline{Iteration 80}:
\begin{equation*}
 a^{40} =   (5.5e^{-5}, 2.8e^{-4}, 2.7e^{12}),\quad b^{40} = (3.6e^4, 9.1e^3, 3.8e^{-13}),\quad Q^{40} =\begin{pmatrix}
2.0 & 0.50 & 1.7 e^{-17} \\
0 & 2.5 & 8.4 e^{-16} \\
0 & 0 & 1.0
\end{pmatrix}.
\end{equation*}

\noindent \underline{Iteration 81}:
\begin{equation*}
 a^{41} = (4.4e^{-5}, 2.2e^{-4}, 5.3e^{12}), \quad b^{40} = (3.6e^4, 9.1e^3, 3.8e^{-13}),\quad P^{40}= 
 \begin{pmatrix}
1.6 & 0.40 & 4.2e^{-17}\\
0 & 2.0 & 2.1e^{-16}\\
0 & 0 & 2.0
\end{pmatrix}.
\end{equation*}

Of course, in this case, the matrices $P^*$, $Q^*$, $R^*$ from Theorem~\ref{thm:convergence_sinkhorn},~\ref{thm:link_P*_Q*_R*} are given by
\begin{equation*}
 P^* = \begin{pmatrix}
 8/5&2/5&0 \\ 0&2&0 \\ 0&0&2
 \end{pmatrix},\quad
  Q^* =  \begin{pmatrix}
 2&1/2&0 \\ 0&5/2&0 \\ 0&0&1
 \end{pmatrix},\quad
 R^* = \begin{pmatrix}
 4 / \sqrt 5  &1/\sqrt 5&0 \\ 0 &\sqrt 5&0 \\ 0&0&\sqrt 2
 \end{pmatrix}.
\end{equation*}
Finally, to get $\bar R{}^*$ from Theorem~\ref{thm:balanced}, it suffices to normalize $R^*$. 

\bigskip

This very simple example illustrates the different points developed in this article:
\begin{itemize}
\item When $R$ does not have only positive entries, the limits of the sequences $(P^n)$ and $(Q^n)$ given by the Sinkhorn algorithm may be different and have more zero entries than $R$;
\item Because new zero entries appear, the potentials $(a^n)$ and $(b^n)$ that are updated at each iteration of the Sinkhorn algorithm cannot converge: some of their coordinates have to tend to $0$ and then some other ones need to diverge to $+\infty$ as the number of iterations increases;
\item More precisely, for $(i,j)$ on the common support of $P^*$ and $Q^*$ from Theorem~\ref{thm:convergence_sinkhorn}, the infinitely small and high values of the two potentials are compensated. For $(i,j)$ outside of this common support, but still in the one of $R$, the multiplication of the two potentials generate infinitely small values. Outside of the support of $R$, the multiplication of the potentials can diverge. Also, the zero entries of $R$ prevent the sums involved in the computations of $(a^n)$ and $(b^n)$ to diverge: the large values of the potentials are sent to zero in the multiplication with $R$;
\item When the problem is non-scalable, the algorithm still converges to two limits and the algorithm alternates between them. These two limits correspond to solutions of the Schr\"odinger problem with modified marginals, that is with modified $\mu$ or modified $\nu$ alternatively (see the iterations 80 and 81).
\end{itemize}

Going back to the context of the beginning of the section, where the upper-diagonal $R$ models the evolution of the quantity of mRNAs corresponding to one gene in a population of cells between two timepoints, we see that the non-scalable case appears when there exists a threshold such that more cells with less mRNAs than the threshold are measured at the second timepoint than at the first one, which is incompatible with the model where the quantity of mRNAs can only increase. If we believe enough in our model, it is natural to look for a solution with modified marginals -- like for instance the law $\bar R^*$ described in Theorem~\ref{thm:balanced} -- and to advocate for a bad sampling or imprecise measurements when collecting data. 

If we consider that such incompatibilities between the theoretical model $R$ and the observations $\mu$ and $\nu$ should be rare, this law $\bar R^*$ is a natural choice since among all the solutions of a Schr\"odinger problem w.r.t.\ $R$, it is the one whose marginals are the closest (in a specific entropic sense) to the experimental ones.

\section{Proof of Theorem~\ref{thm:CNS}}
\label{proof of theorem CNS}
In this section, we prove Lemma~\ref{lem:non_degenerate_case} and then Theorem~\ref{thm:CNS}. The following classical proposition will be used in the proof of Lemma~\ref{lem:non_degenerate_case}:
\begin{prop}
\label{prop:R*_has_biggest_support}
Let $R \in \mathcal M_+(\DD \times \FF)$, $\mu \in \mathcal M_+(\DD)$ and $\nu\in \mathcal M_+(\FF)$ be such that the problem $\Sch(R;\mu,\nu)$ admits a solution $R^*$. Then for all $P \in \Pi(\mu, \nu)$ such that $P \ll R$, we necessarily have $P \ll R^*$.
\end{prop}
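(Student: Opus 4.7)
The plan is to argue by contradiction using the convexity of $\Pi(\mu,\nu)$ together with a one-sided derivative computation for the relative entropy along a line segment. Suppose $P \in \Pi(\mu,\nu)$ with $P \ll R$ but $P \not\ll R^*$, so that the set $\mathcal{N} := \{(i,j) : P_{ij}>0,\ R^*_{ij}=0\}$ is nonempty. For $t \in [0,1]$ consider the convex combination $R_t := (1-t)R^* + tP$. Since $\Pi(\mu,\nu)$ is convex, $R_t \in \Pi(\mu,\nu)$; and since both $R^* \ll R$ (as $H(R^*|R)<+\infty$) and $P \ll R$, we have $R_t \ll R$, so $H(R_t|R)$ is finite for every $t \in [0,1]$.

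Next I would compute $H(R_t|R)$ explicitly and study its right derivative at $t=0$. Split the sum defining $H(R_t|R)$ according to whether $R^*_{ij}>0$ or $R^*_{ij}=0$. On the indices $(i,j)$ with $R^*_{ij}>0$, the function $t \mapsto (R_t)_{ij}\log\frac{(R_t)_{ij}}{R_{ij}} + R_{ij} - (R_t)_{ij}$ is smooth near $0$, with derivative at $t=0$ equal to $(P_{ij}-R^*_{ij})\log\frac{R^*_{ij}}{R_{ij}}$, a finite quantity. On $\mathcal{N}$, where $(R_t)_{ij} = tP_{ij}$, the contribution is $tP_{ij}\log\frac{tP_{ij}}{R_{ij}} + R_{ij} - tP_{ij}$, whose derivative with respect to $t$ is $P_{ij}\log\frac{tP_{ij}}{R_{ij}}$, and this tends to $-\infty$ as $t\to 0^+$. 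Finally, on indices with $R^*_{ij}=0=P_{ij}$, the contribution is identically $R_{ij}$ and does not contribute to the derivative.

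Summing over all indices, the right derivative of $t \mapsto H(R_t|R)$ at $t=0$ is $-\infty$: the finite piece coming from $\{R^*_{ij}>0\}$ is overwhelmed by the $-\infty$ contribution from any nonempty $\mathcal{N}$. Consequently, for $t>0$ small enough, $H(R_t|R) < H(R^*|R)$, with $R_t$ still a competitor for $\Sch(R;\mu,\nu)$. This contradicts the optimality of $R^*$ and forces $\mathcal{N}=\emptyset$, i.e.\ $P \ll R^*$.

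The only delicate point is the rigorous handling of the $-\infty$ derivative: strictly speaking one should verify that the finite sum of the smooth terms does not create any issue (it is bounded because all factors $\log(R^*_{ij}/R_{ij})$ appearing are finite) and that the term $P_{ij}\log(tP_{ij}/R_{ij})$ for $(i,j)\in\mathcal{N}$ dominates for small $t$. This can be made precise by writing $H(R_t|R) = H(R^*|R) + t\,A + t\sum_{(i,j)\in\mathcal{N}}P_{ij}\log(tP_{ij}/R_{ij}) + o(t)$ for some finite $A$, and observing that the middle term goes to $-\infty$ faster than linearly, so that $H(R_t|R) < H(R^*|R)$ for all sufficiently small $t>0$.
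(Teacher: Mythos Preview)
Your proof is correct and is essentially identical to the paper's: both take the convex combination $R_t=(1-t)R^*+tP$ and exploit the infinite negative slope of $s\mapsto s\log s$ at $s=0$ to contradict optimality of $R^*$ unless $P\ll R^*$. The only quibble is the phrase ``the middle term goes to $-\infty$ faster than linearly'': in fact $t\sum_{\mathcal N}P_{ij}\log(tP_{ij}/R_{ij})\to 0$, but what matters (and what you already established) is that its ratio with $t$ tends to $-\infty$, so $H(R_t|R)-H(R^*|R)<0$ for small $t$.
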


\begin{proof}[Proof of Proposition~\ref{prop:R*_has_biggest_support}]
Let $P \in \Pi(\mu, \nu)$ be such that $P \ll R$. For all $\eps >0$, $P^\eps := (1-\eps)R^* + \eps P$ is a competitor for $\Sch(R; \mu, \nu)$, so that by minimality of $R^*$, $H(P^\eps | R) \geq H(R^* | R)$. But because $s \mapsto s \log s$ is decreasing with infinite slope near $s = 0$, this is possible for $\eps$ small only if $P \ll R^*$.
\end{proof}

 Let us now prove Lemma~\ref{lem:non_degenerate_case}.
\begin{proof}[Proof of Lemma~\ref{lem:non_degenerate_case}]
In Lemma~\ref{lem:non_degenerate_case}, we are looking for necessary and sufficient conditions for $\Sch(R; \mu,\nu)$ to be scalable. As $\M(\mu) = \M(\nu)$ is clearly a necessary condition because of Remark~\ref{rem:coupling_total_mass}, we assume once for all that it is true. Up to normalizing, we assume that $\mu \in \mathcal P(\DD)$ and $\nu \in \mathcal P(\FF)$.

In order to clarify what~\ref{item:cond_A'} and~\ref{item:cond_B'} mean, we start by considering the case where the reference matrix $R$ is such that the graph $(\DD \cup \FF, \triangle)$ is connected. In that case, recalling that $\mu$ and $\nu$ are assumed to be probability measures, the conditions~\ref{item:cond_A'} and~\ref{item:cond_B'} are equivalent to:
	\begin{enumerate}[label={(\alph*'')}, leftmargin=.1\textwidth]
		\item The measures $\mu$ and $\nu$ have full support, and for all $\emptyset \subsetneq A \subsetneq \DD$, $\mu(A) < \nu(F_R(A))$,
		\label{item:cond_A''}
		\item The measures $\mu$ and $\nu$ have full support, and for all $\emptyset \subsetneq B \subsetneq \FF$, $\nu(B) < \mu(D_R(B))$.
		\label{item:cond_B''}
	\end{enumerate} 
Indeed, it is easy to see that in the balanced and connected case, the only subsets $A$ of $\DD$ for which $\mu^R(A) = \nu^R(F_R(A))$ are $A = \emptyset$ and $A = \DD$. Similarly, the only subsets $B$ of $\FF$ for which $\nu^R(B) = \mu^R(D_R(B))$ are $B = \emptyset$ and $B = \FF$.  \\

We are going to prove Lemma~\ref{lem:non_degenerate_case} under this connectivity assumption. Passing to the general case is direct, up to restricting ourselves to connected components of $(\DD \cup \FF, \triangle)$. 

We only prove~\ref{item:cond_B''}$\Leftrightarrow$\ref{item:existence_solution'}, as~\ref{item:cond_A''}$\Leftrightarrow$\ref{item:existence_solution'} is proved in the same way. The idea of the proof is to fix $\mu \in \P(\DD)$ of full support, that is, such that for all $i=1, \dots, N$, $\mu_i>0$, and to introduce the two following subsets of $\P(\FF)$:
\begin{gather*}
\mathfrak A := \{ \nu \in \P(\FF) \, | \, \forall \emptyset \subsetneq B \subsetneq \DD, \, \nu(B) < \mu (D_R(B)) \mbox{ and } \forall j =1,\dots, M, \, \nu_j>0 \},\\
\mathfrak B := \{ \nu := \nu^{\bar R} \, | \, \bar R \sim R \mbox{ and } \mu^{\bar R}= \mu \}.
 \end{gather*}
 
 With these definitions, proving \ref{item:cond_B''}$\Leftrightarrow$\ref{item:existence_solution'} exactly means proving
 \begin{equation*}
      \mathfrak A = \mathfrak B,
 \end{equation*}
 and this is what we will prove now. To do so, we will first show that $\mathfrak B \subset \mathfrak A$, and then that $\mathfrak B$ is open and closed in $\mathfrak A$. As $\mathfrak A$ is convex, and hence connected, the result will follow.
 
 \bigskip
 
 \noindent \underline{Step 1}: $\mathfrak B \subset \mathfrak A$.

 Let us show that $\mathfrak B \subset \mathfrak A$. To this end, let us consider $\nu \in \mathfrak B$, and $\bar R$ such that $R \in \Pi(\mu, \nu)$ and $\bar R \sim R$. For all $\emptyset \subsetneq B \subsetneq \DD$ we have:
 \begin{equation*}
 \nu(B) = \sum\limits_{x_i \in \DD} \sum_{y_j \in B} \bar R_{ij} = \sum_{x_i \in D_R(B)} \sum_{y_j \in B} \bar R_{ij} \leq \sum_{x_i \in D_R(B)} \sum_{y_j \in \FF} \bar R_{ij} = \mu(D_R(B)),
 \end{equation*}
 and the equality holds only if for all $(x_i,y_j) \in D_R(B) \times B^c$, $\bar R_{ij} = 0$ and hence $R_{ij} = 0$. As by definition of $D_R(B)$, for all $(x_i,y_j) \in D_R(B)^c \times B$, we have $R_{ij} = 0$, an equality in the formula above would imply that $D_R(B) \times B$ is not connected to $D_R(B)^c \times B^c$ in $(\DD \times \FF, \triangle)$, which contradicts our connectivity assumption. We have thus a strict inequality. Finally, the full support of $\nu$ is also an consequence of the connectivity of $(\DD \times \FF, \triangle)$: For all $y_j \in \FF$, let $x_i \in \DD$ such that $R_{ij}>0$, and hence $\bar R_{ij}>0$. We have $\nu_j \geq \bar R_{ij}>0$. Thus, $\mathfrak B \subset \mathfrak A$.

 	\bigskip
 	
 \noindent \underline{Step 2}: $\mathfrak B$ is open in $\mathfrak A$.
 
 It is clear by its definition that $\mathfrak A$ is open in $\mathcal P(\FF)$. Therefore, to prove that $\mathfrak B$ is open in $\mathfrak A$, it suffices to prove that $\mathfrak B$ is open in $\mathcal P(\FF)$. Let $\nu \in \mathfrak B$, and $\bar R$ be such that $\bar R \sim R$ and $\nu^{\bar R} = \nu$. We choose:
\begin{align}
\label{def_epsilon}
    0< \eps <\min \{\bar R_{ij} \, | \, i,j \mbox{ s.t.\ } \bar R_{ij}>0\},
\end{align}
which is possible because $\DD$ and $\FF$ are finite. By convexity, it is enough to prove that for all $j \neq j'$ in $\{1, \dots, M\}$, $\nu + \eps (\delta_j - \delta_{j'}) \in \mathfrak B$. As $(\DD\cup\FF, \triangle)$ is connected, we can find $j=j_0, i_1, j_1, \dots, i_p, j_p = j'$ such that
 \begin{equation*}
 y_j = y_{j_0} \triangle x_{i_1} \triangle y_{j_1} \triangle \dots \triangle x_{i_p} \triangle y_{j_p} = y_{j'}.
 \end{equation*}
 Then we set
 \begin{equation*}
 P := \bar R + \eps \sum_{n=1}^p \Big( \delta_{i_n j_{n-1}} - \delta_{i_n j_n}\Big).
 \end{equation*}
 It is easy to check that $P \sim R$ (by the definition \eqref{def_epsilon} of $\eps$), that $\mu^P = \mu$, and that $\nu^P = \nu + \eps(\delta_j - \delta_{j'})$, which therefore belongs to $\mathfrak B$.
 
 \bigskip 
 
 \noindent\underline{Step 3}: $\mathfrak B$ is closed in $\mathfrak A$, strategy of the proof.
 
 Let us introduce the following subset of $\mathcal P(\FF)$:
 \begin{equation*}
     \mathfrak C := \{ \nu := \nu^{\bar R} \, | \, \bar R \ll R \mbox{ and } \mu^{\bar R} = \mu \}.
 \end{equation*}
 This set is clearly closed in $\mathcal P(\FF)$, so to prove that $\mathfrak B$ is closed in $\mathfrak A$, it suffices to prove that $\mathfrak B = \mathfrak A \cap \mathfrak C$. As we have already seen that $\mathfrak B \subset \mathfrak A$, and as clearly $\mathfrak B \subset \mathfrak C$, the only inclusion that needs to be justified is $\mathfrak A \cap \mathfrak C \subset \mathfrak B$. 
 
 Let us choose $\nu \in \mathfrak A \cap \mathfrak C$, and let us consider $R^*$ the solution of $\Sch(R;\mu,\nu)$. We will prove by contradiction that $R^* \sim R$, and hence that $\nu \in \mathfrak B$.

 So let us assume that $R^* \nsim R$. Once again, we choose $0< \eps <\min \{R^*_{ij} \, | \, i,j \mbox{ s.t.\ } R^*_{ij}>0\}$. For all $i,j$, we write $$x_i \blacktriangle y_j$$ whenever $R^*_{ij}>0$. We will first prove that $(\DD \cup \FF, \blacktriangle)$ is connected (this is the hardest part of the proof), and then that it coincides with $(\DD \cup \FF, \triangle)$, which exactly means that $R^* \sim R$.
 
 \bigskip
 
 \noindent\underline{Step 4}: $(\DD \cup \FF, \blacktriangle)$ is connected.
 
 We call $\DD_1 \cup \FF_1, \dots, \DD_p \cup \FF_p$ the connected components of $(\DD \cup \FF, \blacktriangle)$. Let us assume that $p>1$, and show that it leads to a contradiction. 
 
 First, we claim that if $p>1$, there exist $k_1, \dots, k_l\in\{1, \dots, p\}$ a family of two by two distinct indices, $x_{i^{k_1}}, \dots, x_{i^{k_l}} \in \DD$ and $y_{j^{k_1}}, \dots, y_{j^{k_l}} \in \FF$ such that for all $q = 1, \dots, l$, $x_{i^{k_q}} \in \DD_{k_q}$ and $y_{j^{k_q}} \in \FF_{k_q}$, and with the convention $l+1 = 1$, $y_{j^{k_q}} \triangle x_{i^{k_{q+1}}}$. 
 
 For proving this claim, we start by building a directed graph structure on $\{1, \dots, p\}$, the set of indices of the connected components of $(\DD \cup \FF, \blacktriangle)$. For all $k,k' \in \{1, \dots, p\}$, we write $k \leadsto k'$ whenever $k \neq k'$ and there exists $y_j \in \FF_k$ and $x_i \in \DD_{k'}$ such that $y_j \triangle x_i$. Of course, our claim precisely means that the directed graph $(\{1, \dots, p\}, \leadsto)$ admits a cycle. Let us prove that for all $k = 1, \dots, p$, there exists $k' \in \{1, \dots, p\}$ such that $k \leadsto k'$, which is clearly enough to conclude. 
 
 Let us consider $k \in \{1, \dots, p\}$. Because $\nu \in \mathfrak A$, we have $\mu(D_R(\FF_k)) > \nu( \FF_k )$. On the other hand, $\nu(\FF_k) = \mu(\DD_k)$ (as under $R^*$, all the mass on $\DD_k$ is sent to $\FF_k$ and \emph{vice versa}), and $\DD_k \subset D_R(\FF_k)$ (as $R^* \ll R$). Therefore, $\mu(D_R(\FF_k) \backslash \DD_k) = \mu(D_R(\FF_k)) - \mu(\DD_k) >0$, and we conclude that $D_R(\FF_k) \backslash \DD_k\neq \emptyset$. Let $x_i \in D_R(\FF_k) \backslash \DD_k$, and $k'$ such that $x_i \in \DD_{k'}$. As $x_i \in D_R(\FF_k)$, there is $y_j \in \FF_k$ such that $y_j \triangle x_i$. It follows that $k \leadsto k'$, and the claim is proved, and we can consider $k_1, \dots, k_l$ satisfying the properties above.\\
 
We now show that we reach a contradiction, which allows to conclude that actually, $p$ must be equal to $1$ and hence that $(\DD \cup \FF, \blacktriangle)$ needs to be connected.
 
For all $q=1, \dots, l$, as $(\DD_{k_q} \cup \FF_{k_q}, \blacktriangle)$ is connected, we can find a family of indices $i^{k_q} = i^{k_q}_1, j^{k_q}_1, \dots, i^{k_q}_{n_q}, j^{k_q}_{n_q} = j^{k_q}$ such that
 	\begin{equation*}
 x_{i^{k_q}} =  x_{i^{k_q}_1} \blacktriangle y_{j^{k_q}_1} \blacktriangle \dots \blacktriangle x_{i^{k_q}_{n_q}} \blacktriangle y_{j^{k_q}_{n_q}} = y_{j^{k_q}}
 \end{equation*}
 Now, still keeping the convention $l+1 = 1$, we set
 \begin{equation*}
 P:= R^* + \eps \sum_{q=1}^l \delta_{i^{k_{q+1}}j^{k_q}} - \eps \sum_{q=1}^l\left(  \sum_{n=1}^{n_q-1} \Big( \delta_{i^{k_q}_n j^{k_q}_n} - \delta_{i^{k_q}_{n+1} j^{k_q}_n} \Big) + \delta_{i^{k_q}_{n_q} j^{k_q}_{n_q}} \right).
 \end{equation*}
The matrix $P$ has less zeros than $R^*$: by the definition \eqref{def_epsilon} of $\eps$, it has no additional zero and we have for instance $P_{i^{k_2}j^{k_1}} > 0$ and $R^*_{i^{k_2}j^{k_1}} = 0$. Moreover, $P \in \Pi(\mu, \nu)$, and the construction of the indices ensures that $P$ has new non-zero entries w.r.t.\ $R^*$ only on $(x_i,y_j)$ such that $R_{ij} > 0$, which ensures that $P \ll R$. In virtue of Proposition~\ref{prop:R*_has_biggest_support}, this contradicts the fact that $R^*$ is the solution of $\Sch(R; \mu, \nu)$, and we conclude that $p=1$.
  
  \bigskip 
  
  \noindent \underline{Step 5}: $(\DD \cup \FF, \blacktriangle) = (\DD \cup \FF, \triangle)$.
  
   Our last task is to prove that whenever $x_i \triangle y_j$, for some $x_i \in \DD$ and $y_j \in \FF$, then we also have $x_i \blacktriangle y_j$ (the reciprocal statement follows from $R^* \ll R$). So let us consider $x_i \in \DD$ and $y_j \in \FF$ with $x_i \triangle y_j$, assume that we do not have $x_i \blacktriangle y_j$, and show that we reach a contradiction. As we know that $(\DD \cup \FF, \blacktriangle)$ is connected., we can find
  $i=i_1, j_1,i_2, \dots, i_p, j_p = j'$ such that
  \begin{equation*}
  x_i =  x_{i_1} \blacktriangle y_{j_1} \blacktriangle x_{i_2} \dots \blacktriangle x_{i_p} \blacktriangle y_{j_p} = y_{j}.
  \end{equation*}
  We set
  \begin{equation*}
  P := R^* + \eps \delta_{ij} - \eps \left(\sum_{n=1}^{p-1} \Big( \delta_{i_n j_{n}} - \delta_{i_{n+1} j_n}\Big) + \delta_{i_p j_p}\right).
  \end{equation*}
  Once again, $P$ has less zeros than $R^*$, which is a contradiction, and the result follows.
\end{proof}
 
 We are now ready to conclude the proof of Theorem~\ref{thm:CNS}.
 
 \begin{proof}[Proof of Theorem~\ref{thm:CNS}]
It remains to show that if $\mu$ and $\nu$ are such that~\ref{item:cond_A} is verified (and not~\ref{item:cond_A'}), then the problem is approximately scalable (once again, \ref{item:cond_B}$\Rightarrow$\ref{item:existence_solution} is proved in the same way, and \ref{item:existence_solution}$\Rightarrow$\ref{item:cond_A} and \ref{item:existence_solution}$\Rightarrow$\ref{item:cond_B} are easy).\\

Let us first remark that the problem $\Sch(R;\mu^R,\nu^R)$ is obviously scalable. Let us consider $\eps\in (0,1)$. We define:
 \begin{equation*}
     \mu^\eps := (1-\eps)\mu + \eps \mu^R \quad \mbox{and} \quad \nu^\eps=(1-\eps)\nu + \eps \nu^R.
 \end{equation*}
The condition~\ref{item:cond_A} implies that for all $A \subset \DD$:
\begin{equation*}
\mu^\eps (A) \leq (1-\eps)\nu(F_R(A)) + \eps \nu^R(F_R(A))= \nu^\eps(A).
\end{equation*}
with a strict inequality whenever $\mu^R(A) < \nu^R(F_R(A))$.\\

Let us now assume that Assumption~\ref{ass:full_support} holds. In this case, in virtue of Lemma~\ref{lem:non_degenerate_case}, the problem $\Sch(R;\mu^\eps,\nu^\eps)$ is scalable. In particular, there exists $R^\eps \in \Pi(\mu^\eps,\nu^\eps)$ such that $R^\eps$ and $R$ have the same support. As the family $(R^\eps)$ has value in the compact set 
\begin{equation*}
\{R' \in \mathcal M_{+}(\DD \times \FF) \, | \, \mathsf M (R') \leq \max(\mathsf M(\mu), \mathsf M(R))\},
\end{equation*}
we can chose one of its limit points $\bar R$ as $\eps \to 0$. Obviously, $\bar R \ll R$ and $\bar R \in \Pi(\mu,\nu)$ so that $\Sch(R;\mu,\nu)$ is approximately scalable.\\

It remains to prove that~\ref{item:cond_A}$\Rightarrow$\ref{item:existence_solution} even when Assumption~\ref{ass:full_support} does not hold. To do so, we claim that under~\ref{item:cond_A}, assuming Assumption~\ref{ass:full_support} is not restrictive. The reason is that~\ref{item:cond_A} implies Assumption~\ref{ass:condition_sinkhorn_well_defined}, and hence Assumption~\ref{ass:full_support} up to restricting the problem to the supports of $\mu$ and~$\nu$, as explained in Subsection~\ref{subsec:ass}. 

So let us prove that~\ref{item:cond_A} implies Assumption~\ref{ass:condition_sinkhorn_well_defined}. We suppose that \ref{item:cond_A} holds, and we consider $\mathcal E$ and $R^0$ as defined in Assumption~\ref{ass:condition_sinkhorn_well_defined}. 

Let us show that $\mu \ll \mu^{R^0}$. Let $x_i \in \DD$ be such that $\mu_i>0$, and let us show that $\mu^{R^0}_i>0$. By~\ref{item:cond_A}, $\nu(F_R(\{x_i\})\geq \mu_i>0$. Therefore, $F_R(\{x_i\})$ is nonempty, and there exists $y_j \in F_R(\{x_i\})$ such that $\nu_j>0$. This pair $(x_i,y_j)$ belongs to $\mathcal E$, so $R^0_{ij}>0$, and then $\mu^{R^0}_i>0$.

Let us show that $\nu \ll \nu^{R^0}$. Let $y_j \in \FF$ be such that $\nu_j>0$, and let us show that $\nu^{R^0}_j>0$. Let us call $\DD'$ the support of $\mu$. By~\ref{item:cond_A}, $\mathsf M(\nu) \geq \nu(F(\DD')) \geq\mu(\DD') = \M(\mu)$. But as $\M(\nu) = \M(\mu)$, we conclude that $\nu(F(\DD')) = \M(\nu)$, and so in particular that $y_j \in F(\DD')$. So there exists $x_i \in \DD'$ such that $R_{ij}>0$. This pair $(x_i,y_j)$ belongs to $\mathcal E$, so $R^0_{ij}>0$, and then $\nu^{R^0}_j>0$.
\end{proof}
\end{appendices}

\bibliography{biblio}
\bibliographystyle{plain}
\end{document}